\newcommand{\F}{\mathcal{F}}
\newcommand{\Pp} {\mathcal{P}}
\newcommand{\E}{\mathbb{E}}
\newcommand{\R}{\mathbb{R}}
\newcommand{\eps}{\varepsilon}
\newcommand{\1}{\mathds{1}}
\newcommand{\var} {\ensuremath {\textnormal{Var}}}
\newcommand{\supp}{\text{supp}}
\renewcommand{\P}{\mathbb{P}}
\newtheorem{theorem}{Theorem}
\newtheorem{remark}{Remark}
\newtheorem{lemma}{Lemma}
\newtheorem{corollary}{Corollary}
\newtheorem{proposition}{Proposition}
\begin{document}
\title{A Bayesian nonparametric approach to log-concave density estimation}
\author{Ester Mariucci\footnote{Institut f\"ur Mathematik, Universität Potsdam. 
E-mail: \href{mailto:mariucci@uni-potsdam.de}{mariucci@uni-potsdam.de}.}, \ Kolyan Ray\thanks{Mathematical Institute, Leiden University. \newline E-mail: \href{mailto:k.m.ray@math.leidenuniv.nl}{k.m.ray@math.leidenuniv.nl}, \href{mailto:b.t.szabo@math.leidenuniv.nl}{b.t.szabo@math.leidenuniv.nl}.} \ and Botond Szab\'o\footnotemark[2]}

\date{}

\maketitle

\begin{abstract}
The estimation of a log-concave density on $\mathbb{R}$ is a canonical problem in the area of shape-constrained nonparametric inference. We present a Bayesian nonparametric approach to this problem based on an exponentiated Dirichlet process mixture prior and show that the posterior distribution converges to the log-concave truth at the (near-) minimax rate in Hellinger distance. Our proof proceeds by establishing a general contraction result based on the log-concave maximum likelihood estimator that prevents the need for further metric entropy calculations. We further present computationally more feasible approximations and both an empirical and hierarchical Bayes approach. All priors are illustrated numerically via simulations.
\end{abstract}
\textit{AMS 2000 subject classifications}: {\small 62G07, 62G20.}\\
\textit{Keywords}: {\small Density estimation, log-concavity, Dirichlet mixture, posterior distribution, convergence rate, nonparametric hypothesis testing.}

\section{Introduction}

Nonparametric shape constraints offer practitioners considerable modelling flexibility by providing infinite-dimensional families that cover a wide range of parameters whilst also including numerous common parametric families. Log-concave densities on $\R$, that is densities whose logarithm is a concave function taking values in $[-\infty,\infty)$, constitute a particularly important shape-constrained class. This class includes many well-known parametric densities that are frequently used in statistical modelling, including the Gaussian, uniform, Laplace, Gumbel, logistic, gamma distributions with shape parameter at least one, Beta$(\alpha,\beta)$ distributions with $\alpha,\beta \geq 1$ and Weibull distributions with parameter at least one.

One of the original statistical motivations for considering log-concave density estimation was the problem of estimating a unimodal density with unknown mode. While this is a natural constraint in many applications, the nonparametric MLE over this class does not exist \cite{birge1997}. Since the class of log-concave densities equals the class of strongly unimodal densities \cite{ibragimov1956}, Walther \cite{walther2002} argues that this class provides a natural alternative to the full set of all unimodal densities. The class of log-concave densities also preserves many of the attractive properties of Gaussian distributions, such as closure under convolution, marginalization, conditioning and taking products. One can therefore view log-concave densities as a natural infinite-dimensional surrogate for Gaussians that retain many of their important features yet allow substantially more freedom, such as heavier tails. For these reasons, estimation of log-concave densities has received significant attention in recent years, particularly concerning the performance of the log-concave MLE  \cite{dumbgen2009,seregin2010,cule2010,cule2010b,dumbgen2011,doss2016,kim2016,kim2016b}.

Outside density estimation, log-concavity as a modelling assumption has found applications in many statistical problems, such as mixture models \cite{walther2002,balabdaoui2017}, tail index estimation \cite{muller2009}, clustering \cite{cule2010b}, regression \cite{dumbgen2011} and independent component analysis \cite{samworth2012}. For general reviews of inference with log-concave distributions and estimation under shape constraints, see \cite{walther2010} and \cite{groeneboom2014} respectively.

The Bayesian approach provides a natural way to encode shape constraints via the prior distribution, for instance under monotonicity \cite{shively2009,khazaei2010,salomond2014,reiss2018} or convexity contraints \cite{shively2011,hannah2011,han2017}. We present here a Bayesian nonparametric method for log-concave density estimation on $\R$ based on an exponentiated Dirichlet process mixture prior, which we show converges to a log-concave truth in Hellinger distance at the (near-)minimax rate. To the best of our knowledge, this is the first Bayesian nonparametric approach to this problem. We also study two computationally motivated approximations to the full Dirichlet process mixture based on standard Dirichlet process approximations, namely the Dirichlet multinomial distribution and truncating the stick-breaking representation (see Chapter 4.3.3 of \cite{vandervaartbook2017}). We further propose both an empirical and hierarchical Bayes approach that have clear practical advantages, while behaving similarly to the above in simulations. All of these priors are easily implementable using a random walk Metropolis-Hastings within Gibbs sampling algorithm, which we illustrate in Section \ref{sec:simulation}.

An advantage of the Bayesian method is that point estimates and credible sets can be approximately computed as soon as one is able to sample from the posterior distribution. In particular, the posterior yields easy access to statements on Bayesian uncertainty quantification as we show numerically in Section \ref{sec:simulation}. Our numerical results suggest that pointwise credible sets have reasonable coverage at moderate sample sizes.

The Bayesian approach also permits inference about multiple quantities, such as functionals, in a unified way using the posterior distribution. A particular functional of interest is the mode of a log-concave density. While the pointwise limiting distribution of the log-concave MLE is known \cite{balabdaoui2009}, it depends in a complicated way on the unknown density making it difficult to use to construct a confidence interval for the mode. An alternative approach to constructing a confidence interval based on comparing the log-concave MLE with the mode constrained MLE has recently been proposed \cite{doss2016a}. For the Bayesian, the marginal posterior of the mode provides a natural approach to both estimation and uncertainty quantification. Indeed, it is easy to construct Bayesian credible intervals as we demonstrate numerically in Section \ref{sec:simulation}. Whether such an approach is theoretically justified from a frequentist perspective is a subtle question related to the semiparametric Bernstein-von Mises phenomenon (Chapter 12 of \cite{vandervaartbook2017}) that is, however, beyond the scope of this article. We also note that other constraints, such as a known mode \cite{doss2016b}, can similarly be enforced through suitable prior calibration.

Given the good performance of the log-concave MLE, one might expect that Bayesian procedures, being driven by the likelihood, behave similarly well. This is indeed the case, as we show below. Our proof relies on the classic testing approach of Ghosal et al. \cite{ghosal2000} with interesting modifications in the log-concave setting. The existence and optimality of the MLE in Hellinger distance is closely linked to a uniform control of bracketing entropy \cite{vandegeer2000}. In our setting, one can exploit the affine equivariance of the log-concave MLE (Remark 2.4 of \cite{dumbgen2011}) to circumvent the need to control the metric entropy of the whole space by reducing the problem to studying a subset satisfying restrictions on the first two moments of the underlying density. This is a substantial reduction, since obtaining sharp entropy bounds in even this reduced case is highly technical, see Theorem 4 of Kim and Samworth \cite{kim2016}. One can then use the MLE to construct suitable plug-in tests with exponentially decaying type-II errors as in Gin\'e and Nickl \cite{gine2011} that take full advantage of the extra structure of the problem compared to the standard Le Cam-Birg\'e testing theory for the Hellinger distance \cite{lecam1986}. Indeed, a naive attempt to control the entropy directly, as is standard in the Bayesian nonparametrics literature (e.g. \cite{ghosal2000}), results in an overly small set on which the prior must place most of its mass. This leads to unnecessary restrictions on the prior, which in particular are not satisfied by the priors we consider in Section \ref{sec:main results}, see Remark \ref{rem:entropy}. Beyond this, there remain significant technical hurdles to proving that the prior places sufficient mass in a Kullback-Leibler neighbourhood of the truth, in particular related to the approximation of log-concave densities using piecewise log-linear functions with suitably spaced knots.

The paper is structured as follows. In Section \ref{sec:main results} we introduce our priors and present our main results, both on general contraction for log-concave densities and for the specific priors considered here. In Section \ref{sec:simulation} we present a simulation study, including a more practical empirical Bayes implementation, with some discussion in Section \ref{sec:discussion}. In Section \ref{sec:main proofs} we present the proofs of the main results with technical results placed in Section \ref{sec:technical proofs}. Some additional simulations are found in Section \ref{sec:extra_sim}.

\textit{Notation}: For two probability densities $p$ and $q$ with respect to Lebesgue measure $\lambda$ on $\R$, we write $h^2(p,q) = \int (\sqrt{p} - \sqrt{q})^2$ for the squared Hellinger distance, $K(p,q) = \int p \log \tfrac{p}{q}$ for the Kullback-Leibler divergence and $V=\int p (\log \tfrac{p}{q})^2$. We denote by $P_{f_0}^n$ the product probability measure corresponding to the joint distribution of i.i.d random variables $X_1,\dots,X_n$ with density $f_0$ and write $P_{f_0} = P_{f_0}^1$. For a function $w$, we denote by $w_-'$ and $w_+'$ its left and right derivatives respectively, that is
$$ w_-'(x)=\lim_{s\nearrow x} w'(s)\quad \textnormal{and}\quad w_+'(x)=\lim_{s\searrow x} w'(s).$$
Let $\R^+ = [0,\infty)$ and for two real numbers $a,b$, let $a\wedge b$ and $a \vee b$ denote the minimum and maximum of $a$ and $b$ respectively. Finally, the symbols $\lesssim$ and $\gtrsim$ stand for an inequality up to a constant multiple, where the constant is universal or (at least) unimportant for our purposes.

\section{Main Results}\label{sec:main results}

Consider i.i.d. density estimation, where we observe $X_1,...,X_n \sim f_0$ with $f_0=e^{w_0}$ an unknown log-concave density to be estimated. Let $\F$ denote the class of upper semi-continuous log-concave probability densities on $\R$. For $\alpha>0$ and $\beta\in \R$, denote
\begin{align*}
\F_{\alpha,\beta} := \{ f \in \F :  f(x) \leq e^{\beta-\alpha |x|} \,\, \forall x\in \R \}.
\end{align*}
By Lemma 1 of Cule and Samworth \cite{cule2010}, for any log-concave density $f_0$ there exist constants $\alpha_{f_0}>0$ and $\beta_{f_0}\in\R$ such that $f_0(x) \leq e^{\beta_{f_0}-\alpha_{f_0}|x|}$ for all $x\in \R$. Consequently, any upper semi-continuous log-concave density $f_0$ belongs to $\F_{\alpha,\beta}$ for $0<\alpha\leq \alpha_{f_0}$ and $\beta\geq \beta_{f_0}$. 

We establish a general posterior contraction theorem for priors on log-concave densities using the general testing approach introduced in \cite{ghosal2000}, which requires the construction of suitable tests with exponentially decaying type-II errors. We construct plug-in tests based on the concentration properties of the log-concave MLE, similar to the linear estimators considered in \cite{gine2011,ray2013}. The MLE has been shown to converge to the truth at the minimax rate in Hellinger distance in Kim and Samworth \cite{kim2016} and the following theorem relies heavily on their result. 

\begin{theorem}\label{thm:general contraction}
Let $\F$ denote the set of upper semi-continuous, log-concave probability densities on $\R$ and let $\Pi_n$ be a sequence of priors supported on $\F$. Consider a sequence $\varepsilon_n \rightarrow 0$ such that $n^{-2/5}\lesssim \varepsilon_n\lesssim n^{-3/8-\rho}$ for some $\rho>0$ and suppose there exists a constant $C>0$ such that
\begin{equation}\label{eq:small ball}
\Pi_n \Big( f\in \F: \int_\R f_0 \Big( \log \frac{f_0}{f} \Big) \leq \varepsilon_n^2 , \quad \int_\R f_0 \Big( \log \frac{f_0}{f} \Big)^2 \leq \varepsilon_n^2  \Big) \geq \exp (-Cn\varepsilon_n^2).
\end{equation}
Then for sufficiently large $M$,
\begin{equation*}
\Pi_n (f\in \F: h(f,f_0) \geq M \varepsilon_n|X_1,...,X_n) \rightarrow 0
\end{equation*}
in $P_{f_0}^n$-probability as $n\to \infty$.
\end{theorem}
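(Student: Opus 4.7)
The plan is to apply the standard testing framework of Ghosal, Ghosh and van der Vaart \cite{ghosal2000}, tailored to the log-concave setting via a plug-in test based on the MLE. Starting from the usual decomposition
$$\Pi_n\big(\{f \in \F: h(f,f_0) \geq M\varepsilon_n\} \,\big|\, X_1,\ldots,X_n\big) \leq \phi_n + \frac{\int_{\{h(f,f_0) \geq M\varepsilon_n\}} (1-\phi_n) \prod_{i=1}^n \tfrac{f(X_i)}{f_0(X_i)} \, d\Pi_n(f)}{\int_\F \prod_{i=1}^n \tfrac{f(X_i)}{f_0(X_i)} \, d\Pi_n(f)}$$
valid for any test $\phi_n$, the small-ball hypothesis \eqref{eq:small ball} together with Lemma~8.1 of \cite{ghosal2000} lower-bounds the denominator by $e^{-(C+2)n\varepsilon_n^2}$, outside a set of $P_{f_0}^n$-probability $o(1)$. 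It thus remains to produce $\phi_n$ with $E_{f_0}\phi_n \to 0$ and $E_f(1-\phi_n) \leq e^{-(C+3)n\varepsilon_n^2}$ uniformly over the alternative, after which Fubini closes the argument.

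Following Gin\'e and Nickl \cite{gine2011}, I would take the plug-in test
$$\phi_n = \1\{h(\hat f_n, f_0) > M\varepsilon_n/2\},$$
where $\hat f_n$ denotes the log-concave MLE. The type-I error $P_{f_0}(h(\hat f_n, f_0) > M\varepsilon_n/2)$ goes to zero by the Hellinger consistency of the MLE at the minimax rate $n^{-2/5}$ established by Kim and Samworth \cite{kim2016}, for $M$ sufficiently large (using $\varepsilon_n \gtrsim n^{-2/5}$). For the type-II error, the triangle inequality gives, for any $f$ with $h(f,f_0)\geq M\varepsilon_n$,
$$P_f(\phi_n = 0) \;\leq\; P_f\big(h(\hat f_n, f) \geq M\varepsilon_n/2\big),$$
so the core requirement is a uniform exponential tail bound on $h(\hat f_n,f)$ under each $P_f$.

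Such exponential tail bounds come from classical MLE empirical process theory (Wong--Shen style, or Theorem~7.4 of van de Geer \cite{vandegeer2000}) combined with the sharp bracketing entropy estimates for log-concave densities in Theorem~4 of Kim--Samworth \cite{kim2016}: on a subclass with controlled bracketing entropy one obtains $P_f(h(\hat f_n, f) \geq t) \leq c_1 e^{-c_2 n t^2}$ for $t \gtrsim n^{-2/5}$. The upper restriction $\varepsilon_n \lesssim n^{-3/8-\rho}$ is then what allows the constants in these exponential inequalities to be absorbed into the required $e^{-(C+3)n\varepsilon_n^2}$ type-II bound.

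The principal obstacle is that these bracketing entropy constants, and therefore the exponential concentration of $\hat f_n$, depend in an uncontrolled way on the tail decay of $f$: an unrestricted log-concave alternative in $\F$ may have arbitrarily slow exponential tails. This is resolved by exploiting the affine equivariance of the log-concave MLE (Remark~2.4 of \cite{dumbgen2011}): affine transformations commute with the MLE and leave the Hellinger distance invariant, so one may standardise each alternative to the subclass of log-concave densities with fixed (say zero mean and unit variance) first two moments, on which the Kim--Samworth bracketing bounds apply uniformly. The resulting uniform exponential concentration furnishes the type-II control, and combined with the prior mass bound from \eqref{eq:small ball} delivers the claimed posterior contraction in $P_{f_0}^n$-probability.
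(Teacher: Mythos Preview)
Your approach is exactly that of the paper: plug-in test based on the log-concave MLE, affine equivariance to reduce to the zero-mean unit-variance subclass, and then the Kim--Samworth concentration machinery. One point deserves sharpening, however. The uniform exponential bound you invoke does not take the clean form $P_f(h(\hat f_n,f)\geq t)\leq c_1 e^{-c_2 nt^2}$; rather, the paper's Lemma~\ref{lem:mle_exp_ineq} (a refinement of the argument in \cite{kim2016}) shows that for $g_0\in\overline{\F}^{0,1}$ one has
\[
P_{g_0}^n\big(h(\hat g_n,g_0)\geq Ln^{-2/5}\big)\;\leq\; C(\eps)e^{-c(\eps)n^{1/(4+2\eps)}}+De^{-dL^2 n^{1/5}}.
\]
The second term is indeed of the form $e^{-cnt^2}$ with $t=Ln^{-2/5}$, but the first term is a ``leakage'' contribution---arising from the probability that $\hat g_n$ fails to lie in the moment-restricted class $\widetilde{\F}^{1,\eta}$---that does not improve with $L$. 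For the type-II bound one needs this leakage term to be $o(e^{-(C+4)n\varepsilon_n^2})$, i.e.\ $n\varepsilon_n^2=o(n^{1/(4+2\eps)})$, and this is precisely the origin of the hypothesis $\varepsilon_n\lesssim n^{-3/8-\rho}$ (take $\eps>0$ small depending on $\rho$). So the upper restriction is not about absorbing multiplicative constants in a clean $e^{-cnt^2}$ bound; if such a bound held, no upper restriction on $\varepsilon_n$ would be needed at all.
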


The upper bound $\varepsilon_n \lesssim n^{-3/8 - \rho}$ is an artefact of the proof arising from the exponential inequality for the log-concave MLE that we use to construct our tests, see Lemma \ref{lem:mle_exp_ineq}. Since our interest lies in obtaining the optimal rate $n^{-2/5}$, possibly up to logarithmic factors, it plays no further role in our results. It is typical in Bayesian nonparametrics to require metric entropy conditions, which come from piecing together tests for Hellinger balls into tests for the complements of balls, see for instance Theorem 7.1 of \cite{ghosal2000}. The lack of such a condition in Theorem \ref{thm:general contraction} is tied to the optimality and specific structure of the log-concave MLE. Using the affine equivariance of the MLE (Remark 2.4 of \cite{dumbgen2011}), one can reduce the testing problem to considering alternatives in the class $\F$ restricted to have zero mean and unit variance. Unlike the whole space $\F$, the bracketing Hellinger entropy of this latter set can be suitably controlled, thereby avoiding the need for additional entropy bounds.


\begin{remark}\label{rem:entropy}
Obtaining sharp entropy bounds for log-concave function classes is a highly technical task and such bounds are only available for certain restricted subsets. Even in the case of mean and variance restrictions (Theorem 4 of \cite{kim2016}) and compactly supported and bounded densities (Proposition 14 of \cite{kim2016b}), the proofs are lengthy and require substantial effort. To use such bounds for the classic entropy-based approach to prove posterior contraction would therefore require the prior to place most of its mass on the above types of restricted sets. For instance, the prior might be required to place all but exponentially small probability on $\F_{\alpha,\beta}$ for some given $\alpha>0$, $\beta\in \R$. Such a prior construction is undesirable in practice and in fact none of our proposed priors satisfy such a restriction.
\end{remark}

We now introduce a prior on log-concave densities based on an exponentiated Dirichlet process mixture. For any measurable function $w:\R \rightarrow \R$, define the density
\begin{align}\label{eq:density_transform}
f_w(x) = \frac{e^{w(x)}}{\int_\R e^{w(y)}dy},
\end{align}
which is well-defined if $\int_\R e^{w(y)}dy<\infty$.
Recall that any monotone non-increasing probability density on $\R^+$ has a mixture representation \cite{williamson1956}
\begin{equation*}\label{eq:f}
f(x) = \int_x^\infty \frac{1}{u} dP(u),
\end{equation*}
where $P$ is a probability measure on $\R^+$. Khazaei and Rousseau \cite{khazaei2010} and Salomond \cite{salomond2014} used the above representation to obtain a Bayesian nonparametric prior for monotone non-increasing densities. Unfortunately, such a convenient mixture representation is unavailable for log-concave densities and so the prior construction is somewhat more involved. Integrating the right-hand side of the last display, we obtain a function $w:\R^+ \rightarrow \R$ as follows:
\begin{equation*}
w(x) = \gamma_1 \int_0^{\infty} \frac{u \wedge x}{u} dP(u) - \gamma_2 x,
\end{equation*}
where $\gamma_1>0$, $\gamma_2\in \R$ and $P$ is a probability measure on $[0,\infty)$. Since its (left and right) derivative is monotone decreasing, $w$ is concave. While not every concave function can be represented in this way, any log-concave density on $[0,\infty)$ can be approximated arbitrary well in Hellinger distance by a function of the form $e^w/(\int e^w)$, where $w$ is as above with $P$ a discrete probability measure, see Proposition \ref{prop:piecewise_approx}. Translating the above thus gives a natural representation for a prior construction for log-concave densities on $\R$.

Consider therefore the following possibly $n$-dependent prior on the log-density $w:[a_n,b_n] \rightarrow \R$, where possibly $a_n \rightarrow -\infty$ and $b_n \rightarrow \infty$:
\begin{align}
W(x) = \gamma_1 \int_0^{b_n-a_n} \frac{u \wedge (x-a_n)}{u} dP(u) - \gamma_2 (x-a_n),
\label{eq:prior_mixture_rep}
\end{align}
where
\begin{itemize}
\item $P \sim DP(H\1_{[0,b_n-a_n]})$, the Dirichlet process with base measure $H\1_{[0,b_n-a_n]}=H(\R^+)\bar{H} \1_{[0,b_n-a_n]}$, where $0<H(\R^+)<\infty$, $\bar{H}$ is a probability measure on $\R^+$ and every subset $U\subset [0,b_n-a_n]$ satisfies $H(U)\gtrsim \lambda(U) /(b_n-a_n)^\eta$ for some $\eta\geq 0$, 
\item $\gamma_i \sim p_{\gamma_i}$, $i=1,2,$ where $p_{\gamma_1}$, $p_{\gamma_2}$ are probability densities on $[0,\infty)$ and $\mathbb{R}$ respectively, satisfying $p_{\gamma_i}(|x|)\gtrsim e^{-c_i x^{1/4}}$, $c_i>0$, for all $x\in[0,\infty)$ and $x\in\mathbb{R}$ respectively,
\item $\gamma_1$, $\gamma_2$, and $P$ are independent.
\end{itemize}
We denote by $\Pi_n$ the full prior induced by $f_W$, where $W$ is drawn as above. Some typical draws from the prior are plotted in Figure \ref{fig: prior}.

\begin{figure}[h!]
\begin{center}
\includegraphics[width = 0.6\textwidth]{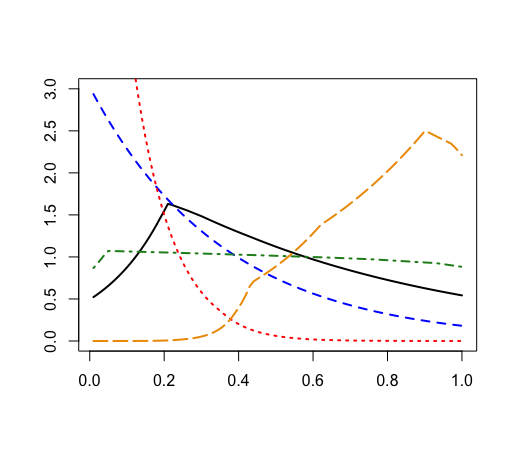} 
\caption{Prior draws with $[a_n,b_n]=[0,1]$, $\gamma_1\sim \text{Cauchy}_+(0,1)$, $\gamma_2\sim \text{Cauchy}(0,1)$, $H=U(0,1)$ and using the stick breaking construction.}
\label{fig: prior}
\end{center}
\end{figure}

\begin{remark}
If $(b_n-a_n)$ grows polynomially in $n$, then $H$ must have polynomial tails. On the other hand, if $(b_n-a_n)$ grows more slowly than any polynomial, one can relax this condition. For instance, if $H$ has a density $h$ with respect to the Lebesgue measure, then it is sufficient that $\min_{t\in[0,b_n-a_n]} h(t) \gtrsim n^{-\lambda}$ for some $\lambda >0$. In particular, if $(b_n-a_n) \lesssim \log n$, then $h$ may have exponential tails.
\end{remark}

We comment on several aspects of our prior. Firstly, since Dirichlet process draws are atomic with probability one, the prior draws \eqref{eq:prior_mixture_rep} will be piecewise linear and concave. Moreover, we could add any concave function to \eqref{eq:prior_mixture_rep}, such as an $-\gamma_3 x^2$-type term, and still have a suitable concave prior. This permits greater modelling flexibility but complicates computation. In any case, the prior described above gives optimal contraction rates and can be computed in practice, so we restrict our attention to it. Another point to note is that if $(b_n-a_n) \rightarrow \infty$ and $H$ is supported on the whole of $\R^+$, then the Dirichlet process base measure has total mass $H(\R^+) \bar{H}([a_n,b_n])\leq H(\R^+)$ for fixed $n$. This has the interpretation of assigning the prior more weight as $n \rightarrow \infty$, up to the full prior weight $H(\R^+)$. An alternative would be to re-weight the base measure to have full mass $H(\R^+)$ to give it equal weight for all $n$. This plays no role asymptotically and so we restrict to the first case for technical convenience.

A potentially more serious issue is that for fixed $n$, the support of the prior draws may not contain the support of the true density $f_0$, in which case observations outside $[a_n,b_n]$ cause the likelihood to be identically zero. While this is not a problem for $n$ large enough if $-a_n,b_n \rightarrow \infty$ fast enough (see Theorem \ref{thm: contraction}), it can be an issue for finite $n$. In practice, if one has an idea of the support of $f_0$, it is enough to select $[a_n,b_n]$ large enough to contain $\supp(f_0)$. A more pragmatic solution is to use an empirical Bayes approach and make the prior data-dependent by setting $a_n := X_{(1)}$, $b_n :=X_{(n)}$ the first and last order statistics. This ensures that the likelihood is never zero and the posterior is always well-defined. Indeed, the MLE is supported on $[X_{(1)},X_{(n)}]$ and so this can be thought of as plugging-in an estimate of the approximate support based on the likelihood. Moreover, since this approach yields the smallest support $[a_n,b_n]$ with non-zero likelihood, it also brings computational advantages. In particular, it can prevent the need to simulate the posterior distribution on potentially very large regions of $\R$ where the posterior draws are essentially indistinguishable from zero. The empirical Bayes method behaves very similarly to the prior \eqref{eq:prior_mixture_rep} in simulations and we would advocate this approach in practice. 

We first present a contraction result when the true density $f_0$ has known compact support.

\begin{theorem}\label{thm: contraction_compact}
Let $f_0\in \F_{\alpha,\beta}$ for some $\alpha>0$, $\beta\in\R$ and suppose further that $f_0$ is compactly supported. Let $a_n \equiv a$ and $b_n \equiv b$ for all $n$ and denote by $\Pi_n = \Pi$ the prior described above. If $\supp(f_0) \subset [a,b]$, then
\begin{equation*}
\Pi (f: h(f,f_0) \geq M (\log n)n^{-2/5} \mid X_1,...,X_n) \rightarrow 0
\end{equation*}
in $P_{f_0}^n$-probability for some $M=M(\alpha,\beta)>0$.
\end{theorem}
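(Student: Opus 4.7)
The plan is to deduce the theorem from Theorem~\ref{thm:general contraction} applied with $\varepsilon_n = M_0(\log n) n^{-2/5}$ for a sufficiently large constant $M_0 = M_0(\alpha,\beta)$. This rate satisfies $n^{-2/5} \lesssim \varepsilon_n \lesssim n^{-3/8-\rho}$ for any small $\rho > 0$, so only the prior-mass condition \eqref{eq:small ball} remains to be checked.

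\textbf{Construction and approximation.} First, invoke Proposition~\ref{prop:piecewise_approx} (after translating from $\R^+$ to $[a,b]$) to build a target log-density of the form \eqref{eq:prior_mixture_rep} with parameters $\gamma_1^* \in [0, C_1]$, $\gamma_2^* \in [-C_2, C_2]$ (constants depending only on $\alpha,\beta$) and a discrete probability measure $P^*$ on $[0, b-a]$ supported on $K_n$ atoms $u_1^*, \dots, u_{K_n}^*$, where $K_n$ is polynomial in $n$, such that $f_{w^*}$ is supported on $[a,b]$ and $h^2(f_0, f_{w^*}) \lesssim n^{-4/5}$. Using the compact support and log-concavity of $f_0$ and $f_{w^*}$, the construction can further be arranged so that $\|\log(f_0/f_{w^*})\|_\infty \lesssim \log n$. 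The standard Hellinger-to-KL conversion (e.g.\ Lemma~8.3 of \cite{ghosal2000}) then yields
\[
K(f_0, f_{w^*}) + V(f_0, f_{w^*}) \lesssim h^2(f_0, f_{w^*})(1 + \log n)^2 \lesssim \varepsilon_n^2.
\]

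\textbf{Prior mass near $(\gamma_1^*, \gamma_2^*, P^*)$.} A direct Lipschitz estimate on the functional \eqref{eq:prior_mixture_rep} shows that, if $|\gamma_i - \gamma_i^*| \le \delta_n$ for $i=1,2$ and the Dirichlet draw $P$ assigns mass within $\delta_n/K_n$ of $\pi_k^* := P^*(\{u_k^*\})$ to a small interval of radius $\delta_n/K_n$ around each atom $u_k^*$, then $\|w - w^*\|_\infty \lesssim \delta_n$ on $[a,b]$; consequently $K(f_0, f_w) + V(f_0, f_w) \lesssim \varepsilon_n^2$ as soon as $\delta_n \lesssim \varepsilon_n$. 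The sub-exponential lower bounds assumed on $p_{\gamma_1}$ and $p_{\gamma_2}$ yield a $\gamma$-marginal prior mass of order at least $\delta_n^2$, up to a constant. For the Dirichlet-process piece, the standard small-ball lemma (e.g.\ Lemma~G.13 of \cite{vandervaartbook2017}), combined with the assumption $H(U) \gtrsim \lambda(U)/(b-a)^\eta$, gives a lower bound of order $\exp\{-c K_n \log(K_n/\delta_n)\}$. Since both $K_n$ and $1/\delta_n$ are polynomial in $n$, this is at least $\exp(-c n^{1/5} \log n)$, which dominates the required $\exp(-C n \varepsilon_n^2) = \exp(-C M_0^2 (\log n)^2 n^{1/5})$ for $n$ large.

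\textbf{Main obstacle.} The delicate ingredient is the uniform bound $\|\log(f_0/f_{w^*})\|_\infty \lesssim \log n$, because a compactly-supported log-concave $f_0$ may vanish at the endpoints of $[a,b]$, so that $w_0 = \log f_0$ can diverge to $-\infty$ there at an arbitrary (but controlled) rate. The mixture representation \eqref{eq:prior_mixture_rep} is what makes such control feasible: atoms of $P^*$ near $0$ produce the steep slope of $w^*$ required near $x = a$, while the linear term $-\gamma_2^*(x-a)$ combined with atoms of $P^*$ near $b-a$ handles the behaviour at the right endpoint. Coordinating these atoms so that $w^*$ matches $w_0$ sharply near both endpoints while remaining an accurate piecewise-linear concave approximation in the bulk of $[a,b]$ is the technical heart of the argument, and is precisely the content of Proposition~\ref{prop:piecewise_approx}.
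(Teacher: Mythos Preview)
Your approach mirrors the paper's: it states that Theorem~\ref{thm: contraction_compact} follows immediately from (the proof of) Theorem~\ref{thm: contraction}, which proceeds exactly as you sketch --- apply Theorem~\ref{thm:general contraction}, build the approximant via Proposition~\ref{prop:piecewise_approx}, and lower-bound the small-ball probability by handling $\gamma_1,\gamma_2$ and the Dirichlet component separately.

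There is, however, a genuine error in your parameter bounds. You assert $\gamma_1^*\in[0,C_1]$ and $\gamma_2^*\in[-C_2,C_2]$ with $C_1,C_2$ depending only on $\alpha,\beta$. This is false: Proposition~\ref{prop:piecewise_approx}(v) gives only $\bar\gamma_1\le 2(b-a)n^{4/5}$ and $|\bar\gamma_2|\le n^{4/5}$, and these polynomial bounds cannot be improved in general, since (as you yourself note in the ``main obstacle'' paragraph) $w_0=\log f_0$ may have arbitrarily steep slopes near the endpoints of its support, and $w^*$ must match these up to magnitude $n^{4/5}$ to achieve $h^2(f_0,f_{w^*})\lesssim n^{-4/5}$. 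This growth propagates throughout your small-ball argument. First, the Lipschitz constant of the map $P\mapsto W$ in \eqref{eq:prior_mixture_rep} is $\gamma_1$, so to force $\|W-w^*\|_\infty\le c\varepsilon_n$ one actually needs $\sum_i|p_i-\bar p_i|\lesssim \varepsilon_n/\bar\gamma_1$, not $\varepsilon_n/K_n$; this is what drives the $\log(\bar\gamma_1/\varepsilon_n)$ factor in the Dirichlet small-ball bound \eqref{eq: small_ball_p}. Second, your claim that the $\gamma$-marginal prior mass is ``of order at least $\delta_n^2$, up to a constant'' is incorrect: the prior density at $\gamma_1^*\asymp n^{4/5}$ is of order $e^{-c(n^{4/5})^{1/4}}=e^{-cn^{1/5}}$, not a constant. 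The tail assumption $p_{\gamma_i}(|x|)\gtrsim e^{-c_ix^{1/4}}$ with exponent exactly $1/4$ is calibrated precisely so that this is still $\ge e^{-n\varepsilon_n^2}$ (see \eqref{eq: small_ball_g1}--\eqref{eq: small_ball_g2}); your version of the argument makes that specific tail exponent look irrelevant, which it is not. With these corrections the computation goes through, but the bookkeeping is more delicate than you indicate. A minor additional point: Proposition~\ref{prop:piecewise_approx} as stated requires $[-\tfrac{8}{5\alpha}\log n,\tfrac{8}{5\alpha}\log n]\subset[a_n,b_n]$, which fails for fixed $[a,b]$; its proof adapts because the initial truncation step is vacuous once $\supp(f_0)\subset[a,b]$.
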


If $\supp(f_0)$ is not contained in a compact set or is unknown, it suffices to let $-a_n,b_n\rightarrow \infty$ fast enough. A slightly stronger lower bound on the tail of $p_{\gamma_1}$ is consequently required, depending on the size of $(b_n-a_n)$.

\begin{theorem}\label{thm: contraction}
Let $f_0\in \F_{\alpha,\beta}$ for some $\alpha>0$, $\beta\in\R$ and let $\Pi_n$ denote the prior described above with $-a_n,b_n\gg \log n$. Assume further that $(b_n-a_n)\lesssim n^{\mu/5}$ and that the prior density $p_{\gamma_1}$ for $\gamma_1$ satisfies the stronger lower bound $p_{\gamma_1}(x)\gtrsim e^{-c_1 x^{1/(4+\mu)}}$ for some $0 \leq \mu \leq 2$. Then
\begin{equation*}
\Pi_n (f: h(f,f_0) \geq M \varepsilon_n \mid X_1,...,X_n) \rightarrow 0
\end{equation*}
in $P_{f_0}^n$-probability for some $M=M(\alpha,\beta)>0$ and
\begin{equation}
\varepsilon_n = \max \left( (\log n) n^{-2/5} , (b_n-a_n) n^{-4/5} \right).\label{def: eps}
\end{equation}
\end{theorem}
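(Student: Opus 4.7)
The proof is an application of Theorem \ref{thm:general contraction}, so the whole task reduces to verifying the KL small-ball prior mass condition \eqref{eq:small ball} with the rate $\varepsilon_n$ of \eqref{def: eps}. The admissibility of $\varepsilon_n$ for Theorem \ref{thm:general contraction} is automatic: since $(b_n-a_n)\lesssim n^{\mu/5}$ with $\mu\leq 2$, one has $(b_n-a_n)n^{-4/5}\lesssim n^{-2/5}$, so $n^{-2/5}\lesssim \varepsilon_n \lesssim n^{-2/5}\log n$, which lies in the window $n^{-2/5}\lesssim \varepsilon_n\lesssim n^{-3/8-\rho}$.

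The plan is to exhibit, for each $n$, a target parameter triple $(\gamma_1^\ast,\gamma_2^\ast,P^\ast)$ and a neighborhood $\mathcal{U}_n$ of it in parameter space such that every $W$ of the form \eqref{eq:prior_mixture_rep} with $(\gamma_1,\gamma_2,P)\in \mathcal{U}_n$ produces a density $f_W$ in the KL/$V$-ball of $f_0$ of radius $\varepsilon_n$, and then to bound $\Pi_n(\mathcal{U}_n)$ from below by $\exp(-Cn\varepsilon_n^2)$. This proceeds in three steps. First, using $f_0(x)\leq e^{\beta-\alpha|x|}$ and $-a_n,b_n\gg \log n$, the mass of $f_0$ outside $[a_n,b_n]$ is smaller than any polynomial in $n$, so the truncation $\tilde f_0\propto f_0\1_{[a_n,b_n]}$ satisfies $K(f_0,\tilde f_0)+V(f_0,\tilde f_0)=o(\varepsilon_n^2)$ and is itself log-concave. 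Second, apply the piecewise log-linear approximation result (Proposition \ref{prop:piecewise_approx}, applied to $\tilde f_0$ after translation to $[0,b_n-a_n]$) to obtain a piecewise log-linear density $f^\ast=f_{W^\ast}$ with $W^\ast$ of the exact form \eqref{eq:prior_mixture_rep}, where $P^\ast$ is a discrete measure with finitely many atoms $0<u_1^\ast<\dots<u_N^\ast\leq b_n-a_n$ and $\gamma_1^\ast,\gamma_2^\ast$ are scalars, with $K(f_0,f^\ast)\lesssim \varepsilon_n^2$ and $V(f_0,f^\ast)\lesssim \varepsilon_n^2$. The number of knots $N$ and the magnitudes of $\gamma_i^\ast$ grow at most polynomially in $(b_n-a_n)$ and $1/\varepsilon_n$.

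Third, I check stability of the map $(\gamma_1,\gamma_2,P)\mapsto f_W$: if $|\gamma_i-\gamma_i^\ast|$ is small and $P$ is close to $P^\ast$ in the sense that each atom $u_i^\ast$ has an image under $P$ of nearly the right mass lying in a window of width $\delta_n$ around it, then $\|W-W^\ast\|_{L^\infty[a_n,b_n]}\lesssim \varepsilon_n$, which transfers to $K(f^\ast,f_W)+V(f^\ast,f_W)\lesssim \varepsilon_n^2$ by a standard log-Lipschitz argument, using that $W$ and $W^\ast$ are both bounded on $[a_n,b_n]$ by a multiple of $(b_n-a_n)$ times the slope parameters. Combined with the triangle inequality in KL/$V$ this yields $\mathcal{U}_n$ of the advertised form. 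The prior mass on $\mathcal{U}_n$ factors by independence: for $\gamma_i$, the stretched-exponential lower bound $p_{\gamma_i}(x)\gtrsim e^{-c_i|x|^{1/(4+\mu)}}$ gives a small interval probability of order $\exp(-c(\gamma_i^\ast)^{1/(4+\mu)})$, which dominates $\exp(-Cn\varepsilon_n^2)$ precisely because $(\gamma_i^\ast)^{1/(4+\mu)}$ is at most a polynomial of $(b_n-a_n)\lesssim n^{\mu/5}$ calibrated to match $n\varepsilon_n^2\asymp \max(n^{1/5}\log^2 n,\, n^{(1+2\mu)/5})$. For $P$, the standard Dirichlet small-ball bound together with the uniform lower bound $H(U)\gtrsim \lambda(U)/(b_n-a_n)^\eta$ yields prior mass at least $\exp(-CN\log((b_n-a_n)/\delta_n))\geq \exp(-C'n\varepsilon_n^2)$, since $N\log(1/\delta_n)$ is only polylogarithmic in $n$.

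The main obstacle is the second step: producing a piecewise log-linear approximation of $f_0$ of the exact mixture form \eqref{eq:prior_mixture_rep} with tight quantitative control on the number of knots, the knot spacing, and the magnitudes of $\gamma_1^\ast,\gamma_2^\ast$, all simultaneously calibrated against the growing window $[a_n,b_n]$. The exponent $1/(4+\mu)$ in the tail condition on $p_{\gamma_1}$ is dictated by exactly this balance: the slope parameter $\gamma_1^\ast$ needed to represent a log-concave shape across a window of length $(b_n-a_n)\lesssim n^{\mu/5}$ is inflated by a factor polynomial in $(b_n-a_n)$, and the heavier tail is what allows the prior $\gamma_1$-mass to still exceed $e^{-Cn\varepsilon_n^2}$ when $\varepsilon_n$ is the wider rate $(b_n-a_n)n^{-4/5}$. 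Once this calibration is done, the remaining steps are either classical Dirichlet process estimates or direct computations on the mixture representation.
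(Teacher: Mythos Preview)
Your overall architecture matches the paper's: apply Theorem \ref{thm:general contraction}, use Proposition \ref{prop:piecewise_approx} to produce a piecewise log-linear target, establish $L^\infty$-stability of the mixture representation, and lower bound the prior mass factorwise via the Dirichlet small-ball lemma and the tail bounds on $p_{\gamma_i}$. The calibration you describe for the exponent $1/(4+\mu)$ is also correct in spirit: with $\bar\gamma_1\leq 2(b_n-a_n)n^{4/5}\lesssim n^{(4+\mu)/5}$ one gets $\bar\gamma_1^{1/(4+\mu)}\lesssim n^{1/5}\leq n\varepsilon_n^2$.

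There is, however, a genuine gap in your first step. You claim that the truncation $\tilde f_0\propto f_0\1_{[a_n,b_n]}$ satisfies $K(f_0,\tilde f_0)+V(f_0,\tilde f_0)=o(\varepsilon_n^2)$ and then plan to ``combine with the triangle inequality in KL/$V$''. Both parts fail. If $\mathrm{supp}(f_0)\not\subset[a_n,b_n]$ (e.g.\ $f_0$ Gaussian), then $K(f_0,\tilde f_0)=+\infty$ since $\tilde f_0$ vanishes on a set where $f_0>0$. More importantly, every prior draw $f_W$ is supported on $[a_n,b_n]$, so $K(f_0,f_W)=+\infty$ for \emph{every} draw, and the small-ball event in \eqref{eq:small ball} has prior probability zero. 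You therefore cannot verify \eqref{eq:small ball} for $f_0$ directly; the reduction to a compactly supported truth must happen \emph{before} invoking Theorem \ref{thm:general contraction}, not inside the small-ball estimate. The paper does this via a rejection-sampling argument (first paragraph of the proof): one replaces the observations $X_1,\dots,X_n$ by those falling in $[-t_n,t_n]$ for $t_n=a'\log n$, $a'>\alpha^{-1}$, notes that with $P_{f_0}^n$-probability $1-o(1)$ no observation is discarded, and hence the posterior is unchanged. One then applies Theorem \ref{thm:general contraction} with $f_{0,n}\propto f_0\1_{[-t_n,t_n]}$ as the true density, for which $\mathrm{supp}(f_{0,n})\subset[a_n,b_n]$ and the KL small-ball is nontrivial.

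A second, related issue: there is no triangle inequality for $K$ or $V$. The paper instead uses Lemma 8 of Ghosal and van der Vaart (2007), which bounds $K(f_{0,n},f_W)$ and $V(f_{0,n},f_W)$ by a multiple of $\big(h^2(f_{0,n},\bar f_n)+h^2(\bar f_n,f_W)\big)$ times a power of $1+\log\|f_{0,n}/\bar f_n\|_\infty+\log\|\bar f_n/f_W\|_\infty$. This is why Proposition \ref{prop:piecewise_approx} carries the density-ratio control $(iv)$ and why the $L^\infty$-bound on $\bar w_n-W$ is needed to control $\|\bar f_n/f_W\|_\infty$; your proposal should make this explicit rather than appealing to a nonexistent triangle inequality.
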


Theorem \ref{thm: contraction_compact} follows immediately from Theorem \ref{thm: contraction} and so its proof is omitted. If $(b_n-a_n) = O((\log n)n^{2/5})$, then we obtain the minimax rate for log-concave density estimation in Theorem \ref{thm: contraction}, up to a logarithmic factor. Since the Hellinger distance dominates the total variation distance, the above also implies posterior convergence in total variation at the same rate $\varepsilon_n$ given in \eqref{def: eps}. We also note that all the above statements are proved uniformly over $f_0 \in \F_{\alpha,\beta}$.

The posterior mean, also considered in the simulation study, is not necessarily log-concave. Nevertheless one can construct log-concave density estimators by separately computing the posterior mean for each parameter $\theta,p,\gamma_1,\gamma_2$ and then constructing the corresponding log-concave density according to \eqref{eq:density_transform} and \eqref{eq:prior_mixture_rep}. Another approach is to take the smallest Hellinger ball accumulating, say, $50\%$ of the posterior mass and sample an arbitrary log-concave density from that ball. It is straightforward to verify that both of these estimators achieve the minimax concentration rate (up to the same logarithmic factor).

It is also of interest to obtain a fully Bayesian procedure that does not require the user to define the support of the prior draws.  We therefore consider a hierarchical prior where one places a prior on the end points $a$ and $b$, now not necessarily depending on $n$. This method has the advantage of employing a prior that does not depend on the data, but is slightly more involved computationally than the simple empirical Bayes approach. Assign to $(a,b)$ a prior supported on the open half-space $\{(a,b):a<b\}$ that has a Lebesgue density $\pi(a,b)$ satisfying
\begin{equation}\label{eq:hier_cond}
\pi(a,b)  \geq Ce^{-c_1|a|^q -c_2|b-a|^r} \quad \quad \text{for all }a<b
\end{equation}
and some $c_1,c_2,C,q,r>0$. Such a distribution can be easily constructed by first drawing $a\sim \pi_1$, where the Lebesgue density $\pi_1(a) \geq Ce^{-c|a|^q}$, and then independently drawing $(b-a)|a \sim \pi_2$, where $\pi_2$ is a Lebesgue density on $(0,\infty)$ satisfying $\pi_2(b-a) \geq Ce^{-c|b-a|^r}$. Conditionally on $(a_n,b_n)=(a,b)$, the prior is then exactly as above. This hierarchical construction leads to a fully Bayesian procedure that again contracts at the (near-)minimax rate.

\begin{theorem}\label{thm: hier}
Let $f_0\in \F_{\alpha,\beta}$ for some $\alpha>0$, $\beta\in\R$ and let $\Pi_n$ denote the prior described above with hyperprior $(a,b)\sim \pi(a,b)$ satisfying \eqref{eq:hier_cond}. Then
\begin{equation*}
\Pi_n (f: h(f,f_0) \geq M (\log n) n^{-2/5} \mid X_1,...,X_n) \rightarrow 0
\end{equation*}
in $P_{f_0}^n$-probability for some $M=M(\alpha,\beta)>0$.
\end{theorem}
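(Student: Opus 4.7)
The plan is to apply Theorem \ref{thm:general contraction} with $\eps_n = (\log n) n^{-2/5}$. Testing and the upper restriction $\eps_n\lesssim n^{-3/8-\rho}$ are free, so everything reduces to verifying the prior mass condition \eqref{eq:small ball}. My strategy is to condition on $(a,b)$, restrict to a well-chosen event $E_n$ in the hyperprior that forces $[a,b]$ to contain the effective support of $f_0$ while keeping $b-a$ of order $\log n$, and then re-use the conditional prior mass bound already obtained in the proof of Theorem \ref{thm: contraction}.

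First I would fix the constants of the problem. Since $f_0\in \F_{\alpha,\beta}$, the bound $f_0(x)\leq e^{\beta-\alpha|x|}$ yields a constant $A=A(\alpha,\beta)>0$ such that
$$\int_{\R\setminus[-A\log n,\,A\log n]} f_0(x)\,dx \leq n^{-10}.$$
Define the hyperprior event
$$E_n = \big\{(a,b): a\in[-2A\log n,\,-A\log n],\ b\in[A\log n,\,2A\log n]\big\}.$$
By \eqref{eq:hier_cond}, $\pi(E_n) \geq c (\log n)^2 \exp\!\big(-c_1(2A\log n)^q - c_2(4A\log n)^r\big)$. Since $n\eps_n^2 = n^{1/5}(\log n)^2$ grows polynomially in $n$ while the hyperprior cost is only poly-logarithmic, $\pi(E_n) \geq \exp(-\tfrac{1}{2} n\eps_n^2)$ for all $n$ large.

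Next I would obtain a conditional prior mass bound. For any fixed $(a,b)\in E_n$, the conditional prior matches the fixed-support prior of Theorems \ref{thm: contraction_compact}--\ref{thm: contraction} on the interval $[a,b]$, which satisfies $[a,b]\supset [-A\log n, A\log n]$ and therefore captures all but $n^{-10}$ of the mass of $f_0$. Re-running the argument used to prove Theorem \ref{thm: contraction}, built from Proposition \ref{prop:piecewise_approx} (piecewise log-linear approximation on $[a,b]$) combined with the small-ball behaviour of the Dirichlet process and the tail assumptions on $p_{\gamma_1}, p_{\gamma_2}$, should give
$$\Pi_n\!\left(K(f_0,f)\leq \eps_n^2,\, V(f_0,f)\leq \eps_n^2\,\big|\,(a,b)\right) \geq \exp(-Cn\eps_n^2),$$
uniformly over $(a,b)\in E_n$. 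Crucially, since $b-a\leq 4A\log n$, the rate in \eqref{def: eps} reduces to $(\log n)n^{-2/5}$, matching the target rate of Theorem \ref{thm: hier}. Integrating over $E_n$ and combining with the previous step verifies \eqref{eq:small ball}, after which Theorem \ref{thm:general contraction} concludes the argument.

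The main technical obstacle will be carefully handling the case in which $\supp(f_0)$ is not contained in $[a,b]$: one has to control the Kullback--Leibler discrepancy between $f_0$ and the renormalised restriction $f_0\1_{[a,b]}/\!\int_{[a,b]}\!f_0$, showing that the truncation error in both $K(f_0,\cdot)$ and $V(f_0,\cdot)$ is of strictly smaller order than $\eps_n^2$. This should follow quantitatively from the exponential tail bound $f_0(x)\leq e^{\beta-\alpha|x|}$, but needs to be done with care to avoid a blow-up of $\log(f_0/f)$ near the boundary of $[a,b]$; standard log-concavity arguments (in particular the lower bound on $f_0$ on any compact sub-interval of its support) should handle this. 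Once this truncation step and the uniform conditional lower bound are in place, the proof reduces to a direct application of Theorem \ref{thm:general contraction}.
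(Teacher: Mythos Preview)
Your overall architecture matches the paper's proof almost exactly: restrict the hyperprior to a logarithmic window $E_n$ (the paper calls it $\Delta_n$), lower bound $\pi(E_n)$ using \eqref{eq:hier_cond}, and then invoke the conditional small-ball estimate from the proof of Theorem~\ref{thm: contraction} uniformly over this window. Where you diverge from the paper is precisely at the step you flag as the ``main technical obstacle,'' and there your proposed resolution does not work.

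If $f_0$ has unbounded support (e.g.\ a Gaussian), then for \emph{every} $f$ in the support of the conditional prior given $(a,b)$ one has $f\equiv 0$ outside $[a,b]$ while $f_0>0$ there, so $K(f_0,f)=\int f_0\log(f_0/f)=+\infty$. Hence the conditional small-ball probability $\Pi_n(K(f_0,f)\leq\eps_n^2\mid(a,b))$ is identically zero for every finite $(a,b)$, and integrating over $E_n$ cannot repair this. Your suggestion to ``control the Kullback--Leibler discrepancy between $f_0$ and the renormalised restriction'' hits the same wall: $K\bigl(f_0,\,f_0\1_{[a,b]}/\!\int_{[a,b]}f_0\bigr)=+\infty$ whenever $f_0$ has any mass outside $[a,b]$, so the truncation error is not ``of strictly smaller order than $\eps_n^2$'' but infinite. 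No log-concavity lower bound on $f_0$ inside $[a,b]$ helps, because the divergence comes from the region outside $[a,b]$.

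The paper sidesteps this by performing the truncation \emph{at the level of the observations} rather than inside the KL ball. Following the first paragraph of the proof of Theorem~\ref{thm: contraction} (borrowed from Theorem~2 of Ghosal--van der Vaart \cite{ghosal2007}), one shows via a rejection-sampling coupling that the posterior based on $X_1,\dots,X_n\sim f_0$ agrees, with $P_{f_0}^n$-probability tending to one, with the posterior based on observations from the compactly supported density $f_{0,n}=f_0\1_{[-t_n,t_n]}/\int f_0\1_{[-t_n,t_n]}$, $t_n\asymp\log n$. One then verifies \eqref{eq:small ball} for $f_{0,n}$ rather than $f_0$; since $\supp(f_{0,n})\subset[a,b]$ for every $(a,b)\in\Delta_n$, the KL and $V$ quantities are finite and the rest of your argument goes through verbatim. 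Once you insert this coupling step at the outset, your sketch is correct and essentially identical to the paper's.
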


Dirichlet process mixture priors are popular in density estimation due to the conjugacy of the posterior distribution, thereby providing methods that are highly efficient computationally. However, due to the exponentiation \eqref{eq:density_transform}, this conjugacy property no longer holds, resulting in a less attractive prior choice that brings computational challenges. In practice, it is common to use approximations of the Dirichlet process to speed up computations, see for instance Chapter 4.3.3 of \cite{vandervaartbook2017}. 

We firstly consider the Dirichlet multinomial distribution as a replacement for the Dirichlet process in our prior. By the proof of Theorem \ref{thm: contraction}, the underlying true log-concave density can be well approximated by a piecewise log-linear density with at most $N=Cn^{1/5}\log n$ knots, for some large enough constant $C>0$. In view of this, it is reasonable to take $N$ atoms in the distribution. The corresponding prior on log-concave densities then takes the form
\begin{equation}
\begin{split}
\theta_i\stackrel{iid}{\sim} \bar{H}\1_{[0,b_n-a_n]},\quad \text{for $i=1,...,N$},\\
p=(p_1,...p_{N})\sim Dir(\alpha_1,....,\alpha_N),\\
\gamma_i \stackrel{iid}{\sim} p_{\gamma_i},\quad i=1,2,\\
f_{\theta,p,\gamma_1,\gamma_2}(x)=\frac{\exp\{\gamma_1\sum_{i=1}^N \frac{\theta_i\wedge (x-a_n)}{\theta_i}p_i-\gamma_2(x-a_n) \} \1_{[a_n,b_n]}(x)}{  \int_{a_n}^{b_n}\exp\{\gamma_1\sum_{i=1}^N \frac{\theta_i\wedge (u-a_n)}{\theta_i}p_i-\gamma_2(u-a_n) \}du}, \label{prior: approx1}
\end{split}
\end{equation}
where $\alpha_i,$ $i=1,...,N,$ are chosen such that $\alpha_i=\alpha/N$ for some arbitrary $0<\alpha\leq H(\R^+)$. 

An alternative choice for the mixing prior is to truncate the stick-breaking representation of the Dirichlet process at a fixed level.  Similarly to the Dirichlet multinomial distribution, we truncate the stick-breaking process at level $N=Cn^{1/5}\log n$, resulting in the same hierarchical prior as in \eqref{prior: approx1} with the only difference being that the distribution of $p$ in the $N$-simplex is given by
\begin{align}
p_i\sim V_i \prod_{j=1}^{i-1}(1-V_j),\,\text{where $V_i\sim \text{Beta}(1,H(\R^+))$, $i=1,...,N-1$.} \label{prior: approx2}
\end{align}
Both of these computationally more efficient approximations have the same theoretical guarantees as the full exponentiated Dirichlet process prior $\Pi_n$ or its hierarchical Bayes equivalent.

\begin{corollary}\label{thm: approx}
Let $f_0\in \F_{\alpha,\beta}$ for some $\alpha>0$, $\beta\in\R$ and let $\Pi_n'$ denote either the prior \eqref{prior: approx1} or \eqref{prior: approx2}. If $-a_n,b_n\gg \log n$, $(b_n-a_n)\lesssim n^{\mu/5}$ and the prior density $p_{\gamma_1}$ for $\gamma_1$ satisfies the stronger lower bound $p_{\gamma_1}(x)\gtrsim e^{-c_1 x^{1/(4+\mu)}}$ for some $0 \leq \mu \leq 2$, then
\begin{equation*}
\Pi_n' (f: h(f,f_0) \geq M \varepsilon_n \mid X_1,...,X_n) \rightarrow 0
\end{equation*}
in $P_{f_0}^n$-probability for some $M=M(\alpha,\beta)>0$ and $\varepsilon_n$ given by \eqref{def: eps}.

If we additionally assign a hyperprior $\pi(a,b)$ satisfying \eqref{eq:hier_cond} to $(a,b)$ and no longer require the strong lower bound for $p_{\gamma_1}$, then the above holds for the resulting posterior with $\eps_n = (\log n)n^{-2/5}$.
\end{corollary}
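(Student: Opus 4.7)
The strategy is to verify the Kullback--Leibler small-ball condition \eqref{eq:small ball} for each approximating prior $\Pi_n'$ and then invoke Theorem \ref{thm:general contraction}. The proof strategy of Theorem \ref{thm: contraction} produces an approximation of $f_0$ (to Hellinger distance $\lesssim \eps_n$) by a piecewise log-linear density $f_{w^*}$ whose representation \eqref{eq:prior_mixture_rep} uses scalars $\gamma_1^*, \gamma_2^*$ and a discrete probability measure $P^*$ supported on $k \leq N := \lceil Cn^{1/5}\log n\rceil$ atoms $\theta_1^*, \ldots, \theta_k^* \in [0, b_n-a_n]$ with weights $p_1^*, \ldots, p_k^*$. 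Since both approximating priors use exactly $N$ atoms, they are perfectly matched to this approximation level, and the task reduces to lower bounding the probability that (i) $k$ of the $N$ i.i.d.\ atoms lie within a polynomially small window of $\theta_1^*, \ldots, \theta_k^*$; (ii) the $N$-dimensional weight vector is close in $L^1$ to the extended target $(p_1^*, \ldots, p_k^*, 0, \ldots, 0)$; and (iii) the scalars $\gamma_1, \gamma_2$ are close to $\gamma_1^*, \gamma_2^*$. The total lower bound required is $\exp(-C n\eps_n^2)$, which is at worst of order $\exp(-C n^{1/5}(\log n)^2)$.

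For both approximating priors, event (i) has probability at least $\binom{N}{k}(c\delta_n/(b_n-a_n)^\eta)^k \geq \exp(-c'n\eps_n^2)$ for $\delta_n$ decaying polynomially in $n$, using independence of the $\theta_i$'s together with the base-measure bound $\bar H(U) \gtrsim \lambda(U)/(b_n-a_n)^\eta$; and the factors for (iii) contribute $\exp(-c(|\gamma_1^*|^{1/(4+\mu)} + |\gamma_2^*|^{1/(4+\mu)})) \geq \exp(-c''n\eps_n^2)$, exactly as in the proof of Theorem \ref{thm: contraction}. For (ii) in the Dirichlet multinomial case \eqref{prior: approx1}, the standard small-ball estimate for symmetric $\text{Dir}(\alpha/N, \ldots, \alpha/N)$ around any target vector in the simplex (cf.\ e.g.\ \cite{vandervaartbook2017}) is $\exp(-cN\log(N/\delta))$, which remains $\exp(-cn^{1/5}(\log n)^2)$ for $\delta$ polynomial in $n$. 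For (ii) in the truncated stick-breaking case \eqref{prior: approx2}, one instead chooses stick-breaking variables $V_1^*, \ldots, V_{N-1}^* \in (0,1)$ generating the target weights (with the first $k$ absorbing essentially all the mass and the remaining $V_j^*$ bounded away from $0$ and $1$); the joint density of $(V_i) \sim \text{Beta}(1, H(\R^+))^{\otimes (N-1)}$ is bounded below by a positive constant on any compact interior subset of $(0,1)^{N-1}$, so an $L^1$-neighborhood of the target has probability $\gtrsim \delta^{N-1} \geq \exp(-cn\eps_n^2)$. Multiplying (i)--(iii) yields the required KL small-ball lower bound.

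For the hierarchical variant, I would condition on $(a,b)$ lying in a fixed-volume neighborhood $U_n$ of $(-L\log n, L\log n)$ for sufficiently large $L$. On $U_n$ we have $b - a \asymp \log n$, so the argument above applies with effective $\mu = 0$, and the weaker bound $p_{\gamma_1}(x) \gtrsim e^{-c_1 x^{1/4}}$ suffices since $|\gamma_1^*| \lesssim \log n$ contributes only $(\log n)^{1/4}$ to the exponent, well within the budget $n\eps_n^2$. The hyperprior factor $\int_{U_n}\pi(a,b)\,dadb \gtrsim \exp(-C(\log n)^{q\vee r})$ by \eqref{eq:hier_cond} is negligible compared to $n\eps_n^2$, so Theorem \ref{thm:general contraction} delivers the stated contraction rate $\eps_n = (\log n)n^{-2/5}$.

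The main obstacle is establishing the weight small-ball estimate for the truncated stick-breaking prior: one must carefully order the $N$ atoms so that the desired target lies in the interior of the Beta parametrisation and no boundary degeneracy destroys the polynomial lower bound $\delta^{N-1}$. The corresponding Dirichlet estimate is more classical, while the atom-placement and scalar factors essentially repeat the analysis already carried out for the full exponentiated DP prior in the proof of Theorem \ref{thm: contraction}.
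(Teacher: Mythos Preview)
Your approach is essentially the same as the paper's: reduce to the KL small-ball via the $L^\infty$ inclusion \eqref{eq: KL_to_L_inf}, pad the $\bar N$-atom target with zero weights to length $N$, and lower bound separately the probabilities for atom placement, $\ell^1$ weight proximity, and the scalars $\gamma_1,\gamma_2$. For the stick-breaking case the paper resolves your acknowledged boundary obstacle concretely: it writes the padded target weights in decreasing order, represents them via $\bar v_1,\dots,\bar v_{N-1}\in[0,1]$ (so that the zero targets correspond to $\bar v_i$ at the boundary), and then confines each $v_i$ to an interval $I_i\subset(0,1)$ obtained by clipping $[\bar v_i-\delta,\bar v_i+\delta]$ away from $\{0,1\}$ by $\delta/2$ with $\delta=\eps_n n^{-4/5}N^{-2}$; a telescoping bound $|p_i-\bar p_i|\le\sum_{j\le i}|v_j-\bar v_j|$ then gives the $\ell^1$ control, and $P(v_i\in I_i)\gtrsim\delta^{1\vee H(\R^+)}$ yields the required $e^{-cN\log n}$ lower bound. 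Your sketch is compatible with this.

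One factual slip in your hierarchical paragraph: by Proposition~\ref{prop:piecewise_approx}(v), $\gamma_1^*\le 2(b-a)n^{4/5}\asymp n^{4/5}\log n$, not $\lesssim\log n$. Your conclusion is nonetheless correct, since $(\gamma_1^*)^{1/4}\lesssim n^{1/5}(\log n)^{1/4}=o(n\eps_n^2)$, so the weaker tail assumption $p_{\gamma_1}(x)\gtrsim e^{-c_1x^{1/4}}$ still gives $\Pi(|\gamma_1-\gamma_1^*|\le\eps_n)\ge e^{-c'n\eps_n^2}$.
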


The proofs of Theorem \ref{thm: contraction} and Corollary \ref{thm: approx} establish the small-ball probability \eqref{eq:small ball} by approximating a log-concave density in $\F_{\alpha,\beta}$ with a suitable piecewise log-linear density. This approximation requires several key properties, which make its construction non-standard and technically involved, and it may be of independent interest. The proof of Proposition \ref{prop:piecewise_approx} is deferred to Section \ref{sec:linear proofs}.

\begin{proposition}\label{prop:piecewise_approx}
Let $f_0\in \F_{\alpha,\beta}$ and $([a_n,b_n])_n$ be a sequence of compact intervals such that $[-\tfrac{8}{5\alpha} \log n,\tfrac{8}{5\alpha} \log n]\subset [a_n,b_n]$ and $(b_n - a_n) = o(n^{4/5})$. For any $n \geq n_0$, where $n_0$ is an integer depending only on $\alpha$ and $\beta$, there exists a log-concave density $\bar{f}_n$ that is piecewise log-linear with $\bar{N} \leq C(\alpha,\beta) n^{1/5} \log n$ knots $z_1,...,z_{\bar{N}}\in[0,b_n-a_n]$ satisfying the following properties:
\begin{itemize}
\item[(i)] $h^2(f_0,\bar{f}_n) \leq C(\alpha,\beta) [(\log n)^2 n^{-4/5} + (b_n-a_n)^2 n^{-8/5}]$,
\item[(ii)] $\{ x \in \R: \bar{f}_n(x) >0 \} = [a_n,b_n]$,
\item[(iii)] the knots are $cn^{-6/5}\log n$-separated for some universal constant $c>0$,
\item[(iv)] $f_0(x)\leq C(\alpha,\beta) \bar{f}_n(x)$ for all $x\in [a_n,b_n]$,
\item[(v)] there exist $\bar\gamma_1\in [0,2(b_n-a_n) n^{4/5}]$, $|\bar\gamma_2|\leq  n^{4/5}$, $\bar\gamma_3\in\mathbb{R}$ and $(\bar{p}_1,..,\bar{p}_{\bar{N}})$ satisfying $p_i\geq 0$ and $\sum_{i=1}^{\bar{N}} p_i=1$, such that
$$\bar{f}_n(x)= \exp \left( \bar{\gamma}_1\sum_{i=1}^{\bar{N}}\frac{z_i \wedge (x-a_n)}{z_i} \bar{p}_i-\bar{\gamma}_2 (x-a_n)+\bar{\gamma}_3 \right) \1_{[a_n,b_n]}(x).$$
\end{itemize}
\end{proposition}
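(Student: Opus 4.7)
The plan is to build a piecewise linear concave function $\bar w_n$ on $[a_n,b_n]$ and take $\bar f_n = e^{\bar w_n}/Z$ with $Z = \int_{[a_n,b_n]} e^{\bar w_n}$. I split $[a_n,b_n]$ into a core $[-L,L]$ with $L=\tfrac{8}{5\alpha}\log n$ (inside $[a_n,b_n]$ by hypothesis) and the two extension regions. The tail bound $f_0\leq e^{\beta-\alpha|x|}$ yields $\int_{|x|>L} f_0 \lesssim n^{-8/5}$, and also gives the key interior bound: on the subset of $\supp(f_0)\cap[-L,L]$ where $w_0\geq -c_0\log n$ for a suitable $c_0=c_0(\alpha,\beta)$, the total decrease $V$ of the right-derivative $w_0'$ is $O(\log n)$.

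On this ``effective core'' I let $\bar w_n$ be the secant interpolant of $w_0$ at an adaptive partition $t_0<\cdots<t_k$ with $k\asymp n^{1/5}\log n$, balancing piece lengths $t_{i+1}-t_i\lesssim L/k\lesssim n^{-1/5}$ against derivative decreases $w_0'(t_i^+)-w_0'(t_{i+1}^-)\lesssim V/k$. The standard concave estimate $0\leq w_0(x)-\bar w_n(x)\leq\tfrac14(t_{i+1}-t_i)(w_0'(t_i^+)-w_0'(t_{i+1}^-))$ then gives a uniform sup error $\lesssim LV/k^2\lesssim n^{-2/5}$. When two knots are closer than $cn^{-6/5}\log n$, I merge them; the merged piece has length at most $cn^{-6/5}\log n$ and derivative decrease at most $V$, so the inflation of the sup error per merge is $O(n^{-6/5}(\log n)^2)\ll n^{-2/5}$, preserving both the sup-error bound and the knot count. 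On the complement of the effective core within $[a_n,b_n]$ (the boundary strips inside $[-L,L]$ together with the extensions $[a_n,-L]$, $[L,b_n]$), I prolong $\bar w_n$ linearly with slopes of magnitude $\asymp\alpha$, steeper than the outermost core slopes so that concavity is preserved (at most a handful of auxiliary knots added), ensuring $\bar w_n(x)\geq -\alpha|x|+O(\log n)$ throughout.

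After normalization, (ii) and (iii) hold by construction. For (iv), on the effective core one combines $w_0-\bar w_n=o(1)$ with $Z=1+o(1)$, while outside, the extension slope bound gives $\bar w_n\geq w_0-O(\log n)$, so $f_0\leq C\bar f_n$. For (i), I split $h^2$: the effective core contributes $O((\log n)^2 n^{-4/5})$ via $h^2(f,g)\leq\tfrac14\int(\log(f/g))^2 f$, the boundary strips $\{w_0<-c_0\log n\}$ contribute $\int f_0+\int \bar f_n \lesssim n^{-c_0}$ (negligible for $c_0$ large), and the extensions contribute $\int_{|x|>L}(f_0+\bar f_n)\lesssim n^{-8/5}\leq(b_n-a_n)^2 n^{-8/5}$. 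For (v), I decompose the piecewise-constant slope function $\bar w_n'$ on $[a_n,b_n]$ as $m_\infty+\sum_i\Delta_i\1_{[a_n,t_i)}$, with $m_\infty$ the rightmost slope and $\Delta_i=\bar w_n'(t_i^-)-\bar w_n'(t_i^+)>0$ the jumps at the knots; integrating, shifting by $-a_n$ to get $z_i=t_i-a_n\in[0,b_n-a_n]$, and setting $\bar\gamma_1=\sum_i\Delta_i z_i$, $\bar p_i=\Delta_i z_i/\bar\gamma_1$, $\bar\gamma_2=-m_\infty$, $\bar\gamma_3=\bar w_n(a_n)-\log Z$ gives exactly the form in (v). The bounds $\bar\gamma_1\leq(b_n-a_n)\sum_i\Delta_i\lesssim(b_n-a_n)\log n\ll(b_n-a_n)n^{4/5}$ and $|\bar\gamma_2|\lesssim\alpha\ll n^{4/5}$ follow easily.

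The main obstacle I anticipate is the behaviour of $w_0$ near the endpoints of $\supp(f_0)$, where $w_0'$ may blow up (e.g.\ $w_0\sim-\sqrt{x-c_1}$ near an endpoint $c_1$). Choosing the effective-core threshold so that the interior bound $V\lesssim\log n$ holds, while simultaneously arranging that the boundary strips contribute negligibly to $h^2$ and that property (iv) is maintained on them through the linear extension, is the technical core of the proof; verifying that the adaptive knot placement combined with the merging-for-separation step delivers the sup-error rate uniformly over $f_0\in\F_{\alpha,\beta}$ will absorb most of the remaining work.
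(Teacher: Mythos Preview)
Your overall architecture---core approximation plus linear extension---matches the paper's, but the argument has a genuine gap at precisely the point you flag as the main obstacle, and your proposed resolution does not work.

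The central false claim is that on the effective core $\{x:w_0(x)\geq -c_0\log n\}$ the total decrease $V$ of $w_0'$ is $O(\log n)$. Take the triangular density $f_0(x)=(1-|x|)_+$, which lies in $\F_{\alpha,\beta}$ for suitable $\alpha,\beta$. Then $w_0(x)=\log(1-|x|)$ and the effective core is $\{|x|\leq 1-n^{-c_0}\}$, at whose endpoints $|w_0'|=n^{c_0}$; hence $V=2n^{c_0}$ is polynomial in $n$, not logarithmic. No choice of $c_0$ repairs this: shrinking $c_0$ still leaves $V$ polynomial, while the boundary strips (where $w_0<-c_0\log n$) have $\int f_0\asymp n^{-2c_0}$, so making them negligible in Hellinger forces $c_0\geq 2/5$, whence $V\gtrsim n^{2/5}$.

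This breaks two downstream claims. First, with $V$ polynomial, your stated uniform bounds ``$t_{i+1}-t_i\lesssim L/k$ and $w_0'(t_i^+)-w_0'(t_{i+1}^-)\lesssim V/k$'' cannot both hold with $k\asymp n^{1/5}\log n$ while delivering sup error $LV/k^2\lesssim n^{-2/5}$. Second, the outermost secant slope of $\bar w_n$ on the core is a value of $w_0'$ near the core boundary, hence of polynomial magnitude; to preserve concavity the linear extension must be at least as steep, so extension slopes $\asymp\alpha$ are impossible, and your claimed bounds $\bar\gamma_1\lesssim(b_n-a_n)\log n$, $|\bar\gamma_2|\lesssim\alpha$ fail. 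This is why the proposition only asserts $\bar\gamma_1\leq 2(b_n-a_n)n^{4/5}$ and $|\bar\gamma_2|\leq n^{4/5}$.

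The paper's fix is to decompose the core not by the value of $w_0$ alone but additionally by dyadic level sets of $|w_0'|$, namely $A_{2,j}^n=\{2^{-j-1}n^{4/5}<|w_0'|\leq 2^{-j}n^{4/5}\}$ for $j=0,\dots,O(\log n)$, together with a bounded-derivative piece $A_3^n$. On each $A_{2,j}^n$ the derivative variation is $M_j\asymp 2^{-j}n^{4/5}$ and, because $w_0$ is bounded, the length is $\lesssim 2^j n^{-4/5}\log n$, so the product $M_j|A_{2,j}^n|\lesssim\log n$ and the piecewise-linear lemma yields sup error $\lesssim n^{-2/5}\log n$ with $O(n^{1/5})$ knots per level, hence $O(n^{1/5}\log n)$ knots in total. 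The region $|w_0'|>n^{4/5}$ is then handled by extending linearly with slope capped at $n^{4/5}$, which preserves concavity and gives the parameter bounds in (v). Your adaptive partition idea is salvageable, but it must incorporate this derivative stratification (or an equivalent device) rather than relying on the false bound $V=O(\log n)$.
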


It is relatively straightforward to establish an approximation of $f_0$ satisfying $(i)$. However, approximating $f_0$ by $\bar{f}_n$ in a Kullback-Leibler type sense, as in \eqref{eq:small ball}, necessitates control of the support of $\bar{f}_n$ via $(ii)$ and uniform control of the ratio $f_0/\bar{f}_n$ via $(iv)$. The most difficult property to establish is the polynomial separation of the points in $(iii)$. This is needed to ensure that the Dirichlet process prior simultaneously puts sufficient mass in a neighbourhood of each of the knots $z_i$, $i=1,\dots,\bar{N}$.	 Setting $[a_n,b_n] = [-\tfrac{8}{5\alpha} \log n,\tfrac{8}{5\alpha} \log n]$ yields the following corollary.

\begin{corollary}
Let $f_0\in \F_{\alpha,\beta}$. For any $n \geq n_0$, where $n_0$ is an integer depending only on $\alpha$ and $\beta$, there exists a log-concave density $\bar{f}_n$ supported on $[-\tfrac{8}{5\alpha} \log n,\tfrac{8}{5\alpha}\log n]$ that is piecewise log-linear with $O(n^{1/5} \log n)$ knots and satisfies $h^2(f_0,\bar{f}_n) \leq C(\alpha,\beta)(\log n)^2 n^{-4/5}$. Moreover, we may take the knots to be $cn^{-6/5}\log n$-separated for some universal constant $c>0$.
\end{corollary}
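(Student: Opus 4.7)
The plan is to obtain the corollary as an immediate specialization of Proposition \ref{prop:piecewise_approx} with the explicit choice $a_n = -\tfrac{8}{5\alpha}\log n$ and $b_n = \tfrac{8}{5\alpha}\log n$. All that is really needed is to check that this choice satisfies the two hypotheses of the proposition and then simplify the resulting bounds.

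First I would verify that the containment $[-\tfrac{8}{5\alpha}\log n, \tfrac{8}{5\alpha}\log n] \subset [a_n,b_n]$ holds trivially (with equality), and that $b_n - a_n = \tfrac{16}{5\alpha}\log n$, which is certainly $o(n^{4/5})$ as $n \to \infty$. Hence for all $n \geq n_0(\alpha,\beta)$, Proposition \ref{prop:piecewise_approx} produces a piecewise log-linear density $\bar f_n$ supported exactly on $[-\tfrac{8}{5\alpha}\log n, \tfrac{8}{5\alpha}\log n]$ (by property $(ii)$) with $\bar N \leq C(\alpha,\beta) n^{1/5}\log n$ knots, i.e.\ $O(n^{1/5}\log n)$ as required.

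Next I would simplify the Hellinger bound in $(i)$ for this specific choice. Since $(b_n-a_n)^2 = \tfrac{256}{25\alpha^2}(\log n)^2$, the second term $(b_n-a_n)^2 n^{-8/5} = O((\log n)^2 n^{-8/5})$ is dominated by the first term $(\log n)^2 n^{-4/5}$, so property $(i)$ collapses to
\begin{equation*}
h^2(f_0, \bar f_n) \leq C(\alpha,\beta)(\log n)^2 n^{-4/5},
\end{equation*}
with a possibly enlarged constant $C(\alpha,\beta)$. The knot-separation statement $(iii)$ is independent of the choice of $a_n, b_n$, so the $cn^{-6/5}\log n$ separation transfers verbatim.

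Since this corollary is a one-line substitution into Proposition \ref{prop:piecewise_approx}, there is no genuine obstacle; all the real work is deferred to the proof of that proposition, where the technical difficulties (especially arranging the polynomial knot separation in $(iii)$ while simultaneously controlling the ratio $f_0/\bar f_n$ on the whole interval) are addressed.
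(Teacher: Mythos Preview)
Your proposal is correct and matches the paper's approach exactly: the paper introduces this corollary with the single sentence ``Setting $[a_n,b_n] = [-\tfrac{8}{5\alpha} \log n,\tfrac{8}{5\alpha} \log n]$ yields the following corollary,'' and provides no further proof. Your verification that the hypotheses of Proposition~\ref{prop:piecewise_approx} are satisfied and that the second term in the Hellinger bound is absorbed into the first is precisely the intended one-line specialization.
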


\section{Simulation study}\label{sec:simulation}

We present a simulation study to assess the performance of the proposed log-concave priors for density estimation. In particular, we investigate the prior based on the truncated stick breaking representation \eqref{prior: approx2}, firstly with deterministically chosen support $[a_n,b_n]$, secondly its empirical Bayes counterpart with support $[X_{(1)},X_{(n)}]$, where $X_{(1)}$ and $X_{(n)}$ denote the smallest and largest observations, respectively, and thirdly the hierarchical Bayes version where the parameters $a$ and $b-a$ are endowed with independent Cauchy and half-Cauchy distributions, respectively. In all cases we plot the posterior mean and $95\%$ pointwise credible sets, and compare them with the log-concave maximum likelihood estimator (computed using the R function ``mlelcd'').

Consider first the posterior distribution arising from the prior with deterministic support $[a_n,b_n]$. We have drawn random samples of size $n=50, 200, 500$ and $2500$ from a gamma distribution with shape and rate parameters 2 and 1, respectively. We took the number of linear pieces in the exponent of the prior to be $m=n^{1/5}\log n$, set $[a_n,b_n]=[-2.3\log n,2.3\log n]$, endowed the break-point parameters $\theta=(\theta_1,...,\theta_m)$ with independent uniform priors on $[0,b_n-a_n]$, assigned the weight parameters $p$ a stick-breaking distribution truncated at level $m$, and endowed $\gamma_1$ and  $\gamma_2$ a half Cauchy and a Cauchy distribution, respectively, with location parameter 0 and scale parameter 1. Since the posterior distribution does not have a closed-form expression, we drew approximate samples from the posterior using a random walk Metropolis-Hastings within Gibbs sampling algorithm for 10000 iterations out of which the first 5000 are discarded as burn-in period. In Figure \ref{fig: sample_size}, we have plotted the true distribution (solid red), posterior mean (solid blue), $95\%$ pointwise credible band (dashed blue) and the maximum likelihood estimator (solid green). The data is represented by a histogram on the figures.

We see that the posterior mean gives an adequate estimator for the true log-concave density with similar, if not superior, performance compared to the more jagged maximum likelihood estimator, and the $95\%$ pointwise credible bands mostly contain the true function except for points close to zero. We further investigate the frequentist coverage properties of the pointwise  Bayesian credible sets. We repeat the above experiment for the empirical Bayes procedure 100 times (each with 2000 iterations out of which half were discarded as burn in) and report the frequencies where the density $f(x)$ at given points $x\in\{0.5,1,1.5,2,2.5,3\}$ is inside of the corresponding credible interval. We consider sample sizes $n=50,200$ and $500$ and report the empirical coverage probabilities in Table \ref{tab: coverage}. One can see that in this particular example we get quite reliable uncertainty quantification, especially for larger sample sizes.  It should be noted, however, that the frequentist coverage of Bayesian credible sets is a delicate subject in nonparametric statistics, see for instance Szab\'o et al. \cite{szabo2015}, and is beyond the scope of this article.

\begin{table}[tbp]
\centering
\scalebox{0.9}{
 \setlength{\tabcolsep}{1pt}%
 \renewcommand{\arraystretch}{1.3}%
\begin{tabular}{|c|c|c|c|c|c|c|c|}
 \hline
n~\textbackslash~x&  \,\,\,\,0.5\,\,\,\,&\,\,\,\,\,\,1\,\,\,\,\,\,&\,\,\,\,1.5\,\,\,\,&\,\,\,\,\,\,2\,\,\,\,\,\,&\,\,\,\,2.5\,\,\,\,&\,\,\,\,\,\,3\,\,\,\,\,\, \\
 \hline
 \hline
50 & 0.51 & 0.81& 0.88& 0.85& 0.87& 0.93\\
 \hline
200 & 0.68& 0.97& 0.94& 0.87& 0.94& 0.97\\
 \hline
500& 0.83 & 0.96 & 0.88 & 0.87& 0.86& 0.91\\
 \hline
\end{tabular}}
\caption{Frequencies out of 100 experiments when the empirical Bayes credible set contained the true function values at points $x=0.5,1,...,3$. From top to bottom the sample size increases from $n=50$ until $n=500$.}
\label{tab: coverage}
\end{table}

\begin{figure}[h!]
\begin{center}
\includegraphics[width = 0.9\textwidth]{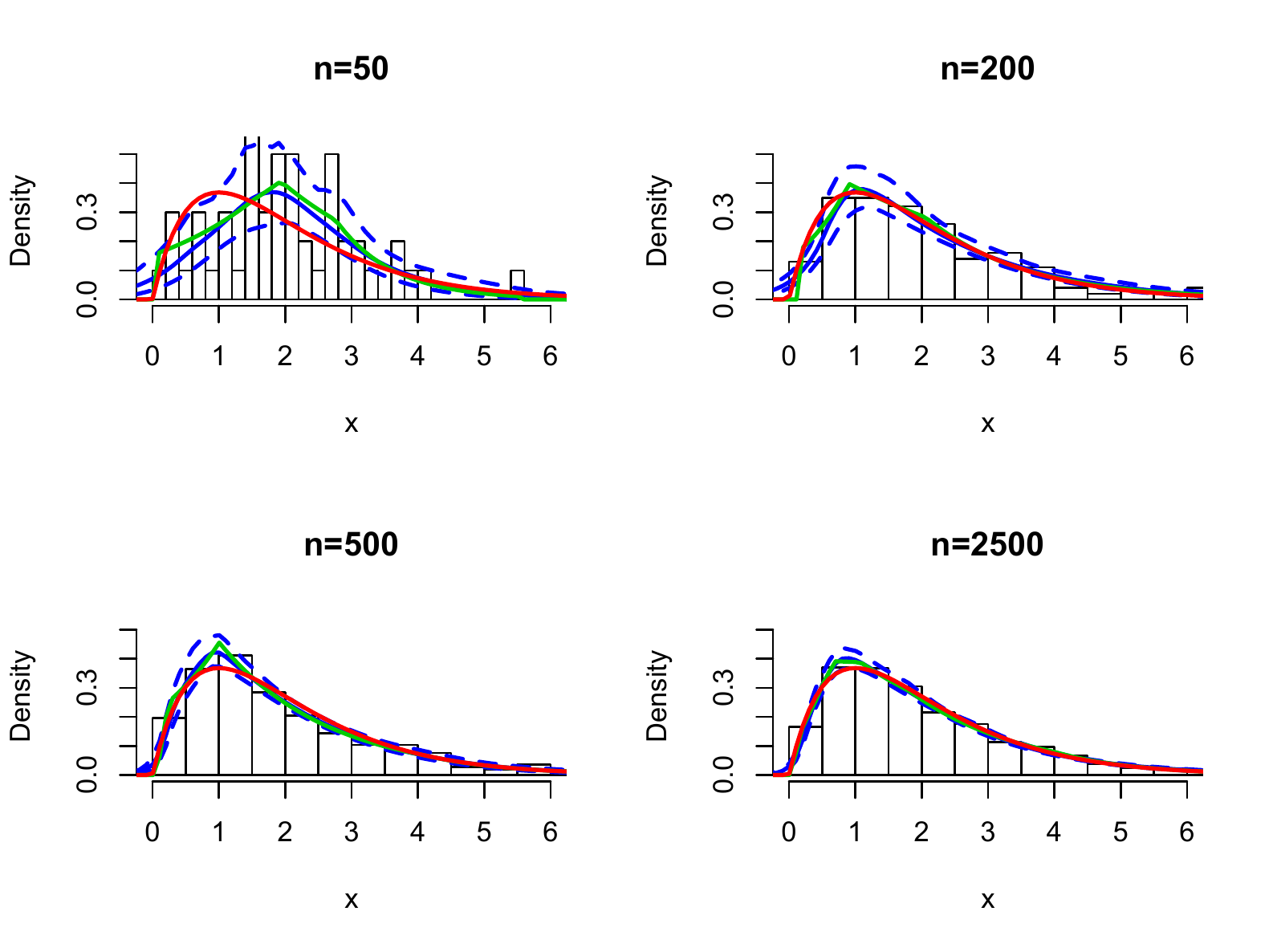} 
\caption{Prior with $[a_n,b_n]$ selected deterministically: the underlying Gamma(2,1) density function (red), posterior mean (solid blue), pointwise credible bands (dashed blue), maximum likelihood estimator (solid green) and data is represented with a histogram. We have increasing sample size from left to right and top to bottom $n=50, 200, 500$ and $2500$.}
\label{fig: sample_size}
\end{center}
\end{figure}

We next investigate the behaviour of the empirical and hierarchical Bayes versions of the proposed prior. We again simulate $n=50,200, 500$ and $2500$ independent draws from a Gamma(2,1) distribution and set the compact support of the prior densities to be $[a_n,b_n]=[X_{(1)},X_{(n)}]$, that is the smallest and largest observations, for the empirical Bayes procedure and $[a,b]$, with $a\sim Cauchy(0,1)$ and $b-a \sim Cauchy_+(0,1)$ independent, for the hierarchical Bayes method. As before we set $m=n^{1/5}\log n$ and endowed the parameters $\theta,p,\gamma_1$ and $\gamma_2$ with the same priors as above. We ran the algorithm again for 10000 iterations, taking the first half of the chain as a burn-in period. We plot the outcomes in Figures \ref{fig: sample_size_EB} and \ref{fig: sample_size_HB} for the empirical and hierarchical Bayes procedures, respectively. One can see that for $n\geq 500$ observations, the posterior mean (solid blue) closely resembles the underlying gamma density (solid red), while the fit is already reasonable for $n=200$. The pointwise $95\%$-credible bands contain the true density, even near zero, which was problematic in case of the prior with support selected deterministically. Comparing Figures \ref{fig: sample_size}, \ref{fig: sample_size_EB} and  \ref{fig: sample_size_HB}, we see that the empirical and hierarchical Bayes approaches of selecting the support $[a_n,b_n]$ in a data-driven way outperform a deterministic selection. We also note that the algorithm for the empirical Bayes method was considerably faster than the others due to the smaller support, which reduces the computation time of the normalizing constants $\int e^{w(y)}dy$ of the densities.

\begin{figure}[h!]
\begin{center}
\includegraphics[width = 0.9\textwidth]{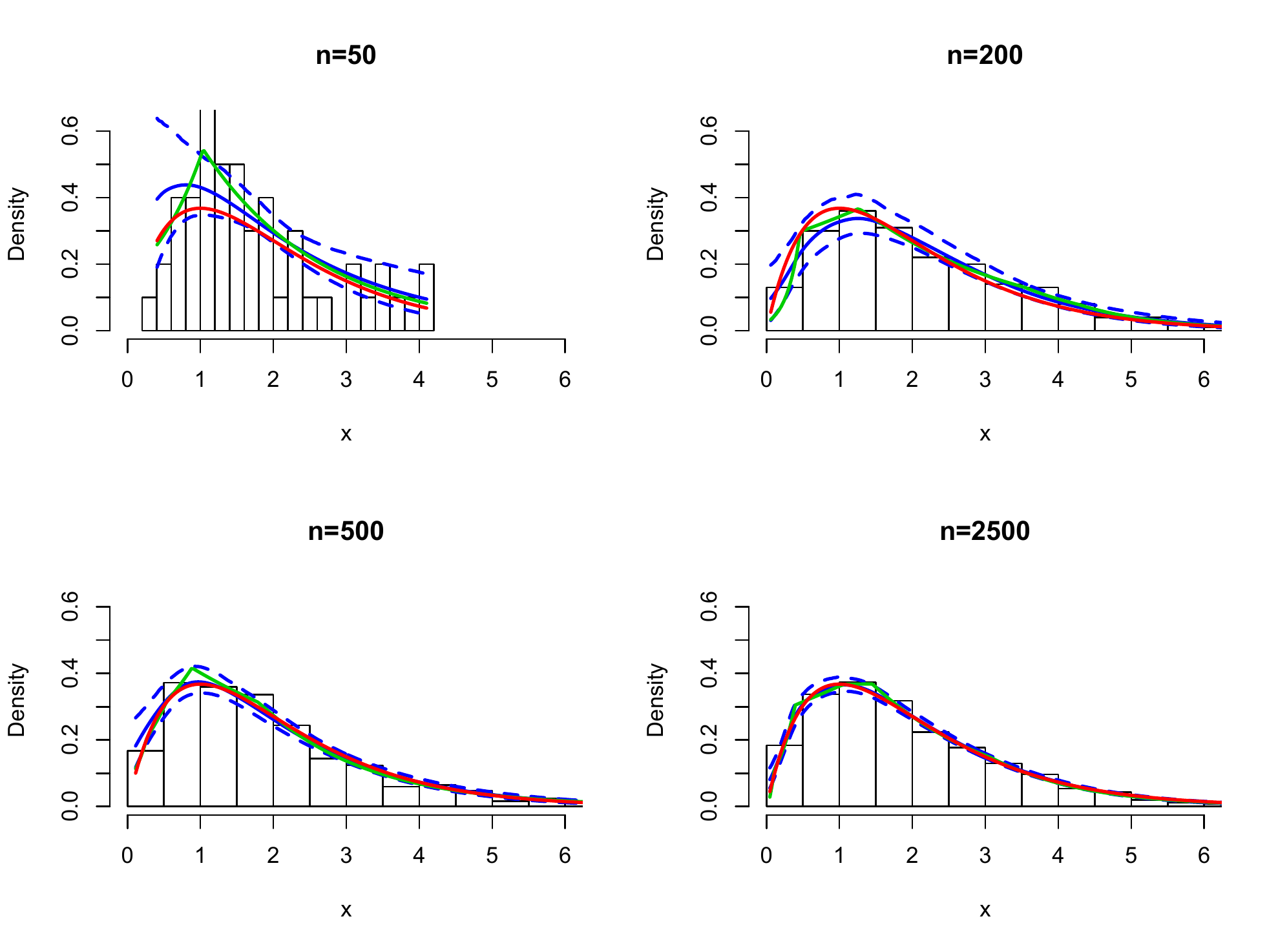} 
\caption{Empirical Bayes prior with data-driven support: the underlying Gamma(2,1) density function (red), posterior mean (solid blue), pointwise credible bands (dashed blue) and data is represented with a histogram. We have increasing sample size from left to right and top to bottom $n=50, 200, 500$ and $2500$.}
\label{fig: sample_size_EB}
\end{center}
\end{figure}

\begin{figure}[h!]
\begin{center}
\includegraphics[width = 0.9\textwidth]{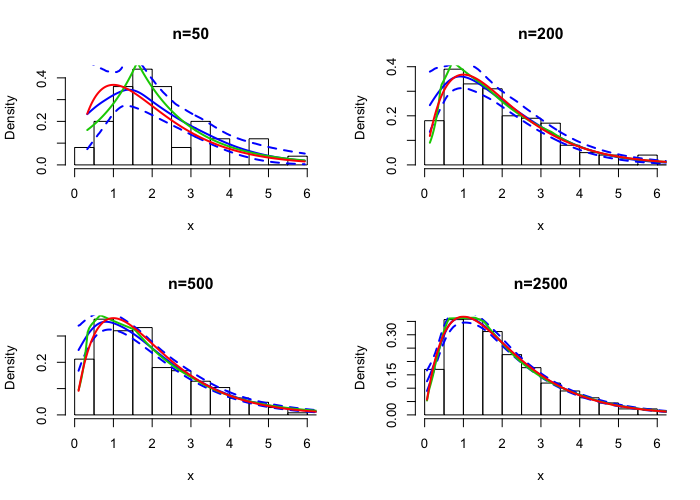} 
\caption{Hierarchical Bayes prior with data-driven support: the underlying Gamma(2,1) density function (red), posterior mean (solid blue), pointwise credible bands (dashed blue) and data is represented with a histogram. We have increasing sample size from left to right and top to bottom $n=50, 200, 500$ and $2500$.}
\label{fig: sample_size_HB}
\end{center}
\end{figure}

We then investigate the performance of the posterior distribution corresponding to the empirical and hierarchical Bayes  methods for recovering different log-concave densities and again compare them with the MLE. We have considered a standard normal distribution, a gamma distribution with shape parameter 2 and rate parameter 1, a beta distribution with shape parameters 2 and 3, and a Laplace distribution with location parameter 0 and dispersion parameter 1. In all four examples we have taken sample size $n=1500$.  The posterior mean (solid blue), the $95\%$ pointwise credible bands (dashed blue) and the MLE (green) are plotted in Figures \ref{fig: distributions_EB} and \ref{fig: distributions_HB} for the empirical and hierarchical Bayes procedures, respectively. All four subpictures for both data-driven methods show satisfactory results, both for recovery using the posterior mean and for uncertainty quantification using the pointwise credible bands. We note that the displayed plots convey typical behaviour and are representative of multiple simulations. We hence draw the conclusion that the proposed method seems to work well in practice for various choices of common log-concave densities.

\begin{figure}[h!]
\begin{center}
\includegraphics[width = 0.9\textwidth]{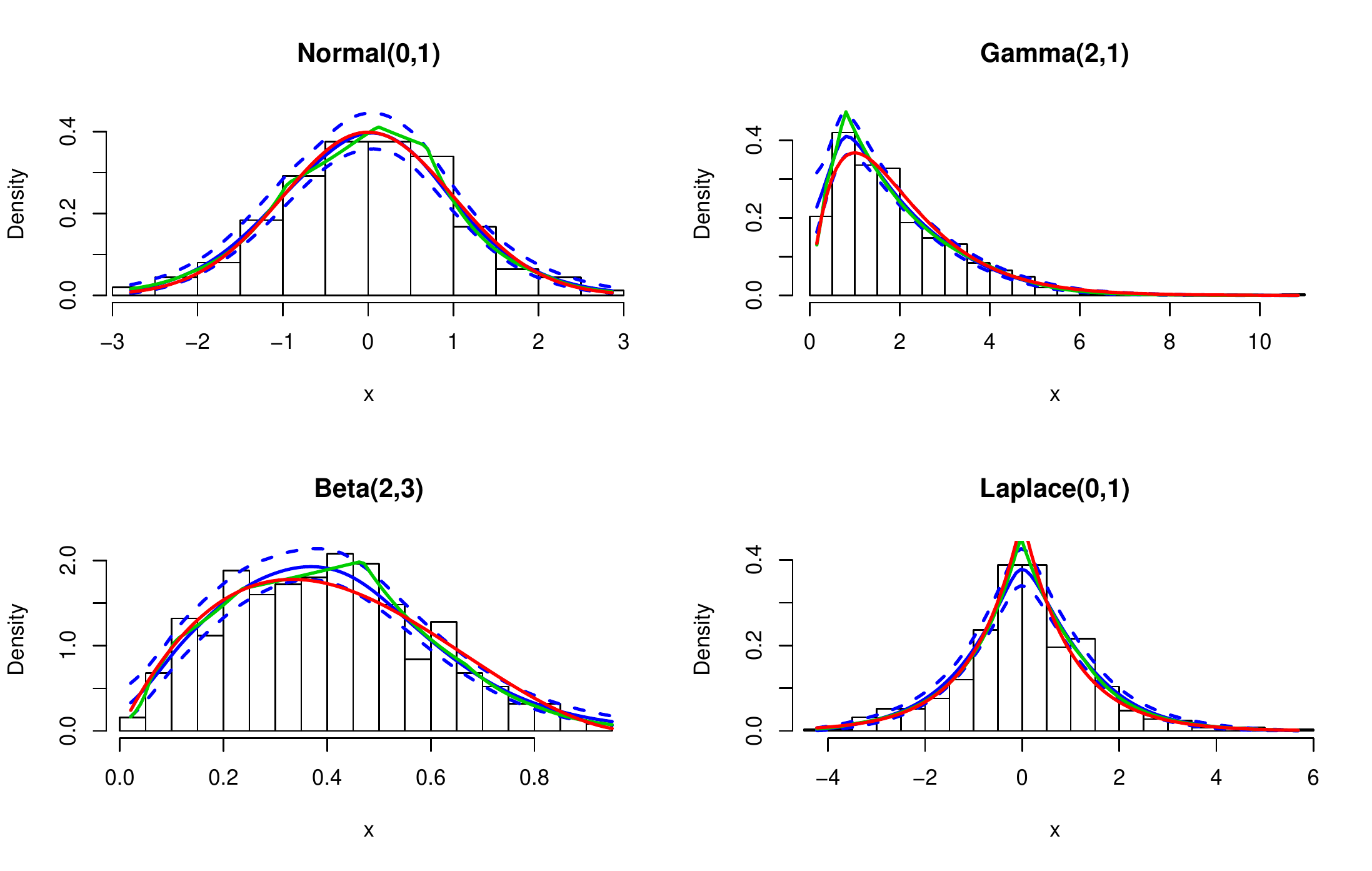} 
\caption{The underlying density function (red), empirical Bayes posterior mean (solid blue) and pointwise credible bands (dashed blue). The data is represented with a histogram. The true density functions are from left to right and top to bottom: standard Gaussian, Gamma(2,1), Beta(2,3) and Laplace(0,1).}
\label{fig: distributions_EB}
\end{center}
\end{figure}

\begin{figure}[h!]
\begin{center}
\includegraphics[width = 0.9\textwidth]{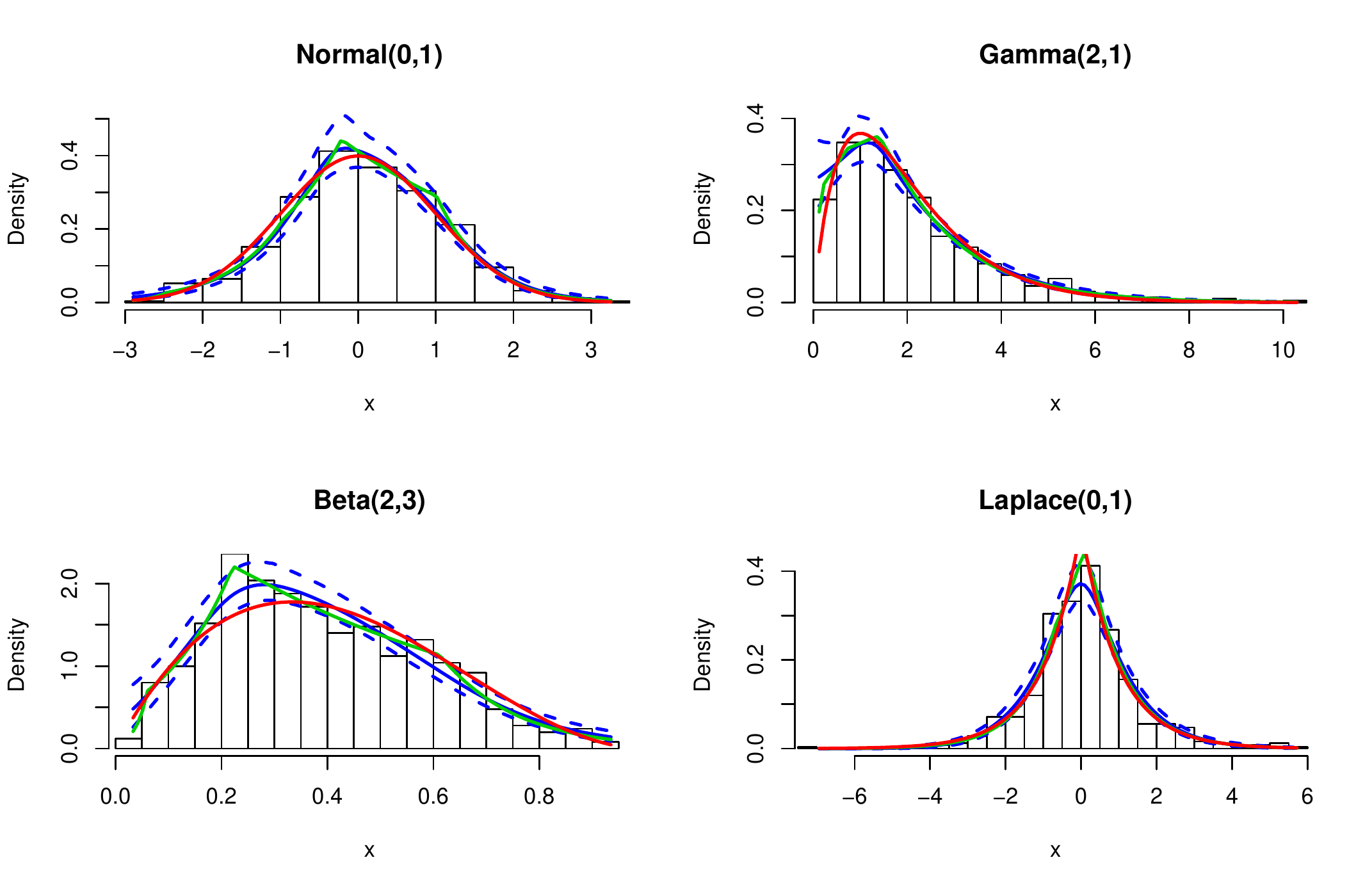} 
\caption{The underlying density function (red), hierarchical Bayes posterior mean (solid blue) and pointwise credible bands (dashed blue). The data is represented with a histogram. The true density functions are from left to right and top to bottom: standard Gaussian, Gamma(2,1), Beta(2,3) and Laplace(0,1).}
\label{fig: distributions_HB}
\end{center}
\end{figure}

Lastly, we investigate the performance of the proposed Bayesian procedures for estimating the mode of the underlying log-concave density. We consider the standard normal distribution and take i.i.d. random samples of size ranging from 50 to 20000. We run the Gibbs sampler for 20000 iterations and take the first half of the iterations as burn-in period. For each posterior draw we compute the mode and use the resulting histogram to approximate the one-dimensional marginal posterior. The histograms from the empirical Bayes procedure are displayed in Figure \ref{fig: mod_Gaussian}. One can see that the posterior concentrates around the true mode (i.e. 0) as the sample size increases. 

The marginal posterior concentrates substantially slower than $n^{-1/2}$-rate. This is as expected, since the best possible minimax rate for estimating the mode $m_0$ of a unimodal or log-concave density $f_0$ satisfying $f_0''(m_0)<0$ is $n^{-1/5}$, see \cite{Hasminskii1979,balabdaoui2009}. Indeed, the mode of the log-concave MLE attains this rate \cite{balabdaoui2009}. Interestingly, the marginal posterior does not seem to be Gaussian, which may be linked to the irregular asymptotic distribution of the mode of the log-concave MLE. This rather complicated distribution equals the mode of the second derivative of the lower invelope of a certain Gaussian process, see \cite{balabdaoui2009} for full details. A better understanding of the limiting shape of the marginal posterior would be interesting, but is beyond the scope of this article.

In the supplementary material we provide additional simulations for the marginal posterior for the mode from the empirical Bayes posterior for different underlying log-concave densities, namely the beta and gamma distributions. We also numerically investigate the applicability of our log-concave Bayesian prior for estimating mixtures of log-concave densities.

\begin{figure}[h!]
\begin{center}
\includegraphics[width = 0.9\textwidth]{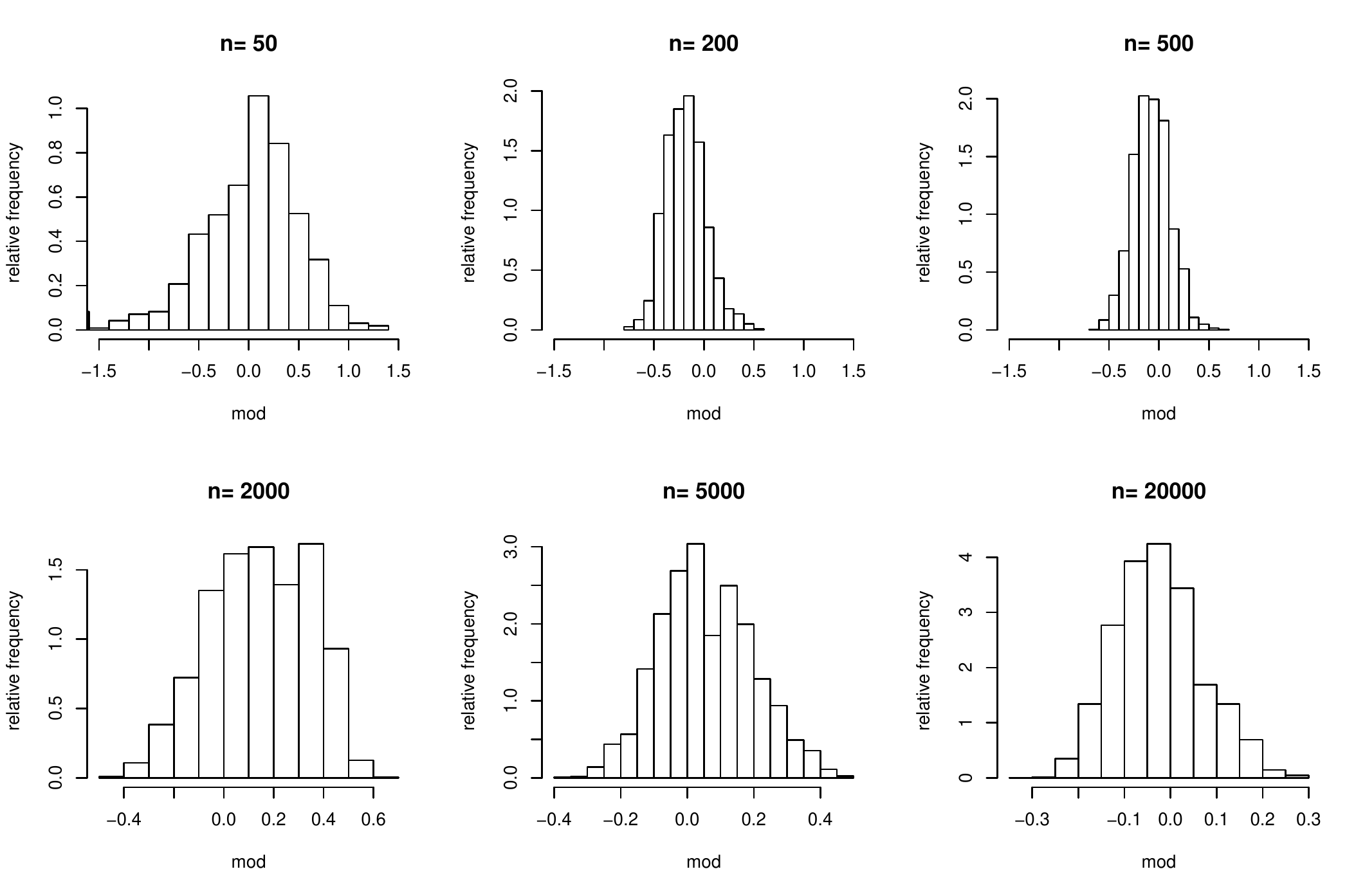} 
\caption{The empirical Bayes posterior distribution of the mode for standard normal distribution with increasing sample size from left to right and top to bottom, ranging between $n=50$ and $n=20000$.
.}
\label{fig: mod_Gaussian}
\end{center}
\end{figure}

\section{Discussion}\label{sec:discussion}
We have proposed a novel Bayesian procedure for log-concave density estimation. The prior is defined on compactly supported densities, where the support can be chosen either deterministically, empirically or using a fully Bayesian hierarchical procedure. We have shown theoretically that both the deterministic and fully Bayesian choices of the support give (near-)optimal posterior contraction rates, and have demonstrated the good small sample performance of the posterior for all three methods in a simulation study. We have also plotted the $95\%$ pointwise credible bands, which in our simulation study provide reliable frequentist uncertainty quantification. However, this might depend heavily on the choice of the underlying true density and it is unclear at present whether our methods generally provide trustworthy frequentist uncertainty quantification. The rigourous study of this question is beyond the scope of the present paper.

In our simulation study, we further investigated the behaviour of the marginal posterior for the mode functional. A natural next question is whether one can obtain semiparametric Bernstein-von Mises type results for the mode. In view of the irregular behaviour of the log-concave MLE, this is an interesting problem as it is unclear whether the limiting distribution of the posterior is indeed Gaussian.

A possible application of our proposed approach is clustering based on mixture models. Assuming that clusters have log-concave densities instead of (say) Gaussians broadens their modelling flexibility. We have executed a small simulation study to explore this direction. For simplicity we have considered a mixture of only two log-concave densities and modified our prior accordingly. In the considered examples (see the Supplementary material) our procedure performs reasonably well. However, we should note that the computational time is much worse than using simple Gaussian kernels. Extending this to mixtures with more than two (possibly unknown number of) components seems to be possible, but requires optimization of the Gibbs sampler and perhaps introducing other approximation steps, which are beyond the scope of the present paper.

Another natural question if whether one can extend these results to multivariate density estimation, especially in view of the difficulty of computing the log-concave MLE in higher dimensions. Since our present prior construction is based on using a mixture representation to model a decreasing function, which corresponds to the derivative of the concave exponent, this will require new ideas. A possible approach is presented in Hannah and Dunson \cite{hannah2011}, who place a prior over all functions that are the maximum of a set of hyperplanes. This yields a prior on the set of convex functions that could potentially be adapted to the multivariate log-concave setting.

\section{Proofs}\label{sec:main proofs}

Define the following classes of log-concave densities with mean and variance restrictions:
\begin{equation*}
\overline{\F}^{\xi,\eta} = \bigg\{ f\in \F: \mu_f := \int xf(x) dx = \xi, \quad \sigma_f^2 := \int (x-\mu_f)^2 f(x) dx = \eta \bigg\}
\end{equation*}
and
\begin{equation*}
\widetilde{\F}^{\xi,\eta} = \{ f \in \F: |\mu_f| \leq \xi , \quad |\sigma_f^2 - 1| \leq \eta \}.
\end{equation*}
Let $\hat{f}_n$ denote the log-concave MLE based on i.i.d. random variables $X_1,\dots,X_n$ arising from a density $f_0 \in \F$.

The proof of Theorem \ref{thm:general contraction} relies on a concentration inequality for the log-concave MLE based on data from moment-restricted densities. This is the content of the following lemma, whose proof is essentially contained in Kim and Samworth \cite{kim2016} for the more difficult case of general $d\geq 1$. However, we require a sharper probability bound than they provide and so make some minor modifications to their argument. The proof is deferred to Section \ref{sec:tech_remain}.

\begin{lemma}\label{lem:mle_exp_ineq}
For every $\varepsilon >0$, there exist positive constants $L_0,C,c,n_0$, depending only on $\eps$, and positive universal constants $D,d>0$, such that for all $L\geq L_0$ and $n \geq n_0$,
\begin{equation*}
\sup_{g_0 \in \overline{\F}^{0,1}} P_{g_0}^n \Big( h(\hat{g}_n,g_0) \geq Ln^{-2/5} \Big) \leq  C \exp \left( -  c n^{1/(4+2\eps)} \right) + D \exp \left( -  dL^2 n^{1/5}\right),
\end{equation*}
where $\hat{g}_n$ denotes the log-concave maximum likelihood estimator based on an i.i.d. sample $Z_1,...,Z_n$ from $g_0$.
\end{lemma}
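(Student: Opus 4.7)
The plan is to follow the argument of Kim and Samworth \cite{kim2016}, but replace the polynomial tail bound for the MLE with a two-stage exponential bound by a more careful envelope/truncation argument. By assumption $g_0$ has zero mean and unit variance, so the affine equivariance reduction is automatic and we can work directly with the class $\overline{\F}^{0,1}$.

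First I would construct a ``good event'' $A_n$ on which the MLE $\hat g_n$ lies in a deterministic restricted class of the form $\F_{\alpha_0,\beta_0}$ for constants $\alpha_0,\beta_0$ depending only on $\eps$, and additionally satisfies mild moment control of the form $|\mu_{\hat g_n}|+|\sigma^2_{\hat g_n}-1|\leq n^{-\kappa}$ for a suitable $\kappa>0$. Such an event can be manufactured from two facts: (i) uniformly over $g_0\in \overline{\F}^{0,1}$, the empirical mean and variance concentrate around $(0,1)$ with sub-exponential rate, and (ii) the MLE matches these empirical moments up to lower order. The probability of $A_n^c$ is controlled by a Bernstein-type bound on truncated moments; truncating the data at level $n^{1/(2(4+2\eps))}$ and paying the tail cost $e^{-\alpha_0 n^{1/(2(4+2\eps))}}$ from the envelope $g_0(x)\lesssim e^{-\alpha_0|x|}$ yields exactly the first term $C\exp(-c n^{1/(4+2\eps)})$ in the stated bound (the exponent $1/(4+2\eps)$ is the natural rate coming from optimizing truncation level against tail mass for sub-exponential envelopes).

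On $A_n$ I would apply the standard empirical process machinery for MLEs. The basic inequality gives $h^2(\hat g_n,g_0)\lesssim (\PP_n-P_{g_0})\bigl[\tfrac12\log\tfrac{\hat g_n+g_0}{2 g_0}\bigr]$, after which one controls the supremum of the centered empirical process indexed by $\{\tfrac12 \log\tfrac{g+g_0}{2g_0}: g\in \F_{\alpha_0,\beta_0},\,h(g,g_0)\leq\delta\}$. Here the key input is the local bracketing-entropy bound of Theorem~4 of \cite{kim2016}, which yields $\log N_{[\,]}(u,\F_{\alpha_0,\beta_0}\cap\{h(\cdot,g_0)\leq\delta\},h)\lesssim u^{-1/2}$ up to logarithmic factors. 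A single application of van de Geer's exponential inequality for empirical processes (Theorem 5.11 of \cite{vandegeer2000}) combined with a dyadic peeling over shells $\{2^k Ln^{-2/5}\leq h(\hat g_n,g_0)<2^{k+1}Ln^{-2/5}\}$, $k\geq 0$, converts the $n^{-2/5}$ entropy integral rate into a concentration bound of the form $\exp(-d L^2 n^{1/5})$ summed geometrically in $k$, giving the second term.

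The main obstacle is the interplay between the two pieces: the entropy bound used in the second step is only valid on the restricted class $\F_{\alpha_0,\beta_0}\cap \widetilde{\F}^{\xi_n,\eta_n}$, so the good event $A_n$ must simultaneously pin down both the support-like parameters $(\alpha_0,\beta_0)$ of $\hat g_n$ and the near-matching of its first two empirical moments; Kim and Samworth's argument proves a qualitative version of this but settles for a polynomial probability, whereas we need the sub-exponential rate $n^{1/(4+2\eps)}$. The remedy is to choose the truncation level for the Bernstein step as a fractional power of $n$ rather than $\log n$, which costs the factor $(4+2\eps)$ in the exponent but preserves applicability of the entropy bound, and then verify that this coarser localization still permits the empirical process argument to deliver the sharp $L^2 n^{1/5}$ exponent. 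The remaining steps are bookkeeping.
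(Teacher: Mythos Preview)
Your two-step decomposition (good event where the MLE has controlled moments, then empirical-process bound on the good event) is exactly the paper's approach. The paper does not redo the peeling: it quotes the bound $\sup_{g_0\in\overline{\F}^{0,1}}P_{g_0}^n(\{h(\hat g_n,g_0)\geq Ln^{-2/5}\}\cap\{\hat g_n\in\widetilde\F^{1,\eta}\})\leq D\exp(-dL^2n^{1/5})$ directly from \cite{kim2016} and devotes the entire proof to the complement $P_{g_0}^n(\hat g_n\notin\widetilde\F^{1,\eta})$. For that term it uses an Orlicz-norm exponential inequality (Theorem~6.21 of Ledoux--Talagrand) for sums of $|Z_i|^{2+\eps}$ rather than truncation plus Bernstein; your route also works, but the truncation level must be $M\asymp n^{1/(4+2\eps)}$, not $n^{1/(2(4+2\eps))}$ --- with your stated level the tail cost is only $e^{-cn^{1/(8+4\eps)}}$, which is too weak.

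One step in your sketch is too quick. Claim (ii), that the MLE ``matches the empirical moments up to lower order'', holds for the mean and for the \emph{upper} bound on the variance (Remark~2.3 of \cite{dumbgen2011} gives $\sigma^2_{\hat g_n}\leq\widetilde\sigma_n^2$), but there is no direct lower bound of this type. The paper, following the proof of Lemma~6 in \cite{kim2016}, handles $P(\sigma^2_{\hat g_n}<1-\eta)$ indirectly: it introduces the class $\overline{\mathcal P}_\eps^{1/10,1/2}$ of probability measures with controlled mean, variance \emph{and} $(2+\eps)$-th absolute moment, and invokes a uniform-integrability/compactness argument to show that $\PP_n\in\overline{\mathcal P}_\eps^{1/10,1/2}$ forces $\sigma^2_{\hat g_n}\geq 1-\eta$ for some $\eta=\eta(\eps)$. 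It is precisely the control of the empirical $(2+\eps)$-moment that generates the exponent $n^{1/(4+2\eps)}$; the variance lower bound for $\hat g_n$ does not follow from concentration of the empirical mean and variance alone.
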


\begin{proof}[Proof of Theorem \ref{thm:general contraction}]
As in the proof of Theorem 2.1 of \cite{ghosal2000}, using the lower bound on the small ball probability from \eqref{eq:small ball}, it suffices to construct tests $\phi_n = \phi_n (X_1,...,X_n; f_0)$ such that
\begin{equation*}
P_{f_0}^n \phi_n \rightarrow 0, \quad \quad \textnormal{ and } \quad \quad \sup_{f \in \F: h(f,f_0)\geq M \varepsilon_n} P_f^n (1-\phi_n) \leq e^{-(C+4)n\varepsilon_n^2}
\end{equation*}
for $n$ large enough, where the constant $C>0$ matches that in \eqref{eq:small ball}.

For $M_0$ a constant to be chosen below, set $\phi_n = \1 \{h(\hat{f}_n,f_0) \geq M_0 \varepsilon_n \}$, where $\hat{f}_n$ is the log-concave MLE based on i.i.d. observations $X_1,...,X_n$ from a density $f_0 \in \F$. Let $\mu_{f_0} =\E X_i$, $\sigma_{f_0}^2 = \var (X_i)$ and define $Z_i = (X_i-\mu_{f_0})/\sigma_{f_0}$, so that $\E Z_i = 0$ and $\var (Z_i)=1$. Further set $g_0(z) = \sigma_{f_0} f_0 (\sigma_{f_0}z + \mu_{f_0})$ and $\hat{g}_n(z) = \sigma_{f_0} \hat{f}_n (\sigma_{f_0}z + \mu_{f_0})$, so $g_0 \in \overline{\F}^{0,1}$. By affine equivariance (Remark 2.4 of \cite{dumbgen2011}), $\hat{g}_n$ is the log-concave maximum likelihood estimator of $g_0$ based on $Z_1,...,Z_n$.

Using the invariance of the Hellinger distance under affine transformations and Lemma \ref{lem:mle_exp_ineq} with $\eps = 1/2$, the type-I error satisfies
\begin{equation*}
P_{f_0}^n \phi_n = P_{g_0}^n (h(\hat{g}_n,g_0) \geq M_0 \varepsilon_n)\leq P_{g_0}^n (h(\hat{g}_n,g_0) \geq L_0 n^{-2/5}) \leq Ce^{-cn^{1/5}} \rightarrow 0
\end{equation*}
as $n\rightarrow \infty$ for $M_0$ large enough since $\eps_n \gtrsim n^{-2/5}$. For $f\in \F$ such that $h(f,f_0) \geq M \varepsilon_n$,
\begin{align*}
P_f^n(1-\phi_n) & = P_f^n (h(f_0,\hat{f}_n) < M_0 \varepsilon_n) \\
& \leq P_f^n (h(f_0,f) - h(f,\hat{f}_n) < M_0 \varepsilon_n)\\
& \leq P_f^n ((M-M_0)\varepsilon_n < h(f,\hat{f}_n)).
\end{align*}
Since $\eps_n \lesssim n^{-3/8-\rho}$ implies $\eps_n \lesssim n^{-3/8-\rho'}$ for any $0<\rho'\leq \rho$, we may take $\rho>0$ arbitrarily small. Applying Lemma \ref{lem:mle_exp_ineq} with $\eps(\rho)>0$ to be chosen below and $L_n= (M-M_0)\eps_n n^{2/5}$, which satisfies $L_n \geq L_0$ for $M>0$ large enough since $\eps_n \gtrsim n^{-2/5}$, yields
\begin{align*}
\sup_{f \in \F:\, h(f,f_0)\geq M \varepsilon_n} P_f^n(1-\phi_n) &\leq \sup_{g\in \overline{\F}^{0,1}} P_g^n (h(\hat{g}_n,g) > (M-M_0)\varepsilon_n)\\
&=  \sup_{g\in \overline{\F}^{0,1}} P_g^n (h(\hat{g}_n,g) > L_n n^{-2/5})\\
& \leq C(\rho)\exp \left( -c(\rho) n^\frac{1}{4+2\eps(\rho)}\right) + D \exp \left(-dL_n^2 n^{1/5} \right)
\end{align*}
for all $n \geq n_0(\rho)$. Since $\eps_n\lesssim n^{-3/8-\rho}$ by assumption, it follows that $n\eps_n^2 \lesssim  n^{1/4 -2\rho} = o(n^{1/(4+2\eps(\rho))})$ for $\eps(\rho)>0$ small enough. Therefore,
\begin{align*}
\sup_{f \in \F:\, h(f,f_0)\geq M \varepsilon_n} P_f^n(1-\phi_n) & \leq  \left( 1 + o(1) \right) D e^{-d(M-M_0)^2 n\eps_n^2}.
\end{align*}
Since we can make the constant in the exponent arbitrarily large by taking $M>0$ large enough, this completes the proof.
\end{proof}

\begin{proof}[Proof of Theorem \ref{thm: contraction}]
Let $f_0 \in \F_{\alpha,\beta}$ for some $\alpha>0$ and $\beta \in \R$. We may restrict to a suitable compactly supported density approximating $f_0$ using the first paragraph of the proof of Theorem 2 of Ghosal and van der Vaart \cite{ghosal2007}. For completeness we reproduce their argument in this paragraph. Let $\psi_n(x) = \1_{[-t_n,t_n]}(x)$ for $t_n=a' \log n$ for some $a'>\alpha^{-1}$. Define new observations $\bar{X}_1,...,\bar{X}_{\bar{n}}$ from the original observations $X_1,...,X_n$ by rejecting each $X_i$ independently with probability $1-\psi_n (X_i)$. Since $P_{f_0}[-t_n,t_n]^c \leq 2e^\beta \alpha^{-1} e^{-\alpha t_n} = o(n^{-1})$, the probability that at least one of the $X_i$'s is rejected is $o(1)$ and so the posterior based on the original and modified observations are the same with $P_{f_0}^n$-probability tending to one. Since posterior contraction is defined via convergence in $P_{f_0}^n$-probability, this implies that the posterior contraction rates are the same. The new observations come from a density $f_{0,n}$ that is proportional to $f_0 \psi_n$, which is log-concave and upper semi-continuous. Since $|1-\int f_0 \psi_n| \leq P_{f_0}[-t_n,t_n]^c = o(n^{-1})$,
\begin{equation}
\begin{split}
h^2(f_0,f_{0,n}) & \leq 2\int_\R f_0 \Bigg( 1-\frac{1}{\sqrt{\int f_0\psi_n}} \Bigg)^2 dx + \frac{2}{\int f_0 \psi_n}\int_\R f_0 (1-\sqrt{\psi_n})^2 dx \\
& \leq  2\frac{(\int f_0 \psi_n - 1)^2}{\int f_0 \psi_n} + \frac{2}{\int f_0 \psi_n}\int_{\R\backslash [-t_n,t_n]} f_0  dx  = o(n^{-1}).
\end{split}
\label{eq:log concave compact support approx}
\end{equation}
It therefore suffices to establish contraction for the posterior based on the new observations about the density $f_{0,n} = f_0\psi_n/\int f_0\psi_n$.

Under the assumed conditions on $(b_n-a_n)$, $\eps_n$ given by \eqref{def: eps} satisfies $n^{-2/5} \lesssim \eps_n \lesssim n^{-3/8-\rho}$ for some $\rho>0$ small enough. We thus apply Theorem \ref{thm:general contraction} so that we need only show the small-ball probability \eqref{eq:small ball}. Note that $f_{0,n}(x) \leq e^{\beta-\alpha|x|}(1+o(n^{-1}))$, so that $f_{0,n} \in \F_{\alpha,2\beta}$ for $n$ large enough. Since $-a_n,b_n \gg \log n$ and $b_n-a_n = o(n^{4/5})$, we may construct an approximation $\bar{f}_n$ of $f_{0,n}$ based on the interval $[a_n,b_n]$ for $n$ large enough using Proposition \ref{prop:piecewise_approx}. By Lemma 8 of \cite{ghosal2007},
\begin{align*}
\int_\R f_{0,n} \Big( \log \frac{f_{0,n}}{f_W} \Big)^k & \lesssim \big(h^2 (f_{0,n},\bar{f}_n)+h^2(\bar{f}_n, f_W) \big) \\
& \quad \times \left(1 + \log \left\| \frac{f_{0,n}}{\bar{f}_n}\right\|_{L^\infty([a_n,b_n])} + \log \left\| \frac{\bar{f}_n}{f_W}\right\|_{L^\infty([a_n,b_n])} \right)^k
\end{align*}
for $k=1,2$. By Proposition \ref{prop:piecewise_approx}(i) and (iv), the first term in the first bracket and the second term in the second bracket are $O((\log n)^2 n^{-4/5} + (b_n-a_n)^2 n^{-8/5})$ and $O(1)$ respectively.

By Proposition \ref{prop:piecewise_approx}(v), $\bar{f}_n$ has representation
\begin{align}
\bar{f}_n(x)= \exp \left( \bar{\gamma}_1\sum_{i=1}^{\bar{N}}\frac{z_i \wedge (x-a_n)}{z_i} \bar{p}_i-\bar{\gamma}_2 (x-a_n)+\bar{\gamma}_3 \right) \1_{[a_n,b_n]}(x) ,\label{eq:approx_mixture_rep}
\end{align}
where $(z_i)_{i=1}^{\bar{N}} \subset [0,b_n-a_n]$ are the knots written in increasing order, $\bar{N} = \bar{N}_n =O(n^{1/5}\log n)$ and $\sum_{i=1}^{\bar{N}}\bar{p}_i = 1$. Let $\bar{w}_n (x)= (\log \bar{f}_n (x)- \bar{\gamma}_3)\1_{[a_n,b_n]}(x) - \infty \1_{\R \backslash [a_n,b_n]}(x)$ so that $\bar{f}_n = f_{\bar{w}_n}$ using the transformation \eqref{eq:density_transform}. We may thus without loss of generality take $\bar{\gamma}_3=0$ since it is contained in the normalization \eqref{eq:density_transform}.

Suppose that $f_w$ is a (log-concave) density with support equal to $[a_n,b_n]$ and such that $\| \bar{w}_n - w\|_{L^\infty([a_n,b_n])} \leq c\varepsilon_n$. Since $\int e^w = e^{O(\varepsilon_n)} \int e^{\bar{w}_n}$, it follows that for $x\in [a_n,b_n]$, $\bar{f}_n(x)/f_w(x) \leq e^{O(\varepsilon_n)} =e^{o(1)}$. Since $h^2(\bar{f}_n,f_w)\lesssim \eps_n^2$ by Lemma 3.1 of \cite{vandervaart2008}, we can conclude that
\begin{align}
\{ w: \| \bar{w}_n - w\|_{L^\infty([a_n,b_n])} \leq c \varepsilon_n \} \subset \{ w: K(f_{0,n},f_w)\leq \varepsilon_n^2, \ V(f_{0,n},f_w) \leq \varepsilon_n^2 \}\label{eq: KL_to_L_inf}
\end{align}
for some $c>0$. It therefore suffices to lower bound the prior probability of the left-hand set.

Fix $\delta>0$ to be chosen sufficiently large below. Since the $z_i$ are $n^{-6/5}$-separated by Proposition \ref{prop:piecewise_approx}(iii), we can find a collection of disjoint intervals $(U_i)_{i=1}^{\bar{N}}$ in $[a_n,b_n]$ with Lebesgue measure $\lambda(U_i) = \varepsilon_n^\delta$ and such that $z_i \in U_i$ for $i=1,...,\bar{N}$. Further denote $U_0 := \R\backslash \cup_{i=1}^{M_n} U_i$. Let $W$ be a prior draw of the form \eqref{eq:prior_mixture_rep} with parameters $\gamma_1,\gamma_2$ and $P$.  Writing $p_i = P(U_i)$, $\bar{p}_0=0$ and using the triangle inequality, for any $x\in[a_n,b_n]$,
\begin{align}
|\bar{w}_n(x) - W(x)| & = \left| \bar{\gamma}_1 \sum_{i=1}^{\bar{N}} \frac{z_i \wedge (x-a_n)}{z_i} \bar{p}_i - \bar{\gamma}_2 (x-a_n) -  \gamma_1\int_0^\infty \frac{\theta \wedge (x-a_n)}{\theta}dP(\theta) - \gamma_2(x-a_n) \right| \nonumber\\
& \leq |\bar{\gamma}_1-\gamma_1| \int_0^\infty \frac{\theta \wedge (x-a_n)}{\theta}dP(\theta)+ \bar{\gamma}_1 \left| \int_{U_0} \frac{\theta\wedge (x-a_n)}{\theta}dP(\theta) \right| \nonumber\\
& \quad \quad+ \bar{\gamma}_1 \left| \sum_{i=1}^{\bar{N}} \int_{U_i} \frac{\theta \wedge (x-a_n)}{\theta}dP(\theta) - \sum_{i=1}^{\bar{N}} \frac{z_i \wedge (x-a_n)}{z_i} p_i  \right|  \nonumber\\
& \quad \quad + \bar{\gamma}_1 \left| \sum_{i=1}^{\bar{N}}\frac{z_i \wedge (x-a_n)}{z_i} (p_i -\bar{p}_i)  \right|  + (b_n-a_n)|\bar{\gamma}_2-\gamma_2|  \nonumber\\
& \leq |\bar{\gamma}_1-\gamma_1| + \bar{\gamma}_1 \sum_{i=1}^{\bar{N}} \sup_{\theta\in U_i} \frac{|\theta-z_i|}{\theta \wedge z_i} p_i + \bar{\gamma}_1 p_0 + \bar{\gamma}_1 \sum_{i=1}^{\bar{N}} |p_i - \bar{p}_i| + (b_n-a_n)|\bar{\gamma}_2-\gamma_2| \nonumber \\
& \leq |\bar{\gamma}_1-\gamma_1| + 2\bar{\gamma}_1 \sum_{i=1}^{\bar{N}} \frac{\lambda (U_i)}{ z_i} p_i +  \bar{\gamma}_1 \sum_{i=0}^{\bar{N}}|p_i - \bar{p}_i| + (b_n-a_n)|\bar{\gamma}_2-\gamma_2|,\label{eq: Linf_smallball}
\end{align}
where we have used in the second to last line that the maximal distance between the (piecewise) lines $(y\wedge a)/a$ and $(y\wedge b)/b$ occurs at $y=a\wedge b$ and in the last line that $\theta>z_i/2$ for all $\theta\in U_i$ for a sufficiently large choice of the parameter $\delta>0$. By Proposition \ref{prop:piecewise_approx}(v), we have $\bar\gamma_1 \leq 2 n^{4/5} (b_n-a_n)\lesssim n^{(4+\mu)/5}$. Furthermore, by the separation of the knots, $z_i \geq z_1 \geq  cn^{-6/5}$, $i=1,...,\bar{N}$, and so by the assumptions on the $(U_i)$, the second term is bounded by $2c^{-1}\bar\gamma_1 n^{6/5} \varepsilon_n^\delta \leq \tilde{c}\varepsilon_n$ for some $\delta,\tilde{c}>0$ large enough. 

The remaining three terms are independent under the prior and so can be dealt with separately. By the assumptions on the base measure of the Dirichlet process, we have that $\sum_{i=0}^{\bar{N}} H(U_i)\leq  H(\mathbb{R}^+)$ and $H(U_i)= H(\mathbb{R}^+)\bar{H}(U_i)\gtrsim \lambda(U_i)/(b_n-a_n)^\eta \geq \varepsilon_n^{\delta}/(b_n-a_n)^\eta \geq \varepsilon_n^{\delta'}$ for $i=1,...,\bar{N}$ and some $\delta'>\delta$. For $i=0$, note that $\lambda (U_0) \geq (b_n-a_n)-\bar{N} \varepsilon_n^\delta \gtrsim 1$. Using the lower the bounds for the $\lambda(U_i)$, which come from the polynomial separation of the knots in Proposition \ref{prop:piecewise_approx}(iii), we can apply Lemma 10 of \cite{ghosal2007} to get
\begin{align}
\Pi_n\bigg(\bar{\gamma}_1 \sum_{i=0}^{\bar{N}} |p_i - \bar{p}_i|\leq \varepsilon_n\bigg)\gtrsim e^{-c\bar{N}\log (2\bar{\gamma}_1/\varepsilon_n)} \gtrsim e^{-c' n\varepsilon_n^2}.\label{eq: small_ball_p}
\end{align}
From the tail assumption on the density of $\gamma_1$ and the upper bound on $\bar{\gamma}_1$, we have
\begin{align}
\Pi_n\big(|\gamma_1-\bar{\gamma}_1|\leq \varepsilon_n\big)\gtrsim \varepsilon_n e^{-c(\bar{\gamma}_1+\varepsilon_n)^{1/(4+\mu)}}\geq  e^{-c' n^{1/5}}\geq e^{-n\varepsilon_n^2}.\label{eq: small_ball_g1}
\end{align}
By Proposition \ref{prop:piecewise_approx}(v), $|\bar\gamma_2|\lesssim n^{4/5}$, which, combined with the tail bound on the density of $\gamma_2$, yields
\begin{align}
\Pi_n((b_n-a_n)|\gamma_2-\bar{\gamma}_2|\leq \varepsilon_n) \gtrsim \tfrac{\varepsilon_n}{b_n-a_n} e^{-c(|\bar{\gamma}_2|+\varepsilon_n/(b_n-a_n))^{1/4}}\geq e^{-c' n^{1/5}}\geq e^{-n\varepsilon_n^2}\label{eq: small_ball_g2}
\end{align}
since $\varepsilon_n /(b_n-a_n) \rightarrow 0$ no faster than polynomially in $n$. Combining the above, we have that $\Pi_n (\|\bar{w}_n-W\|_{L^\infty([a_n,b_n])} \leq (3+\tilde{c})\varepsilon_n) \geq e^{-(2+c')n\varepsilon_n^2}.$
\end{proof}

\begin{proof}[Proof of Theorem \ref{thm: hier}]
Since the proof follows that of Theorem \ref{thm: contraction}, we only specify the details where the present proof differs. Using the same arguments, we restrict to studying posterior contraction based on observations arising from the log-concave density $f_{0,n} = f_0\psi_n/\int f_0\psi_n$ for $\psi_n(x) = \1_{[-t_n,t_n]}(x)$ with $t_n=a' \log n$ for some $a'>\alpha^{-1}$. We again apply Theorem \ref{thm:general contraction} so that we only need to show the small-ball probability \eqref{eq:small ball} for $\eps_n = (\log n)n^{-2/5}$.

Writing $\Pi_{a,b}$ for the prior conditional on $(a,b)$ and setting $\Delta_n = \{(a,b): -3t_n \leq a \leq -2t_n, 2t_n \leq b \leq 3t_n\}$, the small ball probability in \eqref{eq:small ball} is lower bounded by
\begin{equation}\label{eq:HB_lower_bd}
\begin{split}
& \int_{\Delta_n} \Pi_{a,b}\left( K(f_{0,n},f_W)\leq \varepsilon_n^2, \ V(f_{0,n},f_W) \leq \varepsilon_n^2 \right) \pi(a,b) da\, db \\
& \quad \geq \inf_{(a,b)\in\Delta_n} \Pi_{a,b}\left( K(f_{0,n},f_W)\leq \varepsilon_n^2, \ V(f_{0,n},f_W) \leq \varepsilon_n^2 \right) \times \int_{\Delta_n} \pi(a,b) da\, db.
\end{split}
\end{equation}
Using the lower bound assumption \eqref{eq:hier_cond} on $\pi(a,b)$, the last integral is lower bounded by $Ce^{-c_1(3t_n)^q - c_2(6t_n)^r} \int_{\Delta_n} da \, db \geq Ct_n^2 e^{-c_3 (\log n)^{q\vee r}} \gtrsim e^{-C'n\eps_n^2}$. It thus suffices to lower bound the infimum in the last display.

Since $[-\tfrac{8}{5\alpha} \log n,\tfrac{8}{5\alpha} \log n]\subset [a,b]$ and $(b - a) =O(\log n) = o(n^{4/5})$ for all $(a,b)\in\Delta_n$, we may apply Proposition \ref{prop:piecewise_approx} to construct an approximation $\bar{f}_n$ of $f_{0,n}$ based on the interval $[a,b]$ for any $(a,b)\in\Delta_n$. One can then proceed exactly as in the proof of Theorem \ref{thm: contraction} to lower bound the prior small-ball probability by $e^{-Cn\eps_n^2}$ for fixed $(a,b)\in \Delta_n$. Since all constants in that argument depend only on $\alpha$, $\beta$ and the prior hyperparameters, the lower bound is uniform over all $(a,b)$ with $(b-a) = O(n^{2/5})$ (to ensure $\eps_n$ in \eqref{def: eps} takes the value $(\log n)n^{-2/5}$) and $-a,b \geq \tfrac{8}{5\alpha} \log n$. In particular, the lower bound is uniform over $\Delta_n$.
\end{proof}

\begin{proof}[Proof of Corollary \ref{thm: approx}]
We use the notation employed in the proof of Theorem \ref{thm: contraction}. By \eqref{eq: KL_to_L_inf}, it suffices to lower bound the prior probability of an $L^\infty$-small ball about $\bar{w}_n$, where $\bar{f}_n = f_{\bar{w}_n}$ is the approximation \eqref{eq:approx_mixture_rep}. Since $\bar{N}\leq N$ (at least for $n$ large enough), we can add additional breakpoints to the piecewise linear function $\bar{w}_n$ with weights $\bar{p}_i=0$, $i=\bar{N}+1,...,N$, without changing $\bar{w}_n$. Without loss of generality, pick any such additional breakpoints to be no smaller than $cn^{-6/5}$.
Using similar computations to \eqref{eq: Linf_smallball}, for any $x\in[a_n,b_n]$,
\begin{align*}
|\bar{w}_n(x) - w(x)| & = \left| \bar{\gamma}_1 \sum_{i=1}^{N} \frac{z_i \wedge (x-a_n)}{z_i} \bar{p}_i - \bar{\gamma}_2 (x-a_n) -  \gamma_1\sum_{i=1}^N \frac{\theta _i\wedge (x-a_n)}{\theta_i}p_i - \gamma_2(x-a_n) \right| \\
& \leq |\bar{\gamma}_1-\gamma_1| \sum_{i=1}^N \frac{\theta_i \wedge (x-a_n)}{\theta_i}p_i + \bar{\gamma}_1 \left| \sum_{i=1}^N  \frac{\theta_i \wedge (x-a_n)}{\theta_i}p_i - \sum_{i=1}^N \frac{z_i \wedge (x-a_n)}{z_i} p_i  \right| \\
& \quad \quad + \bar{\gamma}_1 \sum_{i=1}^N \frac{z_i \wedge (x-a_n)}{z_i} |p_i -\bar{p}_i| + (b_n-a_n)|\bar{\gamma}_2-\gamma_2| \\
& \leq |\bar{\gamma}_1-\gamma_1| + \bar{\gamma}_1 \sum_{i=1}^N\frac{|\theta_i-z_i|}{\theta_i \wedge z_i } p_i  + \bar{\gamma}_1 \sum_{i=1}^N |p_i - \bar{p}_i| + (b_n-a_n)|\bar{\gamma}_2-\gamma_2|.
\end{align*}
The first and fourth terms are bounded from above by $\eps_n$ with prior probability at least $e^{-n\eps_n^2}$ by \eqref{eq: small_ball_g1} and \eqref{eq: small_ball_g2}, respectively, for both priors. For the second term note, similarly to the proof of Theorem \ref{thm: contraction}, that $z_1 \geq  cn^{-6/5}$. Taking $\theta_i\in[z_i,z_i+cn^{-6/5}\eps_n/\bar\gamma_1]$, the second term is bounded by $\bar\gamma_1\sum_{i=1}^N p_i \eps_n/\bar\gamma_1= \eps_n$. The probability of this set under the base measure is $H([z_i,z_i+cn^{-6/5}\eps_n/\bar\gamma_1])\gtrsim cn^{-6/5}\eps_n/\big(\bar\gamma_1(b_n-a_n)^{\eta}\big)$ by the assumptions on $\bar{H}$. The joint probability that $\theta_i\in [z_i,z_i+cn^{-6/5}\eps_n/\bar\gamma_1]$ for every $i=1,...,N$ is therefore bounded from below by a multiple of $(cn^{-6/5}\eps_n/\bar\gamma_1)^N/(b_n-a_n)^{\eta N}\gtrsim e^{-c_1 N\log n}\geq e^{-c_2 n\eps_n^2}$, for sufficiently large constants $c_1,c_2>0$.

It remains to show that the third term is bounded from above by $\eps_n$ with probability at least $e^{-c n\eps_n^2}$ for some $c>0$.  In the case where $(p_1,...,p_N)$ is endowed with a Dirichlet distribution, this statement follows from \eqref{eq: small_ball_p}. In the case of the truncated stick-breaking prior, writing $(\bar{p}_1,\dots,\bar{p}_N)$ in decreasing order, we note that there exist $0\leq\bar{v}_1,...,\bar{v}_{N-1}\leq 1$ such that $\bar{p}_i=\prod_{j=1}^{i-1}(1-\bar{v}_{j})\bar{v_i}$, $i=1,...,N-1$, and $\bar{p}_N=\prod_{j=1}^{N-1}(1-\bar{v}_{j})$. Define for $i=1,2,...,N-1$ the intervals
$$I_i=\big[(\bar{v}_i-\eps_n n^{-4/5}N^{-2})\vee \eps_n n^{-4/5}N^{-2}/2,(\bar{v}_i+\eps_n n^{-4/5}N^{-2})\wedge (1-\eps_n n^{-4/5}N^{-2}/2)\big].$$
For $v_i\in I_i\subset[0,1]$, $i=1,..,N-1$, we have
\begin{align*}
\Big| (1-v_1)...(1-v_i)v_{i+1}-(1-\bar{v}_1)...(1-\bar{v}_i)\bar{v}_{i+1}\Big|&\leq
|v_1-\bar{v}_1|+|v_2-\bar{v}_2|+...+|v_{i+1}-\bar{v}_{i+1}|\\
&\leq (i+1)\eps_n n^{-4/5}N^{-2}\leq \eps_n n^{-4/5}N^{-1}.
 \end{align*}
Hence for $p_i:=(1-v_1)(1-v_2)...(1-v_{i-1})v_i$, $i=1,...,N-1$, $(p_1,..,p_N)$ is in the $N$-dimensional simplex and $\bar\gamma_1\sum_{i=1}^N |\bar{p}_i-p_i|\leq \bar\gamma_1 N\eps_n n^{-4/5}N^{-1}\lesssim\eps_n$. Finally, we note that for $v_i\sim \text{Beta}(a,b)$, we have $P(v_i\in I_i)\gtrsim(\eps_n n^{-4/5}N^{-2})^{a\vee b}$ and we can therefore conclude
\begin{align*}
P\bigg(\bar\gamma_1\sum_{i=1}^N|p_i-\bar{p_i}|\leq c\eps_n\bigg)&\geq \prod_{i=1}^{N-1} P( v_i\in I_i) \geq e^{N(a\vee b)\log(\eps_n N^{-2}n^{-4/5})}\geq e^{-c_1N\log n}\geq e^{-c_2n\eps_n^2},
\end{align*}
for some large enough constants $c_1,c_2>0$, thereby completing the proof.

For the hierarchical case where we assign a prior to $(a,b)$, the proof follows as in that of Theorem \ref{thm: hier} using \eqref{eq:HB_lower_bd} and the lower bound for the small-ball probability just derived.
\end{proof}
%
%
%

\section{Technical results}\label{sec:technical proofs}

\subsection{Proof of Proposition \ref{prop:piecewise_approx}}\label{sec:linear proofs}

In this section, we construct the piecewise log-linear approximation for an upper semi-continuous log-concave density given in Proposition \ref{prop:piecewise_approx}. In particular, we require that the number of knots in the approximating function does not grow too quickly and that the knots are polynomially separated, thereby rendering the construction somewhat involved. The proof relies on firstly approximating any continuous concave function on a given compact interval using a piecewise linear function. One then splits $\supp(f_0)$ into sets, depending on the size of both $\log f_0$ and $|(\log f_0)'|$, and obtains suitable piecewise linear approximations defined locally on each of these sets. Piecing together these local functions gives the desired global approximation.

We now construct a piecewise linear approximation of a continuous concave function $w$ on a compact interval $[a,b]$. For any partition $a=x_0<x_1<\dots<x_m=b$ of $[a,b]$, let $\widetilde w_m$ denote the piecewise linear approximation of $w$ given by
\begin{equation}\label{eq:fmbar}
  \widetilde w_m(x):=\sum_{i=2}^{m} \bigg(\frac{x-x_{i-1}^*}{x_i^*-x_{i-1}^*}\frac{1}{x_i-x_{i-1}}\theta_i+\frac{x_i^*-x}{x_i^*-x_{i-1}^*}\frac{1}{x_{i-1}-x_{i-2}}
  \theta_{i-1}\bigg)\1_{(x_{i-1}^*, x_i^*]}(x),
   \end{equation}
   where $\theta_i:=\int_{x_{i-1}}^{x_i}w(s)ds$ and $x_i^*:=\frac{x_i+x_{i-1}}{2}$. On $[a, x_1^*]$ and $(x_m^*, b]$, the function is defined by linearly extending the piecewise linear function defined above, that is
   \begin{equation}\label{eq:fmbara}
   \begin{split}
   & \widetilde w_m(a) := \frac{1}{x_2^*-x_1^*} \Big(\frac{x_2^*-a}{x_1-a}\theta_1 - \frac{x_1^*-a}{x_2-x_1}\theta_2 \Big), \\
    & \widetilde w_m(b) := \frac{1}{x_m^*-x_{m-1}^*} \Big(  \frac{b-x_{m-1}^*}{b-x_{m-1}}\theta_m - \frac{b-x_m^*}{x_{m-1}-x_{m-2}}\theta_{m-1} \Big).
    \end{split}
   \end{equation}
The function $\widetilde w_m$ takes value $\widetilde w_m(x_i^*) = \frac{1}{x_i-x_{i-1}} \int_{x_{i-1}}^{x_i} w(s) ds$ at the midpoint $x_i^*=\frac{x_i+x_{i-1}}{2}$ of the interval $[x_{i-1},x_i]$ and interpolates linearly in between.

\begin{lemma}\label{lemma:concave}
Let $w:[a,b]\to\R$ be a continuous concave function, where $-\infty<a<b<\infty$. For any partition $a=x_0<x_1<\dots<x_m=b$ of $[a,b]$, let $\widetilde w_m$ denote the piecewise linear approximation of $w$ defined in \eqref{eq:fmbar} and \eqref{eq:fmbara}. Then $\widetilde w_m$ is a concave function.
\end{lemma}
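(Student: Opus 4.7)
$\widetilde{w}_m$ is a continuous piecewise linear function whose interior breakpoints are the midpoints $x_i^* = (x_{i-1}+x_i)/2$, at which it takes the values $\bar{w}_i := \theta_i/h_i$ with $h_i := x_i - x_{i-1}$; on the outer segments $[a,x_1^*]$ and $[x_m^*,b]$ it is the linear extension of the adjacent piece. Hence $\widetilde{w}_m$ is concave on $[a,b]$ iff its consecutive secant slopes $s_i := (\bar{w}_i - \bar{w}_{i-1})/(x_i^* - x_{i-1}^*)$ are non-increasing in $i = 2,\ldots,m$. Fix $2\leq i\leq m-1$. Using $x_i^*-x_{i-1}^*=(h_i+h_{i-1})/2$ and rearranging, the inequality $s_i \geq s_{i+1}$ is equivalent to
\[
\bar{w}_i \geq \alpha\,\bar{w}_{i-1} + (1-\alpha)\,\bar{w}_{i+1}, \qquad \alpha := \frac{x_{i+1}^* - x_i^*}{x_{i+1}^* - x_{i-1}^*}.
\]

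The plan is to read this as a ``mean-preserving spread'' inequality. Writing $\bar{w}_j = \E[w(U_j)]$ for $U_j$ uniform on $[x_{j-1}, x_j]$, the display becomes $\E[w(S)] \geq \E[w(Y)]$, where $S:=U_i$ and $Y$ is distributed as $U_{i-1}$ with probability $\alpha$ and as $U_{i+1}$ with probability $1-\alpha$. The above choice of $\alpha$ gives $\E[S] = \E[Y] = x_i^*$. Since $S$ is supported on $[x_{i-1},x_i]$ while $Y$ carries no mass on $(x_{i-1},x_i)$, a direct elementary computation of both distribution functions shows that $F_Y - F_S \geq 0$ on $(-\infty,t^*]$ and $F_Y - F_S \leq 0$ on $[t^*,\infty)$, where $t^* := x_{i-1} + \alpha h_i$ is the unique crossing point (which lies in $[x_{i-1},x_i]$). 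The equality of means rewrites as $\int_\R (F_Y - F_S)(t)\,dt = 0$.

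To conclude, integrate by parts, using that the concave, continuous function $w$ is absolutely continuous on $[a,b]$ with non-increasing right-derivative $w'_+$:
\[
\E[w(Y)] - \E[w(S)] = -\int_\R w'_+(t)\,\bigl(F_Y(t) - F_S(t)\bigr)\,dt = -\int_\R \bigl(w'_+(t) - w'_+(t^*)\bigr)\bigl(F_Y(t) - F_S(t)\bigr)\,dt,
\]
the second equality using $\int(F_Y - F_S)\,dt=0$; the boundary terms vanish because $F_Y - F_S$ is identically zero outside $[x_{i-2},x_{i+1}]$. The final integrand is non-negative everywhere: on $(-\infty, t^*]$ both factors $w'_+(t) - w'_+(t^*)$ and $F_Y - F_S$ are non-negative, while on $[t^*,\infty)$ both are non-positive. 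Hence $\E[w(S)] \geq \E[w(Y)]$, proving the desired slope inequality and therefore the concavity of $\widetilde{w}_m$. The main conceptual obstacle is that the naive integration of the pointwise concavity inequality $w(\alpha p + (1-\alpha)q) \geq \alpha w(p) + (1-\alpha) w(q)$ fails when the sub-intervals $[x_{j-1},x_j]$ have different widths; the mean-preserving-spread reformulation circumvents this by working at the level of cumulative distribution functions.
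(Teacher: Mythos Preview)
Your proof is correct and takes a genuinely different route from the paper's.

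The paper argues by contradiction: assuming the slope inequality at some knot $x_i^*$ fails, it subtracts the secant line $l$ through $(x_{i-1}^*,\widetilde w_m(x_{i-1}^*))$ and $(x_{i+1}^*,\widetilde w_m(x_{i+1}^*))$, sets $g=w-l$, and then applies the mean value theorem to each of the three averages $\tilde g_m(x_{i-1}^*),\tilde g_m(x_i^*),\tilde g_m(x_{i+1}^*)$ to produce points $\xi_{i-1}<\xi_i<\xi_{i+1}$ with $g(\xi_{i-1})=g(\xi_{i+1})=0$ and $g(\xi_i)<0$, contradicting the concavity of $g$.

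Your argument is instead a direct second-order stochastic dominance (mean-preserving spread) proof: writing $\bar w_j=\E[w(U_j)]$ for $U_j$ uniform on $[x_{j-1},x_j]$, you observe that the mixture $Y=\alpha U_{i-1}+(1-\alpha)U_{i+1}$ has the same mean as $S=U_i$ and that $F_Y-F_S$ has a single sign change, so integration by parts against the non-increasing $w_+'$ gives $\E[w(S)]\geq \E[w(Y)]$. This is the classical characterisation ``concave functions decrease in expectation under mean-preserving spreads.'' Compared with the paper, your proof is more conceptual and places the lemma squarely in the theory of convex orders, which could be advantageous if one wanted to generalise to other averaging schemes; the paper's argument is slightly more elementary, needing only the mean value theorem and no integration by parts.
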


\begin{proof}
By rescaling, we may without loss of generality assume that $[a,b]=[0,1]$. Note that $\widetilde w_m$ is concave if and only if
\begin{equation}\label{eq:conc}
\widetilde w_m(x_i^*)\geq \frac{x_i^*-x_{i-1}^*}{x_{i+1}^*-x_{i-1}^*} \widetilde w_m(x_{i+1}^*)+\frac{x_{i+1}^*-x_i^*}{x_{i+1}^*-x_{i-1}^*}\widetilde w_m(x_{i-1}^*)
\end{equation}
for $i=2,\dots,m-1$. Indeed, since $\widetilde w_m$ is piecewise linear, it is concave if and only if at every point where the derivative is discontinuous (i.e. a knot), the left derivative is greater than or equal to the right derivative. The above statement follows since the derivative of $\widetilde w_m$ is discontinuous (at most) at the points $x_i^*$, $i=2,\dots,m-1$, where the desired inequality is:
$$
\frac{\widetilde w_m(x_{i+1}^*) - \widetilde w_m(x_i^*)}{x_{i+1}^*-x_i^*} \leq \frac{\widetilde w_m(x_{i}^*) - \widetilde w_m(x_{i-1}^*)}{x_{i}^*-x_{i-1}^*},
$$
which is equivalent to \eqref{eq:conc}.

To see that \eqref{eq:conc} holds if $w$ is concave, we argue by contradiction and suppose that there exists $i$ such that $\widetilde w_m(x_i^*)< \frac{x_i^*-x_{i-1}^*}{x_{i+1}^*-x_{i-1}^*}\widetilde w_m(x_{i+1}^*)+\frac{x_{i+1}^*-x_i^*}{x_{i+1}^*-x_{i-1}^*}\widetilde w_m(x_{i-1}^*)$.
Consider the linear function $l$,
$$l(x):=\frac{x-x_{i-1}^*}{x_{i+1}^*-x_{i-1}^*}\widetilde w_m(x_{i+1}^*)+\frac{x_{i+1}^*-x}{x_{i+1}^*-x_{i-1}^*}\widetilde w_m(x_{i-1}^*).$$
In particular, we have that $\widetilde w_m(x_{i-1}^*)-l(x_{i-1}^*)=\widetilde w_m(x_{i+1}^*)-l(x_{i+1}^*)=0$ and $\widetilde w_m(x_i^*)<l(x_i^*)$. We further denote $g:=w-l$ and observe that 
\begin{align*}
\tilde g_m(x_i^*)&=\frac{1}{x_i-x_{i-1}}\int_{x_{i-1}}^{x_i} \big(w(s)-l(s)\big)ds=\widetilde w_m(x_i^*)-\int_{x_{i-1}}^{x_i} l(s)ds=\widetilde w_m(x_i^*)-l(x_i^*).
\end{align*}
It follows that $\tilde g_m(x)=\widetilde w_m(x)-l(x)$ for all $x\in[0,1]$ and hence by the mean value theorem, 
$$\tilde g_m(x_i^*)=\frac{1}{x_i-x_{i-1}}\int_{x_{i-1}}^{x_i} g(s)ds=g(\xi_i)$$
for some $\xi_i\in [x_{i-1},x_i]$.
One can similarly prove the existence of two points, $\xi_{i-1}\in [x_{i-2},x_{i-1}]$ and $\xi_{i+1}\in [x_i,x_{i+1}]$, such that $\tilde g_m(x_{i-1}^*)=g(\xi_{i-1})$ and $\tilde g_m(x_{i+1}^*)=g(\xi_{i+1})$. Using the above results, we deduce the existence of three points $\xi_{i-1}<\xi_i<\xi_{i+1}$ such that $g(\xi_{i-1})=0=g(\xi_{i+1})$ and $g(\xi_i)<0$, which is a contradiction since $g$ is concave by the concavity of $w$ and $l$.
\end{proof}

\begin{lemma}\label{lemma:linear}
Let $w: [a,b]\rightarrow\mathbb{R}$ be a continuous concave function with $w_+'(a)-w_-'(b)\leq M$ and where $-\infty<a<b<\infty$. Then there exists a partition $a=x_0<x_1<\dots<x_m=b$ of $[a,b]$ with 
$\min_{i=1,\dots,m}(x_i-x_{i-1})\geq (b-a)(2m)^{-2}$ and such that
\begin{equation*}\label{eq:infnorm}
\sup_{x\in[a,b]}|w(x)-\widetilde w_m(x)|\leq C\frac{M(b-a)}{m^2},
\end{equation*}
where $\widetilde w_m$ is the piecewise linear approximation of $w$ defined in \eqref{eq:fmbar} and \eqref{eq:fmbara} and $C>0$ is a universal constant (i.e. not depending on $a,b,m$).
\end{lemma}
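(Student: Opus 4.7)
The plan is to construct a suitable adaptive partition of $[a,b]$ and then bound $|w-\widetilde w_m|$ on each piece using concavity together with the midpoint-averaging structure of $\widetilde w_m$. By rescaling the $x$-axis and multiplying $w$ by a constant, I may assume without loss of generality that $[a,b]=[0,1]$ and $M=1$; since $w$ is concave, the right derivative $w_+'$ is non-increasing on $[0,1]$, and the negative of its distributional derivative defines a positive measure $\nu$ on $[0,1]$ with total mass $\nu([0,1])=w_+'(0)-w_-'(1)\leq 1$.

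The first step is the partition. The goal is $0=x_0<x_1<\dots<x_m=1$ with $\ell_i:=x_i-x_{i-1}$ and $\omega_i:=w_+'(x_{i-1})-w_-'(x_i)$ jointly satisfying $\omega_i\ell_i\lesssim 1/m^2$ and $\ell_i\geq 1/(2m)^2$. I use the combined measure $\mu=\lambda+\nu$, of total mass at most $2$, as follows. First, I pre-place as breakpoints any atoms of $\nu$ of size $\geq C/m$ (there are at most $m/C$ such atoms since $\nu([0,1])\leq 1$). Second, on each intervening sub-interval I run a greedy procedure: set the next $x_i$ to be the smallest $x>x_{i-1}+1/(2m)^2$ with $\mu([x_{i-1},x])\geq C/m$. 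Every ``natural'' piece then satisfies $\ell_i\lesssim 1/m$ and $\omega_i\lesssim 1/m$ (the absorbed atoms being bounded by $C/m$), giving $\omega_i\ell_i\lesssim 1/m^2$; every piece forced up to length $1/(2m)^2$ satisfies $\omega_i\ell_i\leq 1/(2m)^2\lesssim 1/m^2$ trivially. A counting argument using $\mu([0,1])\leq 2$ yields at most $2m/C\leq m$ pieces for $C\geq 2$; any shortfall below $m$ is absorbed by further subdividing a piece of Lebesgue length $\gtrsim 1/m$ into equal sub-pieces, preserving both the length lower bound and the error bound.

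Given this partition, on each interpolation interval $[x_{i-1}^*,x_i^*]$ ($i=2,\dots,m$), I decompose
\[
|w(x)-\widetilde w_m(x)| \leq |w(x)-L_i(x)| + |L_i(x)-\widetilde w_m(x)|,
\]
where $L_i$ is the affine function interpolating $w$ at $x_{i-1}^*$ and $x_i^*$. The first term is the concave sag $w-L_i\geq 0$, whose maximum is bounded by the variation of $w_+'$ on $[x_{i-1}^*,x_i^*]\subset(x_{i-2},x_i)$ times $(x_i-x_{i-2})/4\lesssim 1/m$; this variation is at most $\omega_{i-1}+\omega_i$ plus the partition-controlled jump at $x_{i-1}$, so altogether $\lesssim 1/m^2$. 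The second term is itself affine on $[x_{i-1}^*,x_i^*]$ with endpoint values $w(x_j^*)-\bar w_j$, where $\bar w_j:=\ell_j^{-1}\int_{x_{j-1}}^{x_j}w(s)\,ds=\widetilde w_m(x_j^*)$; by concavity together with the tangent-line bound $|w(x)-w(x_j^*)-s(x-x_j^*)|\leq\omega_j|x-x_j^*|$ for any $s$ in the subdifferential of $w$ at $x_j^*$, integrating over $[x_{j-1},x_j]$ gives $0\leq w(x_j^*)-\bar w_j\leq\omega_j\ell_j/4\lesssim 1/m^2$. The endpoint intervals $[a,x_1^*]$ and $[x_m^*,b]$ are handled by the same decomposition, replacing $L_i$ by the affine extension of $\widetilde w_m$ from the adjacent interior interval; the bounds from the first (respectively last) two intervals yield the same $O(1/m^2)$ rate. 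Reinstating the rescaling recovers the claimed $L^\infty$ bound $CM(b-a)/m^2$ and the length lower bound $(b-a)(2m)^{-2}$.

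The main obstacle is the partition construction: one has to balance the competing demands of small $\mu$-mass per interval versus a uniform length lower bound while producing exactly $m$ pieces, and in particular to absorb any large atoms of $\nu$ (at the kinks of $w$) that would otherwise inflate $\omega_i$. The subsequent error analysis is careful but a fairly routine exploitation of concavity and the specific midpoint-averaging structure of $\widetilde w_m$.
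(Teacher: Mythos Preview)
Your overall strategy---construct an adaptive partition with $\omega_i\ell_i\lesssim 1/m^2$ on each piece, then bound the error via the midpoint-averaging structure of $\widetilde w_m$---is the right one and parallels the paper's approach. However, the sag estimate has a genuine gap at the pre-placed large atoms.

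Suppose $x_{i-1}$ is a pre-placed atom with $\Delta w'(x_{i-1})=\Theta(1)$; nothing in your construction prevents this. Your greedy procedure then runs independently on the two sub-intervals meeting at $x_{i-1}$, and can produce adjacent intervals $[x_{i-2},x_{i-1}]$ and $[x_{i-1},x_i]$ both of ``natural'' length $\asymp 1/m$. The variation of $w'$ on $[x_{i-1}^*,x_i^*]$ then contains the full jump at $x_{i-1}$ and is $\Theta(1)$, while $x_i^*-x_{i-1}^*=(\ell_{i-1}+\ell_i)/2\asymp 1/m$, so your sag bound yields only $O(1/m)$. Concretely, take $w(x)=-|x-\tfrac12|$ on $[0,1]$ (so $M=2$), put $x_{i-1}=\tfrac12$, $\ell_{i-1}=\ell_i=1/m$. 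A direct computation gives $\widetilde w_m(\tfrac12)=\bar w_{i-1}=\bar w_i=-1/(2m)$ while $w(\tfrac12)=0$, so $|w-\widetilde w_m|(\tfrac12)=1/(2m)$, which is not $O(M/m^2)$. The phrase ``partition-controlled jump at $x_{i-1}$'' is the unjustified step: pre-placing the atom as a breakpoint does nothing to control the size of $\Delta w'$ there. The same issue arises when a ``forced'' interval (with uncontrolled $\omega_i$) abuts a natural one.

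The fix---and this is precisely what the paper does---is to insert extra breakpoints at distance $\asymp 1/m^2$ on each side of every large atom (and similarly around any point where the greedy refinement detects large local variation), so that the intervals adjacent to any point of large $\Delta w'$ are forced to be short. On those short intervals one uses the crude Lipschitz bound $|w-\widetilde w_m|\leq M\cdot(\text{length})\lesssim M/m^2$ directly; on the remaining intervals the derivative variation is genuinely $\lesssim M/m$ and your Taylor/sag argument goes through. Maintaining the length lower bound $\geq (b-a)/(2m)^2$ after all these insertions requires a separate cleanup pass (the paper runs an explicit algorithm for this), but the buffer points around the large atoms are the missing structural ingredient.
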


\begin{proof}
By translation we may without loss of generality take $a=0$. Recall that since $w$ is a continuous concave function, it has left and right derivatives at every point $x\in[0,b]$. Define $\Delta w'(x)=w_-'(x)-w_+'(x).$ For every $r\geq 1$, let $\mathcal P_1:=\{x_{i,1}:=\frac{ib}{r}, i=0,\dots,r\}$ be the uniform partition of $[0,b]$ and let $\tilde x_{i,2}$, $i=1,\dots,r_2$, be the points such that $\Delta w'(\tilde x_{i,2})\geq M/r$, setting $r_2=0$ if no such point exists. By concavity of $w$,
\begin{align}\label{eq:sim_arg}
M\geq w_+'(0)-w_-'(b)&\geq  \sum_{i=1}^{r_2} \Delta w'(\tilde x_{i,2})\geq \frac{Mr_2}{r},
\end{align}
so that $r_2\leq r$.

Consider a new partition $\mathcal P_2:=\{x_{0,2}<\dots<x_{r_2',2}\}$ of $[0,b]$, consisting of the points $\{x_{i,1}\} \cup \{\tilde x_{i,2}\} \cup \{\tilde x_{i,2} - br^{-2}\} \cup \{\tilde x_{i,2} +br^{-2}\}$ written in increasing order. Note that $r_2' \leq r+3r_2$. Colour in red all the points of the form $\tilde x_{i,2}$ and $\tilde x_{i,2}-br^{-2}$, so that each red point is the left endpoint of an interval of length at most $br^{-2}$. This colouring will be used to keep track of points that have a close neighbour on the right.

We next refine the partition $\mathcal P_2$ by adding the point $y$ between $x_{i,2}$ and $x_{i+1,2}$
\begin{equation*}\label{eq:xi3}
y:=\sup \Big\{x>x_{i,2}: w_+'(x_{i,2})-w'_-(x)\leq \frac{2M}{r}\Big\}
\end{equation*}
if
\begin{equation*}
w_+'(x_{i,2})-w_-'(x_{i+1,2})>\frac{2M}{r}.
\end{equation*}
Denote by $r_3$ the total number of points $y$ added in this manner to the sequence. We further add the points $y-br^{-2}$, $y+br^{-2}$ and colour in red all points of the form $y$ and $y-br^{-2}$, similarly to the previous case. Repeating this procedure results in a new partition that separates intervals where the derivative decreases by at most $2M/r$. Denote by $\mathcal P_3:=\{0=x_{0,3}<x_{1,3}<\dots<x_{r_3',3}=b\}$ this new partition. We now show that $r_3'\leq 7 r$.

Let $y$ by any point added in the way just described. Suppose by contradiction that $w_+'(x_{i,2}) - w_-'(y) < M/r$. By definition, we know that for all $x > y$, $w_+'(x_{i,2}) - w_-'(x) > 2M/r$. Subtracting the two inequalities gives $w_-'(y) - w_-'(x) > M/r$. However, since the right derivative of a concave function is right continuous, taking the limit $x \to y^+$ (and restricting to the points $x$ where $w$ is differentiable) yields $\Delta w'(y) \geq M/r$.  This is a contradiction however, because if this were the case, $y$ would already belong to $\mathcal P_2$. Since $w_+'(x_{i,2}) - w_-'(y) \geq M/r$, using a similar argument to \eqref{eq:sim_arg} gives $r_3 \leq r$ so that $r_3'\leq 7 r$.

Finally, if the function $w$ is not differentiable at the point $x_{i,3}^*=\frac{x_{i,3}+x_{i-1,3}}{2}$, we split $[x_{i-1,3},x_{i,3}]$ into two parts in such a way that $w$ is differentiable at the midpoints of both new intervals and each interval has size at least $(x_{i,3}-x_{i-1,3})/3$. We add the points separating the new intervals to the previous partition, thereby obtaining $\mathcal P_4:=\{0=x_{0,4}<x_{1,4}<\dots<x_{\nu,4}=b\}$. The cardinality of $\mathcal P_4$, satisfies $\nu+1 \leq 14r+1$. We now create a new partition $\mathcal P$ with polynomially separated points using the following algorithm.
\begin{enumerate}
\item Set $\mathcal{P} = \mathcal{P}_4$, keeping track of all the points coloured red. Set $\tilde{x} = x_{0,4}$.
\item If $b-\tilde{x} \leq br^{-2}$, remove all points in $\mathcal{P}$ strictly between $\tilde{x}$ and $b$ skip to Step 4.
\item Set $y = \inf\{ t\in \mathcal{P}: t > \tilde{x} + br^{-2} \}$. Remove all elements of $\mathcal{P}$ between $\tilde{x}$ and $y$. If at least one element was removed, add to $\mathcal{P}$ the point $s=\tilde{x}+br^{-2}+\varepsilon$ for some $0<\varepsilon < br^{-2}\wedge (y-\tilde{x}-br^{-2})$ such that $w$ is differentiable at $(s+y)/2$ and $(s+\tilde{x})/2$. Colour $\tilde{x}$ red to mark that $s-\tilde{x}<2br^{-2}$. Set $\tilde{x} := s$. If no point was removed from $\mathcal{P}$, set $\tilde{x}:= y$. Go to Step 2.
\item If $\tilde{x}=b$ then stop. Otherwise set $y = \max \{ t\in \mathcal{P}: t< \tilde{x} \}$ and remove $\tilde{x}$ from $\mathcal{P}$. If $b-y > 2br^{-2}$, add the point $s:=b-br^{-2}-\varepsilon$ to $\mathcal{P}$ and colour it red, where $0 < \varepsilon < (b-y-2br^{-2}) \wedge br^{-2}$ is such that $w$ is differentiable at $(y+s)/2$. If $b-y \leq 2br^{-2}$, add the point $s := (y+b)/2$ and colour both $y$ and $s$ red.
\end{enumerate}

Relabel the final partition $\mathcal P:=\{0=x_0<x_1<\dots<x_m=b\}$ and note that
\begin{equation*}
r \leq m \leq \nu \leq 14r+1.
\end{equation*}
By construction $\min_{i=0,\dots,m-1}(x_{i+1}-x_{i})\geq \tfrac{1}{2}br^{-2} \geq \tfrac{1}{2}bm^{-2}$ and 
\begin{equation*}
x_{i+1}-x_i\leq \begin{cases}
2C_0^2 bm^{-2} &\text{ if } x_i \text{ is coloured red (with $C_0=15$}),\\
C_0 bm^{-1}  &\text{ otherwise,}
\end{cases}
\end{equation*}
since if $x_i$ is coloured red, $x_{i+1}-x_i \leq 2br^{-2}\leq 2C_0^2 bm^{-2}$.

We now show that $\|w-\widetilde w_m\|_{\infty}=O(b m^{-2})$. If $x_{i-1}$ is red, then by the mean value theorem, there exists $\xi_i\in J_i:=[x_{i-1},x_i]$ such that $\widetilde w_m(x_i^*)=w(\xi_i)$. Using the Lipschitz continuity of $w$ and $\widetilde w_m$,
\begin{equation*}\label{eq:smallintervals}
|w(x)-\widetilde w_m(x)|\leq 2 C_0^2\frac{bM}{m^2}, \quad \forall x\in J_i:=[x_{i-1},x_i].
\end{equation*}

If $x_{i-1}$ is not red, Taylor expanding $w$ at the points $x_i^*:=\frac{x_i+x_{i-1}}{2}$ (at which $w$ is differentiable by the construction of $\mathcal P)$ gives
\begin{align}\label{eq:taylor}
 w(x)&=w(x_i^*)+w'(x_i^*)(x-x_i^*)+R_i(x), \quad x\in J_i.
\end{align}
Due to the construction of $\mathcal P$,
 \begin{align*}
  |R_i(x)|&=\Big|w(x)-w(x_i^*)-w'(x_i^*)(x-x_i^*)\Big|\nonumber \\
    &=\big|(w'(\xi_i)-w'(x_i^*)) (x-x_i^*)\big|, \label{eq:resto}
 \end{align*}
where $w'(\xi_i)$ stands here for some value in the interval $[w'_+(\xi_i), w'_-(\xi_i)]$ for some point $\xi_i\in J_i$. We then deduce that $|R_i(x)|\leq \frac{2M}{r} \frac{C_0b}{2m}\leq C_0^2bM/m^2.$

Since $\widetilde w_m$ is piecewise linear, we can write
$$\widetilde w_m(x)=\widetilde w_m(x_i^*)+\widetilde w_m'(x_i^*)(x-x_i^*), \quad \quad x\in J_i,$$
where $\widetilde w_m'$ denotes the left or right derivative of $\widetilde w_m$ at $x_i^*$, depending on whether $x<x_i^*$ or $x>x_i^*$.
We now show that $|\widetilde w_m'(x_i^*)-w'(x_i^*)|\leq \max\{w_+'(x_{i-1})-w'(x_i^*),w'(x_i^*)-w_-'(x_{i+1})\}$ for $i=1,\dots,m-1$. Consider the case of right derivatives (the same argument also works for left derivatives). Using the definition of $\widetilde w_m$ and that $\theta_i=\int_{J_i}w(s)ds$,
\begin{align*}
\widetilde w_{m,+}'(x_i^*)&=\frac{\widetilde w_m(x_{i+1}^*)-\widetilde w_m(x_i^*)}{x_{i+1}^*-x_i^*}=\frac{1}{x_{i+1}^*-x_i^*}\bigg(\frac{\theta_{i+1}}{x_{i+1}-x_i}-\frac{\theta_{i}}{x_{i}-x_{i-1}}\bigg)\\
&=\frac{1}{(x_{i}-x_{i-1})(x_{i+1}^*-x_i^*)}\int_{x_{i-1}}^{x_i}\bigg[w\Big(\frac{x_{i+1}-x_i}{x_i-x_{i-1}}t+\frac{x_{i}^2-x_{i+1}x_{i-1}}{x_i-x_{i-1}}\Big)-w(t)\bigg]dt
\end{align*}
and
$$\int_{x_{i-1}}^{x_i}\bigg[\frac{x_{i+1}-x_i}{x_i-x_{i-1}}t+\frac{x_{i}^2-x_{i+1}x_{i-1}}{x_i-x_{i-1}}-t\bigg]dt=(x_{i+1}^*-x_i^*)(x_{i}-x_{i-1}).$$
By the continuity and concavity of $w$, $(v-u) w_-'(x_{i+1}) \leq w(v)-w(u) \leq (v-u) w_+'(x_{i-1})$ for any $x_{i-1} \leq u\leq v\leq x_{i+1}$. Combining all of the above yields
\begin{equation}
w_-'(x_{i+1})\leq \widetilde w_{m,+}'(x_i^*)\leq w_+'(x_{i-1}).
\label{eq:lin_approx_derive_control}
\end{equation}
We remark that $\max\{w_+'(x_{i-1})-w'(x_i^*),w'(x_i^*)-w_-'(x_{i+1})\}\leq \frac{5M}{r}$. Indeed, since $x_i-x_{i-1}> C_0^2b/m^2$, the point $x_i$ is not equal to $x_{j,4}'$ for any $j$ and hence both $\Delta w'(x_i)<M/r$ and $w_+'(x_i)-w_-'(x_{i+1})\leq 2M/r$ hold. Together with $w_+'(x_{i-1})-w'(x_i^*)\leq 2M/r$ and $w'(x_i^*)-w'_-(x_i)\leq 2M/r$, this verifies the preceding statement. Then
\begin{align*}
w'(x_i^*)-w_-'(x_{i+1})&\leq w'(x_i^*)-w'_-(x_i)+w_-'(x_i)-w_+'(x_i)+w_+'(x_i)-w_-'(x_{i+1})\leq \frac{5M}{r}.
\end{align*}
We hence deduce that
$
 |w'(x_i^*)-\widetilde w_m'(x_i^*)|\leq 5 M C_0 m^{-1}.
 $
Finally, using \eqref{eq:taylor} and the fact that $\int_{J_i}(x-x_i^*)dx=0$,
$$\big|w(x_i^*)-\widetilde w_m(x_i^*)\big|=\frac{1}{x_i-x_{i-1}}\bigg|\int_{J_i}\big(w(x_i^*)-w(x)\big)dx\bigg|\leq \sup_{x\in J_i}|R_i(x)|\leq \frac{C_0^2 bM}{m^2}.$$
Collecting together all the pieces, we have that for any $x\in[x_{i-1},x_i]$,
\begin{align*}
 |w(x)-\widetilde w_m(x)|\leq  |w(x_i^*)-\widetilde{w}_m(x_i^*)|+|x-x_i^*| |\widetilde w_m'(x_i^*)-w'(x_i^*)| +|R_i(x)| \leq\frac{9}{2} C_0^2Mb m^{-2}.
\end{align*}
\end{proof}

\begin{lemma}\label{lem: mix_rep_lem}
Any piecewise linear concave function $w:[a,b] \rightarrow \R$ with $N$ knots $\{z_1,...,z_N\}$ can be written in the form
\begin{align*}
w(x)= \gamma_1\sum_{i=1}^{N}\frac{z_i \wedge (x-a)}{z_i} p_i-\gamma_2 (x-a)+\gamma_3,
\end{align*}
with parameters $0\leq \gamma_1\leq (w_+'(a)-w_{-}'(b))(b-a)$, $|\gamma_2|\leq  |w_{-}'(b)|$,  $\gamma_3\in\mathbb{R}$, $\sum_{i=1}^{N} p_i=1$ and $p_i\geq 0$ for  $i=1,\dots,N$.
\end{lemma}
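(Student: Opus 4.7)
The approach is to match the right derivatives of $w$ and the proposed representation, which reduces the problem to expressing the piecewise constant non-increasing function $w_+'$ as a suitable mixture of step functions. I would shift by $y = x-a$ and set $v(y) := w(y+a)$ on $[0,b-a]$, ordering the knots as $0 < z_1 < \cdots < z_N < b-a$. Since $v$ is piecewise linear and concave, its right derivative $v_+'$ is constant between consecutive knots with nonnegative drops $\alpha_i := v_-'(z_i) - v_+'(z_i) \geq 0$. Writing $s_N := v_-'(b-a) = w_-'(b)$, telescoping gives
$$v_+'(y) = s_N + \sum_{i:\, z_i > y} \alpha_i, \qquad y \in [0,b-a].$$

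Next, I would directly compute the right derivative of the target form
$$g(y) := \gamma_1 \sum_{i=1}^N \frac{z_i \wedge y}{z_i} p_i - \gamma_2 y + \gamma_3,$$
obtaining $g_+'(y) = \gamma_1 \sum_{i:\, z_i > y} p_i / z_i - \gamma_2$, since the ramp $y \mapsto (z_i \wedge y)/z_i$ has right derivative $z_i^{-1}\1\{y < z_i\}$. Matching with the expression for $v_+'$ suggests the choices $\gamma_2 := -w_-'(b)$, $\gamma_1 := \sum_{i=1}^N \alpha_i z_i$, and $p_i := \alpha_i z_i / \gamma_1$. These automatically give $p_i \geq 0$ with $\sum_i p_i = 1$, $|\gamma_2| = |w_-'(b)|$, and bounding $z_i \leq b-a$ together with the telescoping identity $\sum_i \alpha_i = w_+'(a) - w_-'(b)$ yields $\gamma_1 \leq (b-a)(w_+'(a) - w_-'(b))$. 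With $v_+' \equiv g_+'$, the functions $v$ and $g$ differ by a constant on $[0,b-a]$, so setting $\gamma_3 := w(a)$ forces agreement at $y=0$ and hence everywhere.

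The only degenerate case is $\gamma_1 = 0$: then every $\alpha_i = 0$, so $w$ is affine and any probability vector $(p_i)$ works since the $\gamma_1$-term vanishes identically. No serious obstacle is anticipated — the proof is purely algebraic once the derivative-matching perspective is adopted, and the claimed bounds on $\gamma_1$ and $\gamma_2$ emerge directly from the definition of $\alpha_i$ and the concavity of $w$.
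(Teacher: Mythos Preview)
Your proof is correct and follows essentially the same approach as the paper: both shift to $[0,b-a]$, subtract the terminal slope $w_-'(b)$ from the (one-sided) derivative to obtain a nonnegative decreasing step function, represent that step function as a mixture, and integrate back. The paper phrases the mixture step abstractly via the Williamson representation of monotone densities (which gives $\gamma_1$ as the integral of the shifted step function and $P_N$ as the mixing measure), whereas you write out the explicit formulas $\gamma_1=\sum_i \alpha_i z_i$ and $p_i=\alpha_i z_i/\gamma_1$; these are the same objects, and your explicit bounds on $\gamma_1$ and $\gamma_2$ coincide with the paper's.
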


\begin{proof}
The left derivative of $w$ is a step function $g:\, (a,b]\mapsto \mathbb{R}$ with $g(a+\eps)=w_+'(a)$ for sufficiently small $\eps>0$ and $g(b)=w_-'(b)$. By shifting this function vertically by  $-w_-'(b)$, we arrive at a non-negative, bounded, monotone decreasing step function, which can therefore be written as a monotone decreasing probability density times a normalizing constant $\gamma_1$. It is easy to see that $\gamma_1\leq (b-a) (w_+'(a)-w_-'(b))$. The step function can therefore be represented as
$$g(x)=\gamma_1\int_{x-a}^{b-a}\frac{1}{z}dP_{N}(z)+w_-'(b),\qquad x\in[a,b],$$
with $P_N$ an atomic probability measure with $N$ atoms on $[0,b-a]$ and $\gamma_1$ the normalizing constant. Integrating the step function $g$ yields
\begin{align*}
\bar{w}_N(x)
=\gamma_1\int_0^{b-a}\frac{z\wedge (x-a)}{z }dP_{N}(z)+ w_-'(b) (x-a) +C, 
\end{align*}
which is equal to $w$ for an appropriately chosen constant $C>0$.
\end{proof}

\begin{proof}[Proof of Proposition \ref{prop:piecewise_approx}]
Let $\psi_n(x) = \1_{[-s_n,s_n]}(x)$ for $s_n = \tfrac{4}{5\alpha}\log n$. The log-concave density function $f_1 = f_{1,n}=f_0 \psi_n /\int f_0 \psi_n$ supported on $[-s_n,s_n]$ satisfies $|1-\int f_0 \psi_n| \leq P_{f_0}[-s_n,s_n]^c \leq 2e^\beta \alpha^{-1} n^{-4/5}$. Arguing as in \eqref{eq:log concave compact support approx}, one has $h^2(f_0,f_{1,n}) \leq 12e^\beta \alpha^{-1} n^{-4/5}$ for $n\geq (4e^\beta/\alpha)^{5/4}$.

We write $f_1 = e^{w_1}$ and construct the approximating function $\bar{f}_n$ according to the value of $w_1$ and its left and right derivatives $w_{1,-}'$ and $w_{1,+}'$. Let
\begin{align*}
& A_0^n = \{ x\in[a_n,b_n]:w_1(x)< -\tfrac 45 \log n \},\\
& A_1^n = \{x\in[a_n,b_n]: w_1(x) \geq -\tfrac 45 \log n, |w_{1,\pm}'(x)| > n^{4/5} \},\\
& A_{2,j}^n = \{x\in[a_n,b_n]: w_1(x) \geq -\tfrac45 \log n, 2^{-j-1} n^{4/5} < |w_{1,\pm}'(x)|\leq 2^{-j} n^{4/5} \}, \quad j = 0,\dots, j_n,  \\
& A_3^n = \{x\in[a_n,b_n]: w_1(x) \geq -\tfrac 45 \log n, |w_{1,\pm}'(x)| \leq D \},
\end{align*}
where $D>0$ is some fixed constant, $|w_{1,\pm}'(x)| = \max (|w_{1,+}'(x)|,|w_{1,-}'(x)|)$ and $j_n = \lceil \log_2 (n^{4/5}/D)\rceil -1$. In fact the set where the left and right derivatives of the concave function $w_1$ do not agree has measure zero. Note that the above sets are all disjoint except $A_{2,j_n}^n$ and $A_3^n$: since $j_n$ is the smallest integer such that $2^{-j_n-1}n^{4/5} \leq D$, these last two sets may overlap. In particular, we can express $[a_n,b_n]$ as the almost disjoint union of the above sets. Write $B_n = (\cup_{j=0}^{j_n} A_{2,j}^n) \cup A_3^n \subset [a_n,b_n]$ and note that by the concavity of $w_1$, this is an interval. Since $\|f_1\|_\infty \leq 2e^{\beta}$ for $n\geq (4e^\beta/\alpha)^{5/4}$, the set $A_1^n$ consists of at most two intervals, each of width $O(n^{-4/5}\log n)$. Using again the boundedness of $f_1$, the definition of $A_0^n$ and that $|\text{supp}(f_1)|\lesssim \log n$,
\begin{equation}\label{eq:int_Bm}
\int_{B_n} f_1dx = 1 - O(n^{-4/5}\log n),
\end{equation}
so that in particular, $B_n \neq \emptyset$ for $n\geq n_0(\alpha,\beta)$ large enough.

We now construct a partition $\Pp_n$ of $B_n$ based on which we take the piecewise linear approximation \eqref{eq:fmbar}-\eqref{eq:fmbara} of the function $w_1$. Note that $A_{2,j}^n$ consists of at most two disjoint intervals, $A_{2,j,+}^n$ and $A_{2,j,-}^n$, which by the boundedness of $f_1$ are each of length $O(2^{j+1}n^{-4/5}\log n)$. Let $\Pp_{n,A_{2,j,+}^n}$ denote the partition of the interval $A_{2,j,+}^n$ given by Lemma \ref{lemma:linear} with partition size $m = \lceil 2^{-j/2} n^{3/5} |A_{2,j,+}^n|^{1/2}/\sqrt{\log n} \rceil =  O(n^{1/5})$ and let $\Pp_{n,A_{2,j,-}^n}$ be the analogous partition constructed on $A_{2,j,-}^n$. Similarly, $A_3^n \cap (A_{2,j_n}^n)^c$ consists of a single interval of length $O(\log n)$. Let $\Pp_{n,A_3^n \cap (A_{2,j_n}^n)^c}$ denote the corresponding partition of $A_3^n \cap (A_{2,j_n}^n)^n$ given by Lemma \ref{lemma:linear} with partition size $m = \lceil n^{1/5} (D|A_3^n \cap (A_{2,j_n}^n)^c|/\log n)^{1/2} \rceil = O(n^{1/5}).$ Define the overall partition
$$ \mathcal{P}_n = \mathcal{P}_{n,A_3^n \cap (A_{2,j_n}^n)^c} \cup \bigcup_{j=0}^{j_n} (\mathcal{P}_{n,A_{2,j,+}^n} \cup \mathcal{P}_{n,A_{2,j,-}^n})$$
of $B_n$, which has $O(j_n n^{1/5}) = O(n^{1/5} \log n)$ points. The associated piecewise linear function $\widetilde{w}_n$ defined in \eqref{eq:fmbar}-\eqref{eq:fmbara} based on $\mathcal{P}_n$ is concave by Lemma \ref{lemma:concave} and by construction corresponds to the partition given in Lemma \ref{lemma:linear} for each of the sets comprising $B_n$. It therefore satisfies the conclusions of Lemma \ref{lemma:linear} on each such set (with the appropriate $m$), so that in particular,
\begin{itemize}
\item $\|w_1 - \widetilde{w}_n\|_{L^\infty(A_{2,j}^n)}\leq Cn^{-2/5}\log n$ for some universal constant $C>0$ independent of $j$,
\item the partition points in $A_{2,j}^n$ are distance at least $c2^j n^{-6/5}\log n \geq c n^{-6/5}\log n$ apart for some universal constant $c>0$ independent of $j$,
\item on $A_3^n \cap (A_{2,j_n}^n)^c$, we have the same $L^\infty$-bound with the partition points being $cn^{-2/5}\log n$-separated.
\end{itemize}
Moreover, since these intervals meet only at their boundaries, and the boundary points of the intervals are contained in the partition presented in Lemma \ref{lemma:linear}, the interval boundaries will be contained in $\mathcal{P}_n$. Consequently, the separation property continues to hold even across the different subpartitions. In conclusion, we have shown that $\widetilde{w}_n$ is concave and piecewise linear with $O(n^{1/5}\log n)$ knots, which are $cn^{-6/5}\log n$-separated, and satisfies 
\begin{align}
\sup_{x\in B_n} |\widetilde{w}_n(x) - w_1(x)| = O(n^{-2/5}\log n).
\label{eq:l_infty_bd_intermediate}
\end{align}

We now extend the approximating function to $[a_n,b_n] \supset B_n$. Write $\mathcal{P}_n = (x_i)_{i=0}^M$, where $\min (B_n) = x_0<x_1<...<x_M = \max(B_n)$ and $M = O(n^{1/5}\log n)$. Define $\bar{w}_n:[a_n,b_n]\rightarrow \R$ as
\begin{align}\label{eq: concave}
\bar{w}_n(x)=
\begin{cases} 
      \widetilde{w}_n(x_0) + \big(w_{1,-}'(x_0)\wedge n^{4/5}\vee  (-n^{4/5})\big) (x-x_0) & x \in [a_n,x_0],\\
      \widetilde{w}_n(x) & x\in B_n, \\
       \widetilde{w}_n(x_M) + \big(w_{1,+}'(x_M)\wedge n^{4/5}\vee  (-n^{4/5}) \big)(x-x_M) & x \in [x_M,b_n].
   \end{cases}
\end{align}
This is simply the function $\widetilde{w}_n$ extended linearly from the boundary points of $B_n$ with slope $w_{1,-}'(x_0)\wedge n^{4/5}\vee (-n^{4/5}) $ and $w_{1,+}'(x_M)\wedge n^{4/5}\vee  (-n^{4/5})$ on $[a_n,x_0]$ and $[x_M,b_n]$ respectively. We now verify that $\bar{w}_n$ is concave, for which it is enough to show that $\bar{w}_{n,+}'(x_0) \leq \bar{w}_{n,-}'(x_0)$ and $\bar{w}_{n,+}'(x_M) \leq \bar{w}_{n,-}'(x_M)$. For the first inequality, using \eqref{eq:lin_approx_derive_control}, the concavity of $w_1$ and the boundary construction of $\widetilde{w}_n$ given by \eqref{eq:fmbar}, $\bar{w}_{n,+}'(x_0) = \widetilde{w}_{n,+}'(x_0) = \widetilde{w}_{n,+}'(x_1^*)\leq w_{1,+}'(x_0) \leq w_{1,-}'(x_0)$. Since $x_0\in B_n$, it also holds that $|\bar{w}_{n,+}'(x_0)|\leq n^{4/5}$. The second inequality can be proved analogously.

Since $\log (1+z) = O(z)$ as $|z| \rightarrow 0$, it follows that $\log \int f_0 \psi_n = O(n^{-4/5})$. Using this and \eqref{eq:l_infty_bd_intermediate},
\begin{equation*}
|\bar{w}_n(x_0) - \log f_0 (x_0)| \leq \Big| \log \int f_0 \psi_n \Big| + O(n^{-2/5}\log n)= O(n^{-2/5}\log n).
\end{equation*}
By concavity, the slope of the linear extension on $[a_n,x_0]$ satisfies $w_{1,-}'(x_0) = (\log f_0)_{-}'(x_0) \leq (\log f_0)_{+}'(x)$ for all $x< x_0$ such that $f_0(x) >0$. Combining the above yields $\bar{w}_n(x) \geq \log f_0(x) - O(n^{-2/5}\log n)$ for all $x\in [a_n,x_0]$. The same computation also gives the result for $x\in [x_M,b_n]$, so that for some $C>0$,
\begin{align}
\sup_{x\in [a_n,b_n]\backslash B_n} (\log f_0(x) - \bar{w}_n(x)) \leq C n^{-2/5} \log n.
\label{eq:tail_control}
\end{align}

Define the log-concave density
\begin{align*}
\bar{f}_n(x) =
\begin{cases} 
      e^{\bar{w}_n(x)} / \int_ {a_n}^{b_n} e^{\bar{w}_n} & x\in [a_n,b_n], \\
      0 & x \not\in [a_n,b_n].
   \end{cases}
\end{align*}
This function is piecewise log-linear, has $O(n^{1/5}\log n)$ knots and satisfies $(ii)$ and $(iii)$ by construction. We have
\begin{equation}\label{eq:hell_decom}
h^2(f_1,\bar{f}_n) \leq 2\int_{B_n^c} f_1  + 2\int_{B_n^c} \bar{f}_n  +  \int_{B_n} (f_1^{1/2} - \bar{f}_n^{1/2})^2 .
\end{equation}

The first integral is $O(n^{-4/5}\log n)$ by \eqref{eq:int_Bm}. Using \eqref{eq:l_infty_bd_intermediate}, $\int_{B_n} e^{\bar{w}_n} = e^{o(1)} \int_{B_n} f_1$.
Write the second integral as $\int_{B_n^c} \bar{f}_n = \int_{A_0^n} \bar{f}_n + \int_{A_1^n} \bar{f}_n$. By the definition of $A_0^n$ and \eqref{eq:tail_control},  $\int_{A_0^n} e^{\bar{w}_n} \leq \int_{A_0^n} e^{-(4/5)\log n + n^{-2/5} \log n} \leq  (x_0-a_n + b_n-x_M) e^{o(1)}n^{-4/5} = O((b_n-a_n) n^{-4/5})$. For the integral over $A_1^n$ we simply observe that by \eqref{eq:tail_control}, $\bar{w}_n \leq \beta + n^{-2/5} \log n$, and recall that the measure of $A_1^n$ is at most $2 n^{-4/5}(\beta + \frac{4}{5}\log n)$. Since $b_n-a_n \geq \frac{16}{5\alpha}\log n$, then $\int_{A_1^n} \bar{f}_n$ is also $O((b_n-a_n) n^{-4/5})$.
This implies that the second integral in \eqref{eq:hell_decom} is $O((b_n-a_n) n^{-4/5})$.

Using \eqref{eq:int_Bm}, \eqref{eq:l_infty_bd_intermediate}, Lemma 3.1 of \cite{vandervaart2008} and the above, the third term of \eqref{eq:hell_decom} is bounded by a multiple of
\begin{align*}
& \int_{B_n} e^{w_1} \Bigg( 1-\frac{1}{\sqrt{\int_{B_n}e^{w_1}}} \Bigg)^2  + \int_{B_n} \Bigg( \frac{e^{w_1/2}}{\sqrt{\int_{B_n}e^{w_1}}} - \frac{e^{\bar{w}_n/2}}{\sqrt{\int_{B_n}e^{\bar{w}_n}}} \Bigg)^2  \\
& \quad \quad + \int_{B_n} e^{\bar{w}_n} \Bigg( \frac{1}{\sqrt{\int_{B_n}e^{\bar{w}_n}}} -\frac{1}{\sqrt{\int_{a_n}^{b_n} e^{\bar{w}_n}}} \Bigg)^2 \\
& \lesssim \left( \int_{B_n} f_1 - 1 \right)^2 + \|\bar{w}_n - w_1\|_{L^\infty(B_n)}^2 e^{\|\bar{w}_n - w_1\|_{L^\infty(B_n)}} + \left( \int_{B_n} e^{\bar{w}_n} - \int_{a_n}^{b_n}e^{\bar{w}_n} \right)^2\\
& = O((\log n)^2 n^{-4/5} + (b_n-a_n)^2 n^{-8/5}),
\end{align*}
which establishes $(i)$.

Consider $(iv)$. Note that this is trivial if $f_0(x) = 0$, so assume $f_0(x)\neq 0$. If $x\in B_n$, then by \eqref{eq:l_infty_bd_intermediate},
\begin{equation*}
f_0(x)/\bar{f}_n(x) = e^{w_1(x)-\bar{w}_n(x)} \int f_0 \psi_n \int_{a_n}^{b_n} e^{\bar{w}_n} = e^{O(n^{-2/5}\log n)}(1+o(1)) = 1+o(1).
\end{equation*}
If $x\in [a_n,b_n]\backslash B_n$, then the result follows from \eqref{eq:tail_control}.

Consider lastly $(v)$. Since $\bar{w}_n$ defined in \eqref{eq: concave} is piecewise linear with $|w_+'(a_n)|\vee |w_-'(b_n)|\leq n^{4/5}$, in view of Lemma \ref{lem: mix_rep_lem} it takes the form
\begin{align*}
\bar{w}_n(x)=\gamma_1\sum_{i=1}^M\frac{z_i\wedge (x-a_n)}{z_i}-\gamma_2(x-a_n)+\gamma_3,\quad x\in[a_n,b_n],
\end{align*}
with $M=O(n^{1/5}\log n)$, $\gamma_1\leq |w_+'(a_n)-w_-'(b_n)|(b_n-a_n)\leq 2n^{4/5}(b_n-a_n)$ and $|\gamma_2|\leq |w_-'(b_n)|\leq n^{4/5}$. This completes the proof.
\end{proof}

\subsection{Proof of Lemma \ref{lem:mle_exp_ineq}}\label{sec:tech_remain}

\begin{proof}[Proof of Lemma \ref{lem:mle_exp_ineq}]
It is shown in the proof of Theorem 5 of Kim and Samworth \cite{kim2016}, p. 2772, that for $\eta = \eta(\eps) \in (0,1)$ to be defined below and $L \geq L_0(\eta) = L_0(\eps)$,
\begin{equation}\label{eq:dir}
\sup_{g_0 \in \overline{\F}^{0,1}} P_{g_0}^n \Big( \big\{ h(\hat{g}_n,g_0) \geq Ln^{-2/5} \big\} \cap \big\{ \hat{g}_n \in \widetilde{\F}^{1,\eta} \big\}\Big) \leq 2^{15/2} \exp \left( - \frac{L^2 n^{1/5}}{2^{28}} \right),
\end{equation}
so that it remains only to control $\sup_{g_0 \in \overline{\F}^{0,1}} P_{g_0}^n (\hat{g}_n \not\in \widetilde{\F}^{1,\eta} )$. Lemma 6 of Kim and Samworth \cite{kim2016} shows that this quantity is $O(n^{-1})$ as $n\to\infty$; we essentially follow their proof, suitably sharpening the probability bounds in the case $d=1$. Then
\begin{equation}\label{eq:con_ineq_terms}
\begin{split}
\sup_{g_0 \in \overline{\F}^{0,1}} P_{g_0}^n (\hat{g}_n \not\in \widetilde{\F}^{1,\eta} ) & \leq \sup_{g_0 \in \overline{\F}^{0,1}} P_{g_0}^n (|\mu_{\hat{g}_n}| >1 ) + \sup_{g_0 \in \overline{\F}^{0,1}} P_{g_0}^n (\sigma_{\hat{g}_n}^2 > 1+\eta ) \\
& \quad \quad + \sup_{g_0 \in \overline{\F}^{0,1}} P_{g_0}^n (\sigma_{\hat{g}_n}^2 < 1-\eta ).
\end{split}
\end{equation}
By Lemma 13 of \cite{fresen2013}, there exist universal constants $\alpha_0, \beta_0>0$ such that for all $x\in \R$,
\begin{align*}\label{eq:gba}
\sup_{g\in \overline{\F}^{0,1}} g(x) \leq e^{\beta_0 - \alpha_0|x|}.
\end{align*}
It is shown in \cite{kim2016}, p. 2774, that the first term in \eqref{eq:con_ineq_terms} is bounded by $2\alpha_0^{-1} e^{\beta_0 - \alpha_0 \sqrt{n}}$. For the second term, by Remark 2.3 of D\"umbgen et al. \cite{dumbgen2011}, one has $\sigma_{\hat{g}_n}^2 \leq \widetilde{\sigma}_n^2 := n^{-1} \sum_{i=1}^n (Z_i - \bar{Z}_n)^2 \leq n^{-1} \sum_{i=1}^n Z_i^2$, where $\widetilde{\sigma}_n^2$ denotes the sample variance and $\bar{Z}_n = n^{-1} \sum_{i=1}^n Z_i$ the sample mean. Letting $C_0 = C_0(\alpha_0,\beta_0,2)$ denote the constant in Lemma \ref{lem:exponential_inequality} below, we can apply that lemma to bound the second term in \eqref{eq:con_ineq_terms} by
\begin{equation*}
\sup_{g_0 \in \overline{\F}^{0,1}} P_{g_0}^n \left( \frac{1}{n} \sum_{i=1}^n Z_i^2  >1+\eta  \right) \leq \exp ( -\sqrt{\eta/C_0} n^{1/4} )
\end{equation*} 
for $n \geq \max ( 16C_0^2 / \eta, e^2)$.

Consider now the third term in \eqref{eq:con_ineq_terms}. For $\eps>0$, let $\overline{\cal{P}}_\varepsilon^{1/10,1/2}$ denote the class of probability distributions on $\R$ such that $\mu_P = \int xdP(x)$ and $\sigma_P^2 = \int (x-\mu_P)^2 dP(x)$ satisfy $|\mu_P| \leq 1/10$ and $1/2 \leq \sigma_P^2 \leq 3/2$ and
\begin{equation*}
\int |x|^{2+\varepsilon} dP(x) \leq 4e^{\beta_0} \alpha_0^{-3-\varepsilon} \Gamma (3+\varepsilon) =: \tau_\varepsilon,
\end{equation*}
where $\Gamma(t)=\int_0^\infty x^{t-1}e^{-x}dx$. This is exactly the same as the class $\mathcal{P}^{1/10,1/2}$ considered in Lemma 6 of \cite{kim2016}, except that we have replaced the $4^\text{th}$-moment condition with a $(2+\varepsilon)$-moment condition. Following the rest of the proof of Lemma 6 of \cite{kim2016} (noting that $\sup_{g_0 \in \overline{\F}^{0,1}} \int |x|^{2+\varepsilon} g_0(x) dx \leq \tau_\varepsilon/2$ and that uniform integrability of $\{Y_{n_k}^2: k\in \mathbb{N}\}$ in that proof follows from the $(2+\varepsilon)$-moment condition), one can similarly conclude that for some $\eta =\eta\big(\alpha_0,\beta_0,\varepsilon\big) \in (0,1)$,
\begin{equation*}
\sup_{g_0 \in \overline{\F}^{0,1}} P_{g_0}^n (\sigma_{\hat{g}_n}^2 < 1-\eta ) \leq \sup_{g_0 \in \overline{\F}^{0,1}} P_{g_0}^n \left( \P_n \not\in \overline{\mathcal{P}}_\varepsilon^{1/10,1/2} \right),
\end{equation*}
where $\P_n = n^{-1} \sum_{i=1}^n \delta_{Z_i}$ is the empirical measure and $\delta_x$ is the Dirac measure at $x$. This last probability can be bounded by
\begin{align*}
& \sup_{g_0 \in \overline{\F}^{0,1}} P_{g_0}^n (|\bar{Z}_n|>1/10)  + \sup_{g_0 \in \overline{\F}^{0,1}} P_{g_0}^n (|\widetilde{\sigma}_n^2 -1| > 1/2) \\
& \quad \quad + \sup_{g_0 \in \overline{\F}^{0,1}} P_{g_0}^n \left(\int |x|^{2+\varepsilon} (d\P_n(x) - g_0(x)dx) > \tau_\varepsilon/2 \right).
\end{align*}
Using similar arguments to those used previously, the first two terms can be bounded by $Ce^{-c\sqrt{n}}$ and $Ce^{-cn^{1/4}}$, respectively, where the constants depend only on $\alpha_0$ and $\beta_0$. For the last term, using Lemma \ref{lem:exponential_inequality} with $C_0 = C_0(\alpha_0,\beta_0,2+\varepsilon)$ the constant in that lemma,
\begin{equation*}
\sup_{g_0 \in \overline{\F}^{0,1}} P_{g_0}^n\left( \frac{1}{n} \sum_{i=1}^n \left( |Z_i|^{2+\varepsilon} - \E_{g_0} |Z_i|^{2+\varepsilon} \right) > \frac{\tau_\varepsilon}{2} \right) \leq \exp \left(- \left(\frac{\tau_\varepsilon}{2C_0}\right)^{1/(2+\varepsilon)} n^\frac{1}{4+2\varepsilon}\right)
\end{equation*}
for $n \geq \max (4(2+\varepsilon)^{4+2\varepsilon}C_0^2/\tau_\varepsilon^2,e^{2+\varepsilon})$. In conclusion, we have shown that for any $\eps>0$ there exists $\eta=\eta(\eps)\in(0,1)$ such that
\begin{equation*}
\sup_{g_0 \in \overline{\F}^{0,1}} P_{g_0}^n \big(\hat{g}_n \not\in \widetilde{\F}^{1,\eta} \big) \leq C(\eps) \exp \Big(-c(\varepsilon)n^\frac{1}{4+2\varepsilon}\Big)
\end{equation*}
for all $n\geq n_0(\varepsilon)$, where $C(\eps)$, $c(\eps)$ and $n_0(\eps)$ are positive constants. Together with \eqref{eq:dir}, this establishes the result.
\end{proof}

\begin{lemma}\label{lem:exponential_inequality}
Let $Z_1,...,Z_n$ be i.i.d. random variables from a density $f\in \F_{\alpha,\beta}$, where $\alpha>0$ and $\beta\in \R$. Then for any $r \geq 1$, $t \geq r^r$ and $n\geq e^r$,
\begin{equation*}
P_f^n \left( \frac{1}{\sqrt{n}} \left| \sum_{i=1}^n ( |Z_i|^r - \E|Z_i|^r ) \right| \geq C_0(\alpha,\beta,r) t \right) \leq \exp ( -t^{1/r} ).
\end{equation*}
\end{lemma}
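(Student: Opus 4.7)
My plan is to combine a classical truncation with Bernstein's inequality, supplemented by a direct union bound in the very-small-$n$ regime. The density bound $f(z)\le e^{\beta-\alpha|z|}$ immediately yields the exponential tail $P_f(|Z_1|\ge s)\le (2e^\beta/\alpha)e^{-\alpha s}$ for $s\ge 0$, and hence the Gamma-type moment bound $\E_f|Z_1|^q\le (2e^\beta/\alpha^{q+1})\Gamma(q+1)$ for all $q\ge 1$. In particular, $\sigma_r^2:=\mathrm{Var}(|Z_1|^r)\le\E|Z_1|^{2r}<\infty$ depends only on $\alpha,\beta,r$.

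For a truncation threshold $M>0$ to be chosen, I decompose $|Z_i|^r=Y_i+\tilde Y_i$ with $Y_i=|Z_i|^r\1_{|Z_i|\le M^{1/r}}$ and split the centered sum as $S_n=T_n+R_n$ in the obvious way. On the event $\mathcal E:=\{\max_i|Z_i|\le M^{1/r}\}$ the random part of $R_n$ vanishes and $R_n$ reduces to the deterministic shift $-n\E\tilde Y_1$, which a direct Gamma-integral computation bounds by $O(n\alpha^{-r}M^{1-1/r}e^{-\alpha M^{1/r}})$; meanwhile $P(\mathcal E^c)\le n(2e^\beta/\alpha)e^{-\alpha M^{1/r}}$. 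Since the summands of $T_n$ lie in $[-M,M]$ and have variance at most $\sigma_r^2$, the classical Bernstein inequality gives
\[
P(|T_n|\ge s)\le 2\exp\!\Big(\!-\tfrac{s^2}{2n\sigma_r^2+\tfrac{2}{3}Ms}\Big).
\]
I then pick $\alpha M^{1/r}=t^{1/r}+C_1\log n$ with $C_1=C_1(\alpha,\beta)$ large enough that $P(\mathcal E^c)$ and the deterministic shift $n\E\tilde Y_1$ are absorbed by $\tfrac14 e^{-t^{1/r}}$ and $\tfrac12 C_0 t\sqrt n$ respectively. Applied at $s=\tfrac12 C_0 t\sqrt n$, Bernstein splits into two regimes. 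In the sub-Gaussian regime $n\sigma_r^2\ge Ms/3$ the exponent is at least $C_0^2 t^2/(16\sigma_r^2)$, which dominates $t^{1/r}$ for $t\ge r^r$ and $C_0$ large in $\alpha,\beta,r$. In the complementary sub-Weibull regime $n\sigma_r^2\le Ms/3$, the exponent is at least $3C_0\alpha^r t\sqrt n/(8\cdot 2^{r-1}(t+(C_1\log n)^r))$ after invoking $(t^{1/r}+C_1\log n)^r\le 2^{r-1}(t+(C_1\log n)^r)$, and one checks that this again dominates $t^{1/r}$ provided $n\ge n_0(\alpha,\beta,r)$.

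The remaining very-small-$n$ range $n\le n_0(\alpha,\beta,r)$ is handled by a cruder stand-alone union bound: $|S_n|>C_0 t\sqrt n$ forces some $|Z_i|^r\gtrsim C_0 t/\sqrt n$, so $P(|S_n|>C_0 t\sqrt n)\le n(2e^\beta/\alpha)\exp(-\alpha(C_0 t/\sqrt n)^{1/r})$, which is at most $e^{-t^{1/r}}$ once $n\le(\alpha C_0^{1/r}/2)^{2r}$, a constant depending only on $\alpha,r,C_0$. Taking $C_0$ large enough, the two regimes jointly cover every $n\ge e^r$.

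The main obstacle is the bookkeeping in the sub-Weibull regime of the Bernstein step: the $r$-dependent constants arising from Stirling's formula applied to $\Gamma(rk+1)$ and from the subadditivity inequality $(a+b)^r\le 2^{r-1}(a^r+b^r)$ must be absorbed into the final $C_0=C_0(\alpha,\beta,r)$, and it is precisely for this purpose that the hypotheses $t\ge r^r$ and $n\ge e^r$ are imposed.
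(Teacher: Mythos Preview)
Your truncation-plus-Bernstein strategy has a genuine gap in the sub-Weibull regime of the Bernstein step. With your choice $\alpha M^{1/r}=t^{1/r}+C_1\log n$, the exponent you obtain there,
\[
\frac{3C_0\alpha^r t\sqrt n}{8\cdot 2^{r-1}\bigl(t+(C_1\log n)^r\bigr)},
\]
tends to the constant $3C_0\alpha^r\sqrt n/(8\cdot 2^{r-1})$ as $t\to\infty$ with $n$ fixed. Thus for any $n\ge n_0$ and any $t$ with $t^{1/r}$ exceeding a constant multiple of $\sqrt n$, the exponent does \emph{not} dominate $t^{1/r}$; the assertion ``one checks that this again dominates $t^{1/r}$ provided $n\ge n_0(\alpha,\beta,r)$'' is therefore false. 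Your fallback union bound does not rescue this regime either: the inequality $n(2e^\beta/\alpha)\exp(-\alpha(C_0 t/\sqrt n)^{1/r})\le e^{-t^{1/r}}$ requires $\alpha(C_0/\sqrt n)^{1/r}>1$, i.e.\ $n<\alpha^{2r}C_0^2$, so it only covers bounded $n$. Hence for large $n$ and $t^{1/r}\gg\sqrt n$ neither argument applies.

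The underlying reason is that truncating at level $M\asymp t$ makes the effective Bernstein scale grow with $t$, so the linear-regime exponent $s/M\asymp t\sqrt n/t=\sqrt n$ loses precisely the factor of $t$ you need. For $r=1$ one could instead apply the moment form of Bernstein directly (the moments of $|Z_i|$ satisfy the factorial growth condition), but for $r>1$ the moments of $|Z_i|^r$ grow like $(rk)!$, not $k!$, so no fixed-scale Bernstein inequality applies. The paper sidesteps this by working with the Orlicz norm $\|\cdot\|_{\Psi_{1/r}}$ associated to the Young function that equals $e^{x^{1/r}}$ for large $x$: it bounds $\|\,|Z_1|^r\|_{\Psi_{1/r}}$ directly, invokes Theorem~6.21 of Ledoux--Talagrand to obtain $\|S_n\|_{\Psi_{1/r}}\le C(\alpha,\beta,r)\sqrt n$, and concludes via Markov's inequality $P(|S_n|\ge t\|S_n\|_{\Psi_{1/r}})\le 1/\Psi_{1/r}(t)=e^{-t^{1/r}}$ for $t\ge r^r$. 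This handles all $t$ uniformly because the Orlicz norm captures the genuine sub-Weibull tail of the summands rather than approximating it by a bounded truncation.
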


\begin{proof}
For notational convenience, write $\lambda = 1/r \in (0,1]$. Let $x_\lambda = (1/\lambda)^{1/\lambda}$ and define
$$\Psi_\lambda (x)= 
\begin{cases} 
      e^{x^\lambda} & x \geq x_\lambda,\\
      \tau_\lambda x & x <x_\lambda,
   \end{cases}
   $$
where $\tau_\lambda = \Psi_\lambda (x_\lambda) /x_\lambda = (\lambda e)^{1/\lambda}$. This defines a Young function, that is a convex, increasing function $\Psi_\lambda : \R^+ \rightarrow \R^+$ with $\Psi_\lambda(0)=0$. Denote the corresponding Orlicz norm $\|X\|_{\Psi_\lambda} := \inf \{ a>0: \E [\Psi_\lambda(|X|/a)]\leq 1 \}$. Note that the density function $g_\lambda$ of $|Z_1|^{1/\lambda}$ satisfies $g_\lambda(x) = \lambda x^{-(1-\lambda)}\big(f(x^\lambda)+f(-x^\lambda)\big) \1_{\{ x\geq 0\}}\leq \lambda x^{-(1-\lambda)} e^{\beta - \alpha x^\lambda} \1_{\{ x\geq 0\}}$. Then for fixed $a>\alpha^{-1/\lambda}$,
\begin{align*}
\E \Psi_{\lambda} (|Z_1|^{1/\lambda}/a) & \leq \frac{\lambda \tau_\lambda e^\beta}{a} \int_0^{x_\lambda} u^\lambda e^{-\alpha u^\lambda} du + \lambda e^\beta \int_{x_\lambda}^\infty u^{-(1-\lambda)} e^{-(\alpha-a^{-\lambda})u^\lambda} du \\
& = K_0 (a,\alpha,\beta,\lambda) < \infty.
\end{align*}
If $K_0 = K_0(a,\alpha,\beta,\lambda)> 1$, it follows by convexity that $\| |Z_1|^{1/\lambda} \|_{\Psi_{\lambda}} \leq aK_0$.

By Theorem 6.21 of Ledoux and Talagrand \cite{ledoux2011}, there exists a constant $K_\lambda$ such that
\begin{equation*}
\left\| \sum_{i=1}^n ( |Z_i|^\frac{1}{\lambda} - \E |Z_i|^\frac{1}{\lambda}) \right\|_{\Psi_\lambda} \leq K_\lambda \left( \left\| \sum_{i=1}^n (|Z_i|^\frac{1}{\lambda} - \E |Z_i|^\frac{1}{\lambda}) \right\|_1 + \left\| \max_{1\leq i \leq n} \big| |Z_i|^\frac{1}{\lambda} - \E |Z_i|^\frac{1}{\lambda}\big| \right\|_{\Psi_\lambda} \right).
\end{equation*}
The first-term on the right-hand side can be bounded by the $\|\cdot \|_2$-norm of the same quantity, which equals the square root of $n\E(|Z_i|^\frac{1}{\lambda} - \E |Z_i|^\frac{1}{\lambda})^2 \leq n \E|Z_i|^\frac{2}{\lambda}$. For any $\delta \in (0,1]$,
\begin{equation*}
\E |Z_1|^\frac{1}{\delta} = \int_0^\infty x g_{\delta}(x)dx \leq 2\delta e^\beta \int_0^\infty x^\delta e^{-\alpha x^\delta} dx = \frac{2e^\beta}{\alpha^{1+\frac{1}{\delta}}} \int_0^\infty y^\frac{1}{\delta} e^{-y} dy = \frac{2e^\beta}{\alpha^{1+\frac{1}{\delta}}} \Gamma ( 1 + 1/\delta).
\end{equation*}
Since this is finite for any $\delta \in (0,1]$, we can bound the $\|\cdot\|_1$-norm above by $C(\alpha,\beta,\lambda) \sqrt{n}$.

Note that for any random variable $X$, $\|X-\E X\|_{\Psi_\lambda} \leq 2 \|X\|_{\Psi_\lambda}$. Indeed, setting $a = \|X\|_{\Psi_{\lambda}}$, since $\Psi_\lambda$ is convex and increasing,
\begin{equation*}
\E \Psi_\lambda \left( \frac{|X-\E X|}{2a} \right) \leq \frac{1}{2} \E \Psi_\lambda \left( \frac{|X|}{a} \right) + \frac{1}{2} \Psi_\lambda \left( \frac{\E| X|}{a} \right) \leq \E \Psi_\lambda \left( \frac{|X|}{a} \right) \leq 1.
\end{equation*}
Using this and Lemma 2.2.2 of van der Vaart and Wellner \cite{aadwellbook},
\begin{equation*}
\big\| \max_{1\leq i \leq n} \big| |Z_i|^\frac{1}{\lambda} - \E |Z_i|^\frac{1}{\lambda} \big| \big\|_{\Psi_\lambda} \leq K(\Psi_\lambda) \Psi_\lambda^{-1} (n) \max_{1\leq i \leq n} \big\| |Z_i|^\frac{1}{\lambda} \big\|_{\Psi_\lambda} \leq K(\alpha,\beta,\lambda)(\log n)^\frac{1}{\lambda},
\end{equation*}
so that we have shown
\begin{equation*}
\left\| \sum_{i=1}^n (|Z_i|^\frac{1}{\lambda} - \E |Z_i|^\frac{1}{\lambda}) \right\|_{\Psi_\lambda} \leq C(\alpha,\beta,\lambda) \sqrt{n}.
\end{equation*}
By Markov's inequality, for any random variable $X$, $\P( |X| \geq x\|X\|_{\Psi_\lambda}) = \P (\Psi_\lambda (|X|/\|X\|_{\Psi_\lambda}) \geq \Psi_\lambda (x) ) \leq 1/\Psi_\lambda(x)$. Applying this to the above sum completes the proof.
\end{proof}

\section{Additional simulations}\label{sec:extra_sim}

We firstly provide some additional figures for the empirical Bayes posterior of the mode. We consider the $Beta(2,5)$ and $Gamma(2,1)$ distributions and take i.i.d. samples of size ranging from $n=50$ to $n=20000$ in both cases. As in the Gaussian example, we run the Gibbs sampler for $20000$ iterations of which half were considered as burn in and discarded. The resulting marginal posteriors for the mode based on the empirical Bayes posterior are displayed in Figures  \ref{fig: mod_Beta} and \ref{fig: mod_gamma}.

\begin{figure}[h!]
\begin{center}
\includegraphics[width = 0.9\textwidth]{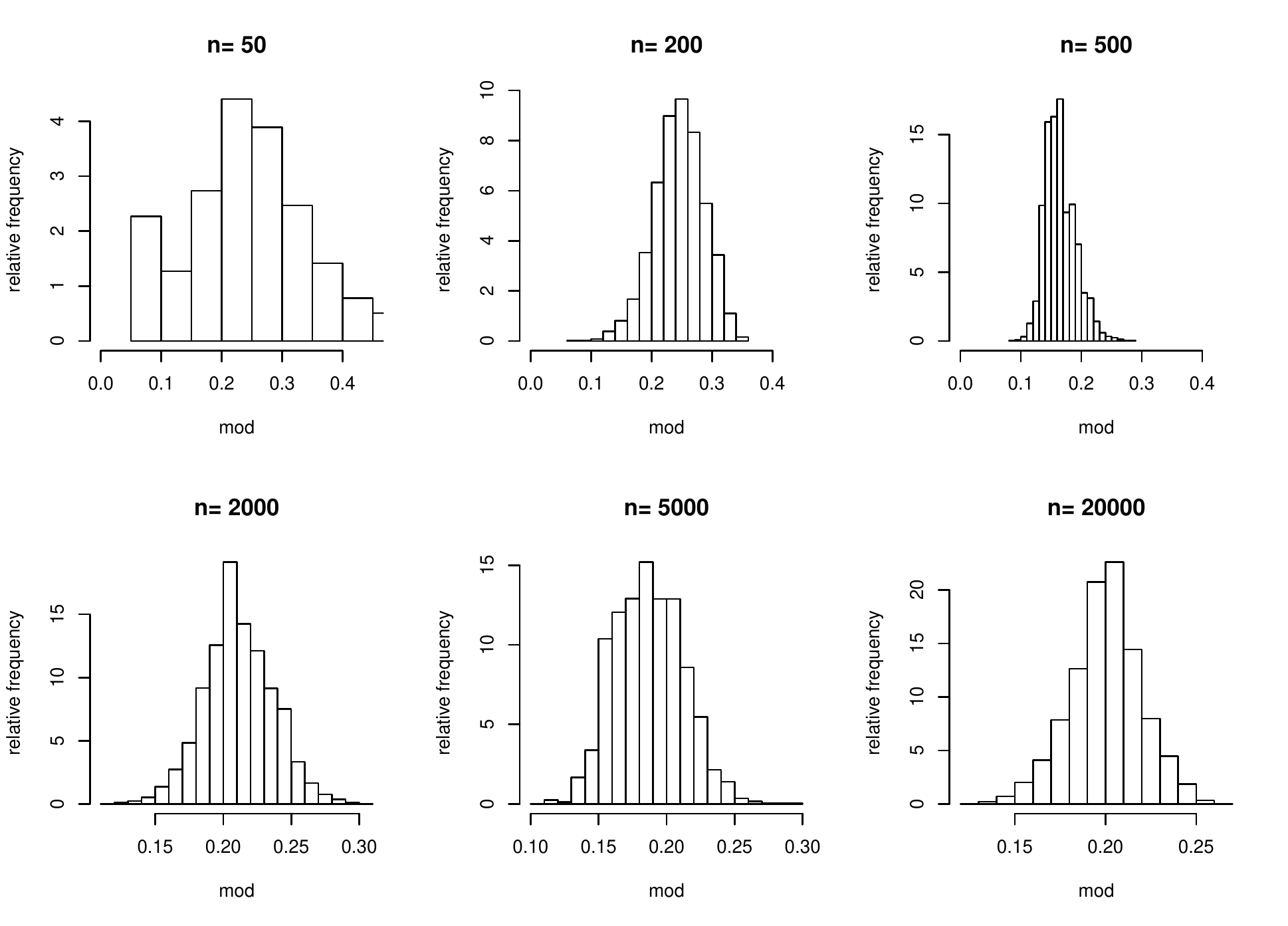} 
\caption{The marginal posterior distribution for the mode based on the empirical Bayes posterior for the $Beta(2,5)$ distribution with increasing sample size from left to right and top to bottom, ranging from $n=50$ to $n=20000$.
}
\label{fig: mod_Beta}
\end{center}
\end{figure}

\begin{figure}[h!]
\begin{center}
\includegraphics[width = 0.9\textwidth]{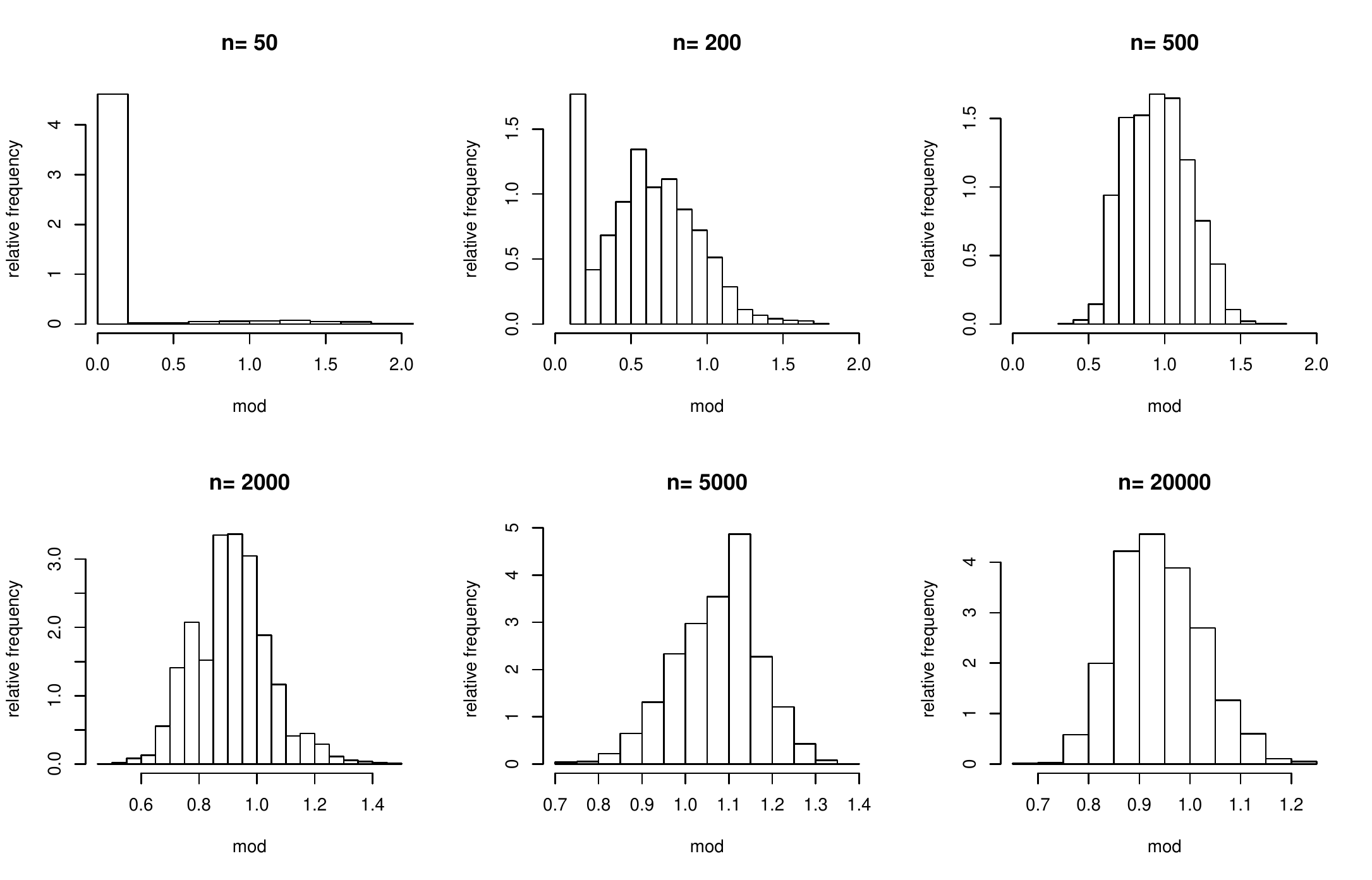} 
\caption{The marginal posterior distribution for the mode based on the empirical Bayes posterior for the $Gamma(2,1)$ distribution with increasing sample sizes from left to right and top to bottom, ranging from $n=50$ to $n=20000$.
}
\label{fig: mod_gamma}
\end{center}
\end{figure}

Next we provide a brief simulation study to demonstrate the (possible) applicability of our procedure for cluster analysis. Here we consider only the simple case where the underlying density is a mixture of two log-concave densities. We have modified our Bayesian procedure to accommodate mixtures of log-concave densities in a straightforward way, again using a Gibbs sampler. We consider various combinations of log-concave densities including two Gaussians, two Laplace distributions, a Laplace and a Gamma distribution and two Beta distributions. We consider two sample sizes, $n=100$ and $n=500$, see Figures \ref{fig: mixture100} and   \ref{fig: mixture500}, respectively. In all pictures we have plotted the posterior mean (solid blue), $95\%$ pointwise credible sets (dashed blue) and the underlying density (solid red). One can see that in all cases our procedure provides reasonable estimators and seemingly reliable uncertainty quantification.\\

\begin{figure}
\subfloat{\includegraphics[width = 3.1in]{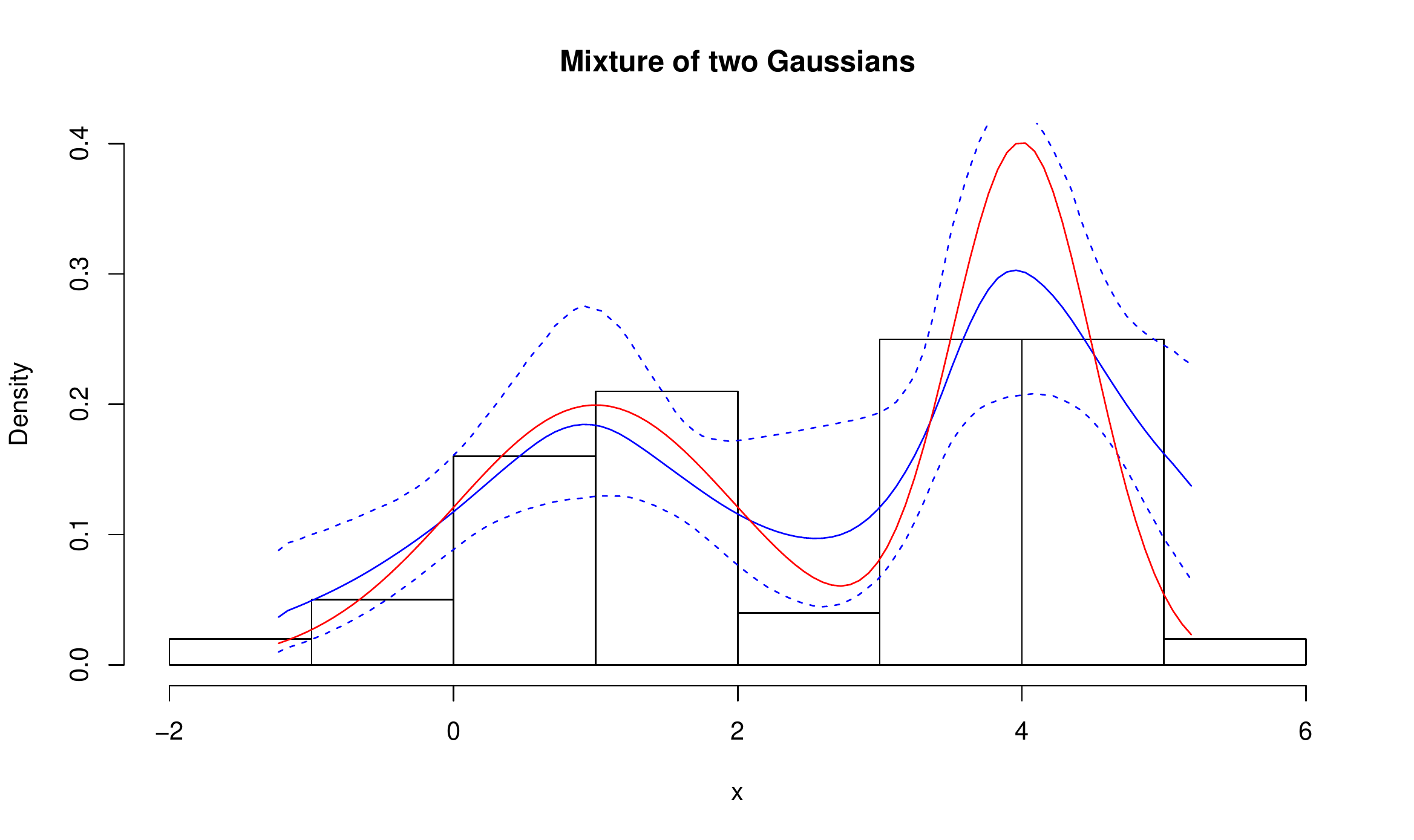}} 
\subfloat{\includegraphics[width = 3.1in]{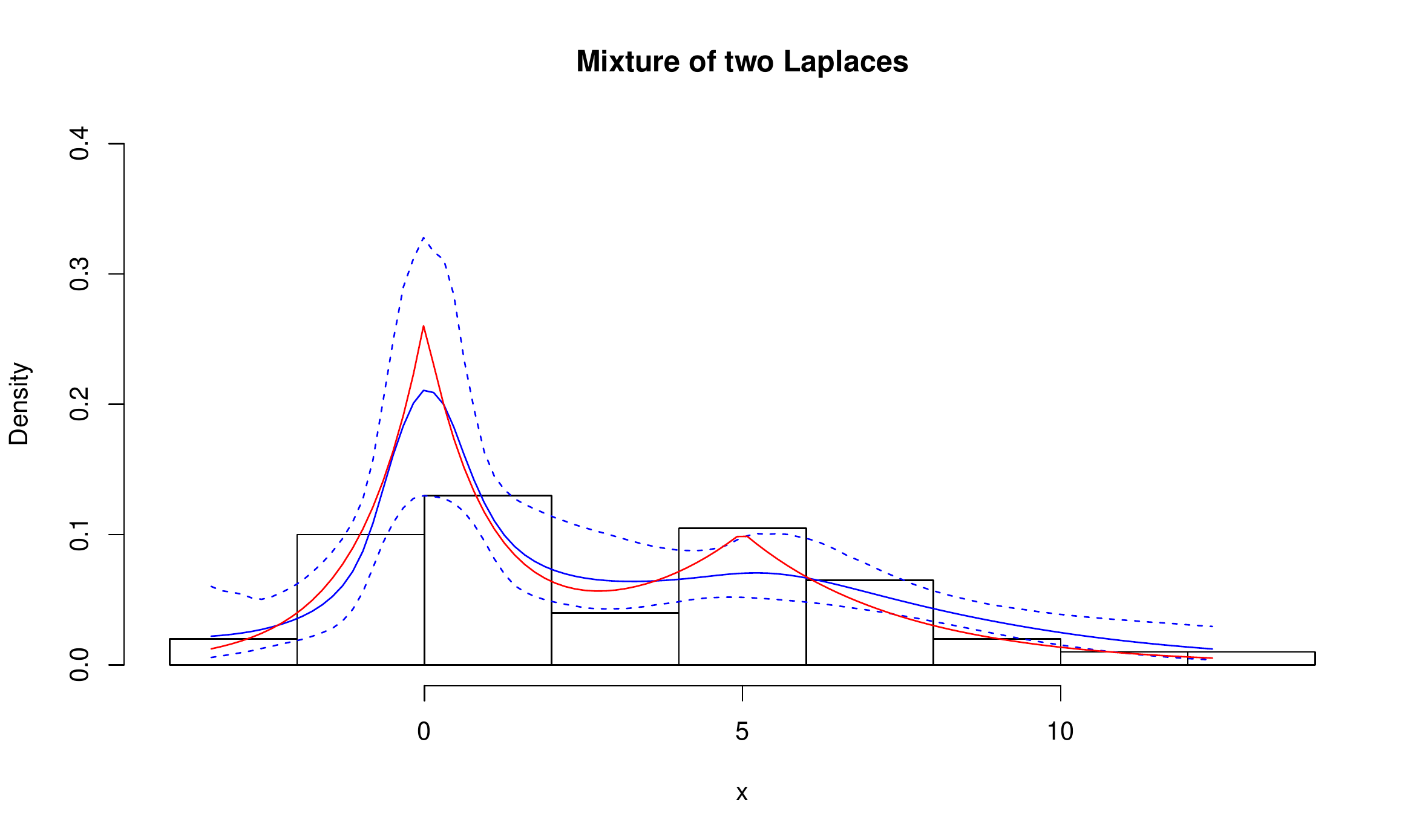}}\\
\subfloat{\includegraphics[width = 3.1in]{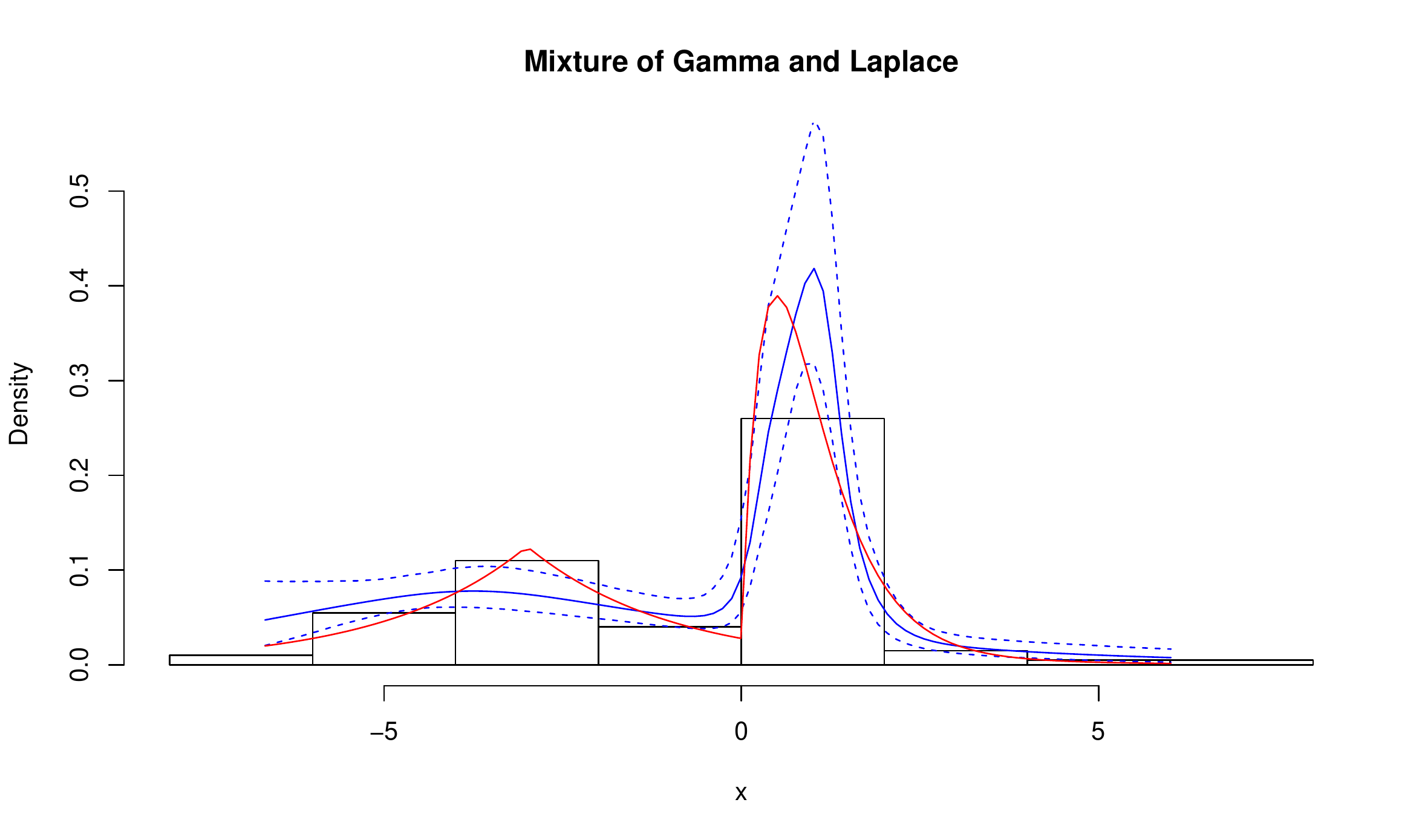}}
\subfloat{\includegraphics[width = 3.1in]{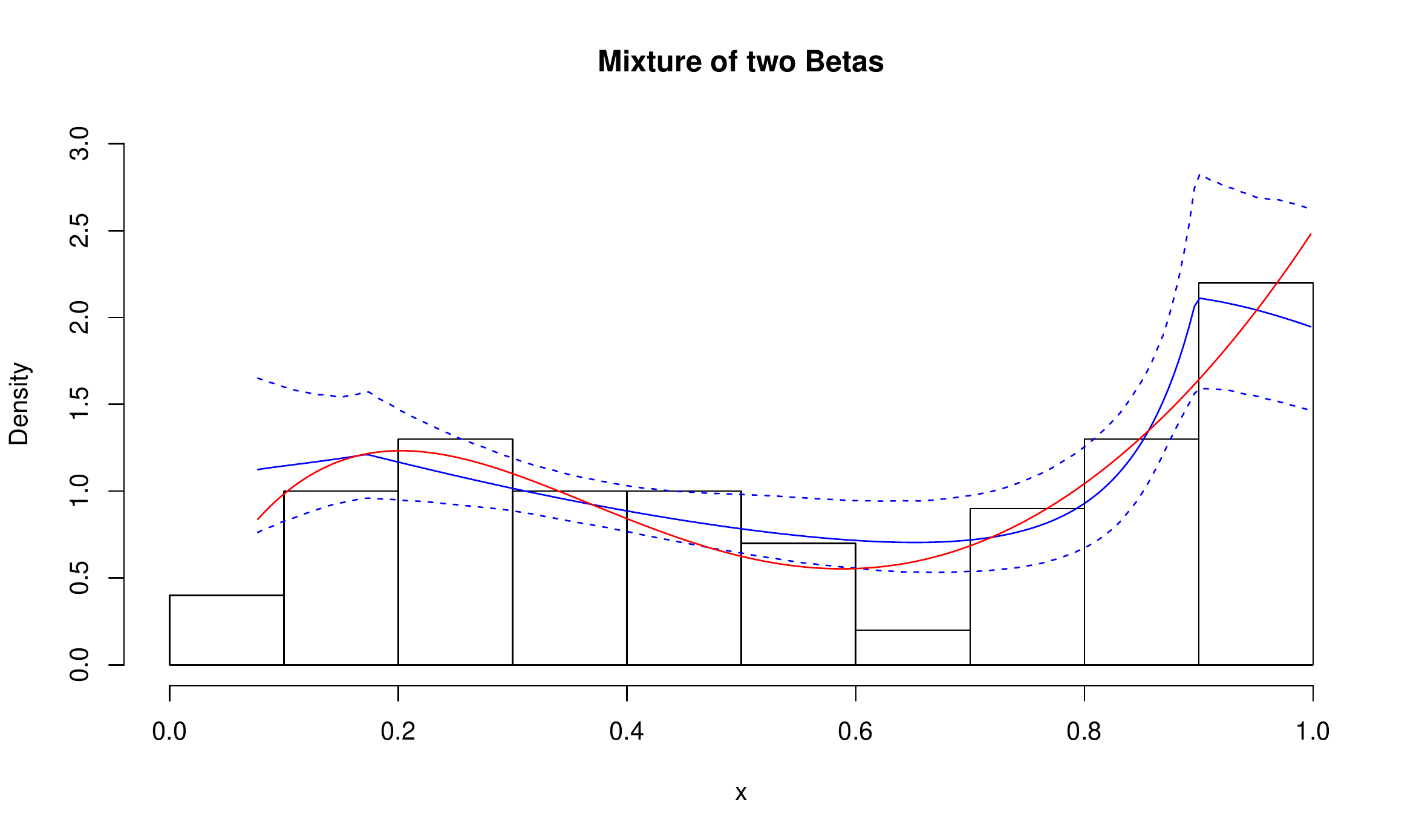}} 
\caption{Empirical Bayes posterior distribution for the mixture of two log-concave densities for sample size $n=100$. Top left corner: $0.5 N(1,1)+0.5N(4,0.5)$, top right corner: $0.5*Laplace(0,1)+0.5*Laplace(5,2.5)$, bottom left corner: $0.5*Gamma(2,2)+0.5* Laplace(-3,2)$, bottom right corner: $0.5*Beta(2,5)+0.5*Beta(5,1)$.}
\label{fig: mixture100}
\end{figure}

\begin{figure}
\subfloat{\includegraphics[width = 3.1in]{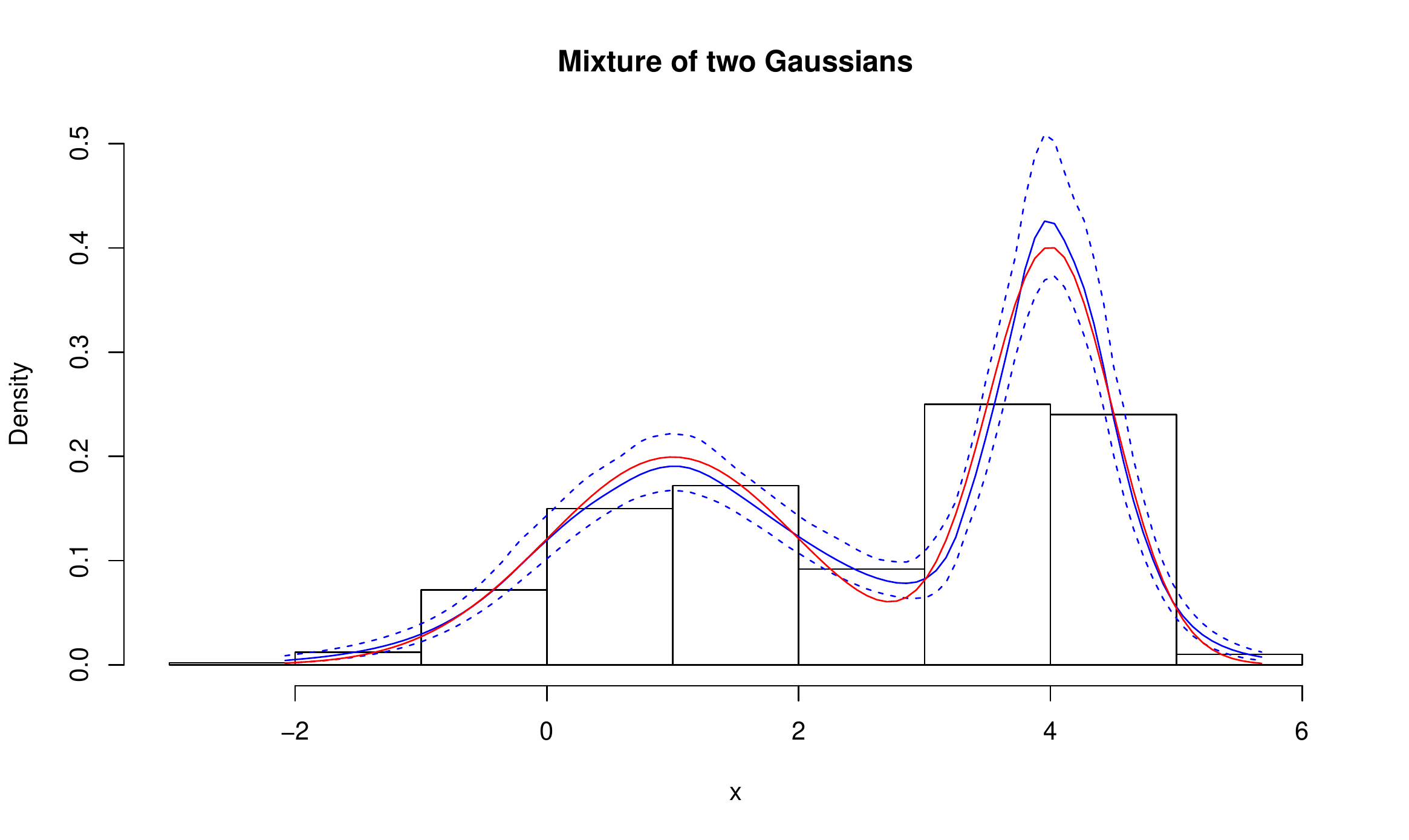}} 
\subfloat{\includegraphics[width = 3.1in]{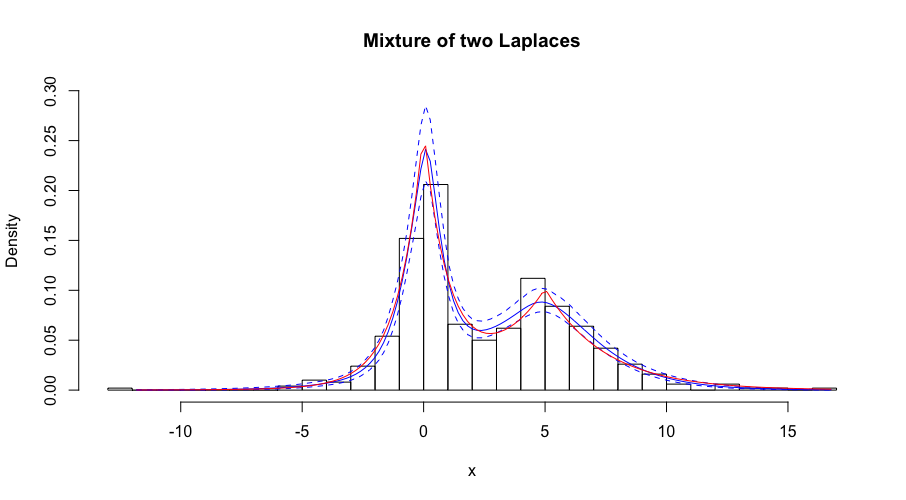}}\\
\subfloat{\includegraphics[width = 3.1in]{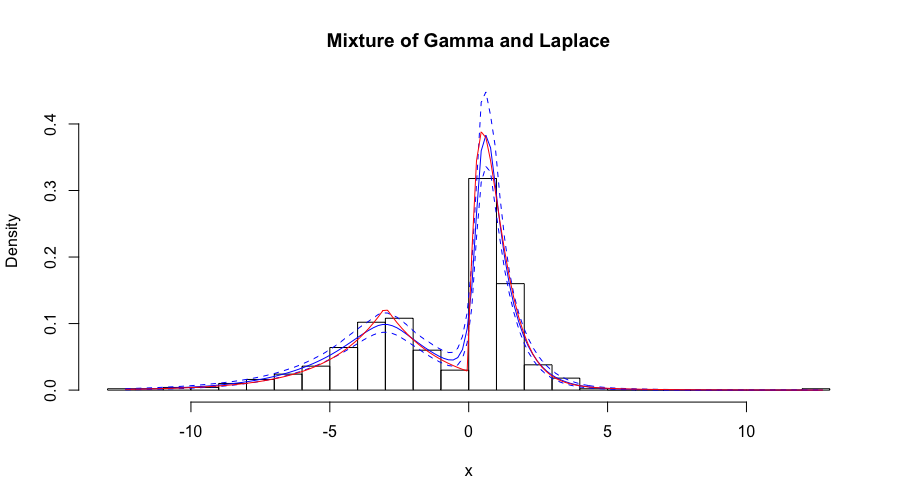}}
\subfloat{\includegraphics[width = 3.1in]{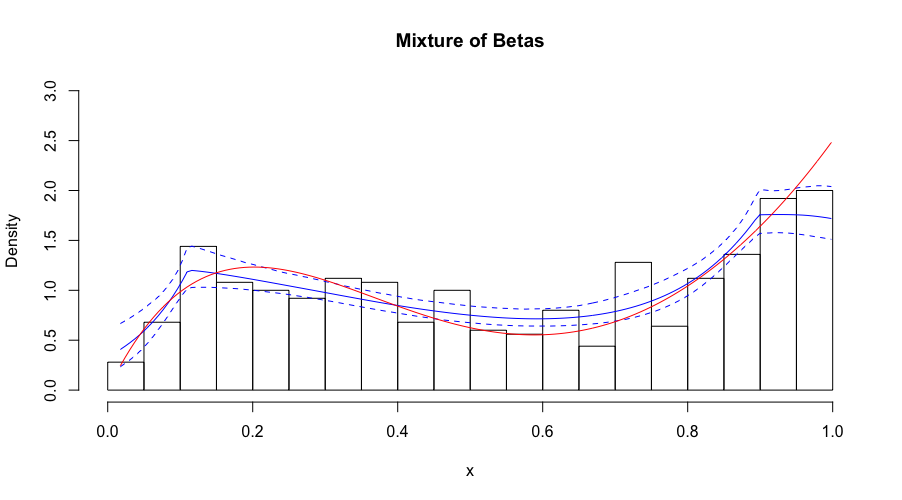}} 
\caption{Empirical Bayes posterior distribution for the mixture of two log-concave densities for sample size $n=500$. Top left corner: $0.5 N(1,1)+0.5N(4,0.5)$, top right corner: $0.5*Laplace(0,1)+0.5*Laplace(5,2.5)$, bottom left corner: $0.5*Gamma(2,2)+0.5* Laplace(-3,2)$, bottom right corner: $0.5*Beta(2,5)+0.5*Beta(5,1)$.}
\label{fig: mixture500}
\end{figure}

\textbf{Acknowledgements:} The authors would like to thank Richard Samworth and Arlene Kim for helpful discussions. The authors would further like to thank the AE and two referees for their helpful suggestions which lead to an improved version of the manuscript.

All three authors received funding from the European Research Council under ERC Grant Agreement 320637 for this research. Ester Mariucci was further supported by the Federal Ministry for Education and Research through the Sponsorship provided by the Alexander von Humboldt Foundation, by the Deutsche Forschungsgemeinschaft (DFG, German Research Foundation) – 314838170,
GRK 2297 MathCoRe, and by Deutsche Forschungsgemeinschaft (DFG) through grant CRC 1294 'Data Assimilation'. Botond Szab\'o also received funding from the Netherlands Organization for Scientific Research (NWO) under Project number: 639.031.654.

\bibliography{refs}{}
\bibliographystyle{acm}

\end{document}